\documentclass[12pt]{amsart}

\usepackage{amsmath,amssymb,amsfonts,mathtools}
\mathtoolsset{showonlyrefs=true}

\usepackage{geometry}
\usepackage{tikz-cd}
\usepackage{hyperref}
\hypersetup{colorlinks=true,linkcolor=blue,citecolor=blue,urlcolor=blue}

\theoremstyle{plain}
\newtheorem{theorem}{Theorem}[section]
\newtheorem{proposition}[theorem]{Proposition}
\newtheorem{lemma}[theorem]{Lemma}
\newtheorem{corollary}[theorem]{Corollary}

\theoremstyle{definition}
\newtheorem{definition}[theorem]{Definition}
\newtheorem{construction}[theorem]{Construction}
\newtheorem{convention}[theorem]{Convention}
\newtheorem{remark}[theorem]{Remark}

\numberwithin{equation}{section}

\def\DD{D\kern-.7em\raise0.4ex\hbox{\char '55}\kern.33em}
\newcommand{\F}{\mathbb F_{2}}
\newcommand{\Z}{\mathbb Z}
\newcommand{\A}{\mathcal A}
\newcommand{\B}{\mathcal B}
\newcommand{\Sq}{\operatorname{Sq}}
\newcommand{\HZ}{H\mathbb Z}
\newcommand{\HB}{H^{*}_{\B}}
\newcommand{\HH}{\mathbb H}
\newcommand{\HHZ}{\mathbb H_{\Z}}
\newcommand{\Ext}{\operatorname{Ext}}
\newcommand{\sExt}{\operatorname{Ext}_{\B}}
\newcommand{\Hom}{\operatorname{Hom}}
\newcommand{\Ker}{\operatorname{ker}}
\newcommand{\Coker}{\operatorname{coker}}
\newcommand{\im}{\operatorname{im}}

\newcommand{\Q}{\operatorname{Q}}

\title[Secondary Ext of the fiber of $\Sq^n$]{On Bruner's Open Questions: Secondary Ext of the Fiber\\ of $\Sq^n$ via Explicit Secondary Adem Tracks}

\author{Ph\'uc V\~o \DD\d{\u a}ng}
\address{Department of Mathematics, FPT University, Quy Nhon AI Campus, An Phu Thinh New Urban Area, Vietnam}
\email{dangphuc150488@gmail.com}
\thanks{ORCID: \url{https://orcid.org/0000-0002-6885-3996}}

\keywords{Adams spectral sequence, secondary Steenrod algebra, secondary cohomology operations, secondary Ext, Adem relations}
\subjclass[2020]{Primary 55T15, 55S20; Secondary 55S10, 55P42}

\begin{document}

\begin{abstract}
Recently, Robert Bruner asked in \cite[Questions~6.1 and~6.2]{Bruner2026} whether the secondary cohomology of the fibers $F_n$, $F_{n\Z}$, and $F$ can be computed directly to determine the $E_3$-terms of their Adams spectral sequences, and whether the two-extension formula for the ordinary Adams $d_2$ can be understood through secondary cohomology. At the prime $2$, let $\B$ be the secondary Steenrod algebra, written in the Baues relation-pair model and transported to Nassau's smaller pair-algebra model through the standard comparison. In this work, we give an unconditional affirmative answer to both questions by constructing explicit secondary mapping-fiber resolutions for
\[
H\xrightarrow{\Sq^n}\Sigma^nH,
\qquad
\HZ\xrightarrow{\Sq^n}\Sigma^nH,
\qquad
\HZ\longrightarrow\prod_{i>0}\Sigma^{2i}H,
\]
where the $i$th component of the last map is represented after mod-$2$ reduction by $\Sq^{2i}$. The construction contains both the triangular syzygies detected by the Milnor indecomposable quotient and the additional primary Adem syzygies needed for a genuine secondary chain complex. In low degree, the first Adem-kernel syzygy in the infinite fiber is
\[
z_4=\Sq^2e_2+\Sq^3e_1,
\qquad
\partial H_4=\Sq^2\Sq^2+\Sq^3\Sq^1,
\qquad
H_4=u[2,2].
\]
More generally, every homogeneous primary syzygy is lifted to the total row containing the integral $\Sq^1$ column and is supplied with the corrected secondary track obtained by tracked Adem reduction in the pair algebra. When the primary mod-$2$ reduction cancels a duplicate lifted monomial, the corresponding $\mu_0$ cancellation track is inserted using $\partial(\mu_0a)=2a$; for instance, the tracks for $z_5$ and $z_8$ contain such terms. These first-row tracks are then completed unconditionally by the Baues--Jibladze recursive secondary-resolution construction: in degree $q+1$ one adjoins a free summand for each homogeneous secondary $q$-cycle, defines $d_{q+1}$ to be the cycle component, and defines $\Gamma_q$ to be the chosen null track. In chain-complex indexing, the coherence equation $d_{q-1}\Gamma_q=\Gamma_{q-1}d_{q+1}$ holds on every new generator by the defining cycle equation, hence throughout the complex by $\B$-linearity. The resulting secondary Ext calculations are
\[
\sExt^{*,*}(\HB F_n,\F)\cong \F\oplus\Sigma^{1,n}\F,
\]
\[
\sExt^{*,*}(\HB F_{n\Z},\F)\cong \F[h_0]\oplus\Sigma^{1,n}\F,
\]
and
\[
\sExt^{*,*}(\HB F,\F)
\cong
\F[h_0]
\oplus\bigoplus_{j>0}\Sigma^{1,2^j}\F
\oplus\bigoplus_{\substack{i>0\\ i\text{ not a power of }2}}\Sigma^{0,2i-1}\F,
\]
where $|h_0|=(1,1)$. This direct computation independently recovers Bruner's $E_3$-terms, fully answering Question~6.1. Finally, we answer Question~6.2 by proving that, for the completed secondary mapping-fiber resolutions, the primary shadow of the first secondary differential is identically the Bruner--Rognes Yoneda composite associated with the corresponding two-extension.
\end{abstract}

\maketitle

\section{Introduction}\label{sec:introduction}

The mod-$2$ Adams spectral sequence replaces stable homotopy questions by homological algebra over the Steenrod algebra. Its $E_2$-term is
$$
E_2^{s,t}(X)\cong\Ext^{s,t}_{\A}(H^*X,\F),
$$
where $\A$ is the Steenrod algebra and $t-s$ is the stem. Adams's original analysis of $\A$ made this method computable in principle, but it also made clear that the $E_2$-term may be much larger than the associated graded group of the stable homotopy of $X$ \cite{Adams1958}. This discrepancy is especially visible for spectra defined as fibers of maps between Eilenberg--Mac Lane spectra. The long exact sequence of homotopy groups may immediately give the groups, while the Adams filtrations of the surviving classes are hidden in the pattern of differentials.

Bruner's paper on the fiber of $\Sq^n$ is designed around this phenomenon. Let $H$ denote the mod-$2$ Eilenberg--Mac Lane spectrum and let $\HZ$ denote the integral Eilenberg--Mac Lane spectrum. Bruner considers the fibers
$$
F_n=\operatorname{fib}\bigl(H\xrightarrow{\Sq^n}\Sigma^nH\bigr),
\qquad
F_{n\Z}=\operatorname{fib}\bigl(\HZ\xrightarrow{\Sq^n}\Sigma^nH\bigr),
$$
and the main fiber
$$
F=\operatorname{fib}\left(\HZ\longrightarrow\prod_{i>0}\Sigma^{2i}H\right),
$$
where the $i$th component is represented after mod-$2$ reduction by $\Sq^{2i}$ \cite[Sections~3--5]{Bruner2026}. For $F$, the homotopy groups are evident from the defining fiber sequence: there is an integral class in degree $0$ and a copy of $\Z/2$ in each positive odd degree. The cohomology of the fiber and the resulting $E_2$-term are substantially less transparent. This is the same general difficulty that appears in the Adams spectral sequence for the connective image-of-$J$ spectrum: Davis computed the relevant cohomology and observed that many differentials are needed to reach the known homotopy groups \cite{Davis1975}, while Bruner--Rognes later gave a structural theorem identifying a specified component of $d_2$ with Yoneda composition by a two-extension \cite[Theorem~1.1]{BrunerRognes2022}. Bruner applies this theorem, restated as \cite[Theorem~2.1]{Bruner2026}, to prove collapse at $E_3$ and to determine the $E_3=E_{\infty}$ terms for the three fibers: \cite[Theorem~3.1]{Bruner2026} for $F_n$, \cite[Theorem~4.1]{Bruner2026} for $F_{n\Z}$, and \cite[Theorem~5.4]{Bruner2026} for $F$.

The final section of Bruner's paper \cite[Section 6]{Bruner2026} turns this calculation into a problem about secondary cohomology. Bruner observes that Ext in the secondary category gives the $E_3$-term directly and asks whether one can compute the secondary cohomology of $F_n$, $F_{n\Z}$, and $F$ and determine the $E_3$-terms from that computation. He also asks whether the Bruner--Rognes two-extension theorem can be proved or improved using secondary cohomology and the secondary Adams spectral sequence \cite[Questions 6.1 and 6.2]{Bruner2026}. These questions are not answered by merely invoking the comparison theorem. Baues and Jibladze identify the $E_3$-term with a secondary derived functor in the appropriate setting \cite[Theorem~7.3]{BauesJibladze2006}; Baues constructs the algebra of secondary cohomology operations \cite{Baues2006}; Nassau gives a smaller pair-algebra model at the prime $2$ \cite{Nassau2012}; and Baues--Frankland formulate related track-algebra structures for the Adams spectral sequence \cite{BauesFrankland2016}. Bruner's open Questions 6.1 and 6.2 \cite{Bruner2026} ask for a direct calculation in this secondary algebra, together with an explanation of the relation between the secondary differential and the two-extension formula. 

This paper unconditionally settles both of Bruner's questions in the affirmative. We first resolve Bruner's open Question~6.1 \cite{Bruner2026} by explicitly computing the secondary Ext groups for the three fibers. This is achieved by constructing the first mapping-fiber matrices, recording the low-degree track data without suppressing the primary syzygies, and then unconditionally completing the displayed data via the Baues--Jibladze recursive secondary resolution. We work in the Baues relation-pair model and use Nassau's pair-algebra model after fixing the standard comparison. The integral part uses the standard secondary resolution of $\HB\HZ$, in which the adjacent composite $\Sq^1\Sq^1$ is killed by the track $H_{1,1}=u[1,1]$. The infinite fiber requires two different kinds of syzygies. The first kind is triangular: for each even nonpower $m$, a finite linear system in $(\A/\A\Sq^1)^m$ expresses $\Sq^m$ modulo lower power-of-two columns and a right multiple of $\Sq^1$. The second kind consists of primary Adem syzygies that do not have a unit coefficient in a new even column. The first example is
$$
\Sq^2e_2+\Sq^3e_1,
$$
whose boundary is the Adem relation $\Sq^2\Sq^2+\Sq^3\Sq^1=0$. These additional syzygies must be present in a secondary chain complex, although their dual Ext classes are not in the kernel of the two-extension differential.

The resulting filtration pattern is separated into three pieces. The $h_0$-tower comes from the integral $\Sq^1$ resolution. The powers $\Sq^{2^j}$ contribute filtration-one classes in bidegrees $(1,2^j)$. The even nonpowers $2i$ contribute filtration-zero classes in bidegrees $(0,2i-1)$, but only through the triangular syzygies whose coefficient on the new column is a unit. The extra Adem syzygies, including $z_4$, map nontrivially under the first connecting homomorphism and are killed by the secondary differential. This is the point at which the completed secondary chain complex and Bruner's kernel-cokernel calculation meet.

To definitively answer Bruner's open Question~6.2 \cite{Bruner2026}, the second part of the paper proves that the primary shadow of the first secondary differential is exactly the Bruner--Rognes two-extension formula. If $X\to Y\xrightarrow{f}Z$ is one of the fiber sequences under consideration, and if $K$, $I$, and $C$ denote the kernel, image, and cokernel of $f^*:H^*Z\to H^*Y$, then the short exact sequences obtained from the long exact cohomology sequence define a two-extension
$$
0\to K\to H^*Z\to H^*Y\to C\to0.
$$
Bruner--Rognes prove that the corresponding component $i^*d_2q^*$ of the ordinary Adams differential is Yoneda composition by this two-extension \cite[Theorem~1.1]{BrunerRognes2022}. For the completed Baues--Jibladze secondary mapping-fiber resolutions constructed below, the primary shadow of the restricted first secondary differential is the same Yoneda product. The all-degree recursive construction fixes the secondary chain complex and its coherence data unconditionally before the comparison is made.

The paper is organized as follows. Section~\ref{sec:secondary-steenrod} fixes the Baues--Nassau algebraic model, the secondary Adem tracks, and the corrected tracked Adem-reduction algorithm in the pair algebra. Section~\ref{sec:explicit-fibers} records the explicit secondary mapping-fiber data, proves the triangular syzygy theorem, provides the all-degree Baues--Jibladze recursive completion, and establishes the crucial augmentation to the secondary cohomology of the fibers. Sections~\ref{sec:single-square}, \ref{sec:integral-fiber}, and \ref{sec:infinite-fiber} give the direct secondary Ext computations for $F_n$, $F_{n\Z}$, and $F$ respectively, independently recovering Bruner's $E_3$-terms. Section~\ref{sec:BR-identification} identifies the Bruner--Rognes formula with the primary shadow of the first secondary differential for the completed secondary mapping-fiber resolutions. Appendix~\ref{sec:computer-verification} records the reproducible low-degree Adem reductions and pair-algebra tracks.

\section{The secondary Steenrod algebra and secondary Adem tracks}\label{sec:secondary-steenrod}

This section fixes the algebraic model used in the calculations.  We recall only the part of the secondary Steenrod algebra needed to build the mapping-fiber complexes and to verify their tracks.  The conventions are those of Baues's relation-pair algebra \cite{Baues2006}, transported through the standard comparison to Nassau's pair-algebra model at the prime $2$ \cite{Nassau2012}.

\begin{convention}
All Steenrod operations are taken at the prime $2$.  A word $(a_1,\ldots,a_r)$ denotes $\Sq^{a_1}\cdots\Sq^{a_r}$.  A word is admissible if
\[
a_i\geq 2a_{i+1}
\]
for all $i$.  The admissible words form the standard additive basis of $\A$.
\end{convention}

For $0<a<2b$, write the primary Adem relation of Adem \cite{Adem1952} as
\begin{equation}\label{eq:adem-relation}
[a,b]
=
\Sq^a\Sq^b+
\sum_{k=0}^{\lfloor a/2\rfloor}
\binom{b-k-1}{a-2k}\Sq^{a+b-k}\Sq^k,
\end{equation}
where $\Sq^0$ is omitted and binomial coefficients are read modulo $2$.  Thus $[a,b]=0$ in $\A$.

\begin{definition}
In the Baues relation-pair model, the secondary Adem track corresponding to \eqref{eq:adem-relation} is denoted $u[a,b]$.  It is an element of the degree-one part of the pair algebra satisfying
\begin{equation}\label{eq:secondary-adem-boundary}
\partial u[a,b]=[a,b].
\end{equation}
For any words $\alpha$ and $\beta$, the contextual track $\alpha u[a,b]\beta$ satisfies
\[
\partial(\alpha u[a,b]\beta)=\alpha [a,b]\beta.
\]
\end{definition}

Fix Nassau's pair algebra
\[
D_1\xrightarrow{\partial}D_0\longrightarrow\A
\]
and the standard comparison from Baues's relation-pair algebra to this model.  We also fix, once and for all, the standard additive section
\[
s:\A\longrightarrow D_0
\]
which sends an admissible Steenrod monomial to its chosen $D_0$ lift and extends additively.  With this choice understood, every primary expression that appears below has a determined lifted expression in $D_0$.  The algebra $D_0$ is generated by Steenrod lifts and elements $Y_{k,l}$, while $D_1$ is generated by a copy of $\A$, elements $\mu_0a$, and elements $U_{k,l}a$.  The boundary includes
\[
\partial(\mu_0a)=2a,
\qquad
\partial(U_{k,l}a)=Y_{k,l}a.
\]
The comparison expresses each Baues relation track $u[a,b]$ in the $U_{k,l}$ and $\mu_0$ coordinates.  The calculations below keep the relation-pair notation $u[a,b]$, but every displayed secondary boundary is interpreted in the fixed pair algebra $D_1\to D_0$.  Thus a cancellation of two identical lifted $D_0$-monomials is not suppressed silently; it is represented by the appropriate $\mu_0$ cancellation track.  In particular, when the primary calculation removes a duplicate pair $a+a$, the track expression contains the summand denoted $\mu_0(a)$, whose boundary is the duplicate contribution $2a$ in Nassau's convention.  The sign of this correction is immaterial at the prime $2$ because $D_0$ is taken with its usual $\Z/4$ coefficients.

We distinguish two lifts to $D_0$.  First, for a formal Steenrod word, define the multiplicative word lift by
\[
\widehat{\Sq^{a_1}\cdots\Sq^{a_r}}_{\mathrm{word}}
=
\widetilde\Sq^{a_1}\cdots\widetilde\Sq^{a_r}\in D_0,
\]
and extend additively to finite formal sums of words.  Second, if
\[
x=\sum_I c_I\Sq^I
\]
is written in admissible normal form, define
\[
\widehat{x}_{\mathrm{adm}}=\sum_I c_I\widetilde\Sq^I\in D_0,
\]
where each $I$ is admissible.  These two representatives are not identified in $D_0$.  The tracked Adem construction below produces a track whose boundary is precisely the difference between the word lift of the original formal expression and the admissible lift of its normal form.

\begin{remark}[Terminology]
Baues--Jibladze formulate the objects used to compute secondary derived functors as secondary chain complexes of secondary free modules \cite[Sections~3--4]{BauesJibladze2006}.  In the special situation of a morphism of secondary modules, we call the resulting secondary chain complex a secondary mapping-fiber complex when its primary matrix is the algebraic fiber matrix and the null-composite tracks are explicitly specified.  Thus this term abbreviates a Baues--Jibladze secondary chain complex with displayed matrices and tracks; it does not replace the track data.  Following Baues--Jibladze, these complexes realize the corresponding algebraic mapping fibers inside the secondary homological algebra used to compute secondary Ext.
\end{remark}

\begin{definition}[Adem reduction order]\label{def:adem-order}
Fix integers $n$ and $r$.  Let $\mathcal W_{n,r}$ be the finite set of $r$-tuples
\[
I=(i_1,\ldots,i_r)
\]
of nonnegative integers with $i_1+\cdots+i_r=n$.  Zeros are allowed only for the purpose of comparing words and are deleted when a Steenrod word is interpreted in $\A$.  We order $\mathcal W_{n,r}$ by right lexicographic order: $I\prec J$ if, for the largest index $q$ with $i_q\neq j_q$, one has $i_q<j_q$.  This is a well-founded strict order because $\mathcal W_{n,r}$ is finite.

Suppose that a word contains an inadmissible adjacent pair $\Sq^a\Sq^b$, with $0<a<2b$.  In the $k$th summand of its Adem replacement, the pair $(a,b)$ is replaced by $(a+b-k,k)$, where
\[
0\leq k\leq \left\lfloor \frac{a}{2}\right\rfloor < b.
\]
If the zero is retained when $k=0$, then the rightmost changed entry decreases from $b$ to $k$.  Hence every replacement word is strictly smaller than the original word in the order just defined.
\end{definition}

\begin{construction}[Tracked Adem reduction]\label{cons:tracked-reduction}
Let $w$ be a Steenrod word.  If $w$ contains no positive adjacent pair $\Sq^a\Sq^b$ with $a<2b$, set
\[
\operatorname{Adm}(w)=w,
\qquad
T(w)=0.
\]
If $w$ is not admissible, choose the leftmost positive adjacent inadmissible pair and write
\[
w=\alpha\Sq^a\Sq^b\beta,
\qquad 0<a<2b.
\]
The primary Adem relation gives
\[
\Sq^a\Sq^b
+
\sum_{k=0}^{\lfloor a/2\rfloor}
\binom{b-k-1}{a-2k}\Sq^{a+b-k}\Sq^k
=0.
\]
Let $w_k=\alpha\Sq^{a+b-k}\Sq^k\beta$, omitting the term if the binomial coefficient is even and interpreting $\Sq^0$ as the identity after the comparison in Definition~\ref{def:adem-order}.  Define recursively
\[
\operatorname{Adm}(w)=\sum_k \operatorname{Adm}(w_k)
\]
and
\[
T(w)=\alpha u[a,b]\beta+
\sum_kT(w_k),
\]
where the sums are over the values of $k$ for which the binomial coefficient is odd.  For a finite sum $x=\sum_iw_i$, set
\[
\operatorname{Adm}(x)=\sum_i\operatorname{Adm}(w_i),
\qquad
T(x)=\sum_iT(w_i).
\]

To obtain the actual pair-algebra track, use the word lifts and admissible lifts fixed above.  Whenever the primary mod-$2$ calculation cancels a duplicate pair of lifted $D_0$-monomials $a+a$, record the cancellation term $\mu_0(a)$, using $\partial(\mu_0(a))=2a$.  Let $C(x)$ denote the finite multiset of all such duplicate cancellations in the chosen reduction tree.  Define the corrected track
\begin{equation}\label{eq:corrected-track-definition}
\widehat T(x)=T(x)+\sum_{a\in C(x)}\mu_0(a).
\end{equation}
When $C(x)$ is empty, we write $T(x)$ and $\widehat T(x)$ interchangeably.
\end{construction}

\begin{theorem}[Termination and corrected boundary of tracked Adem reduction]\label{thm:tracked-reduction}
Construction~\ref{cons:tracked-reduction} terminates for every Steenrod word and for every finite sum of Steenrod words.  Its primary output satisfies
\begin{equation}\label{eq:track-reduction-boundary}
\partial T(x)=x+\operatorname{Adm}(x)
\end{equation}
as a formal relation calculation over $\F$.  Its corrected output satisfies
\begin{equation}\label{eq:corrected-track-reduction-boundary}
\partial\widehat T(x)=
\widehat{x}_{\mathrm{word}}+
\widehat{\operatorname{Adm}(x)}_{\mathrm{adm}}
\end{equation}
in the pair algebra $D_1\to D_0$.  In particular, $\operatorname{Adm}(x)$ is the admissible normal form of $x$ in $\A$.  If this normal form is zero, then $\widehat T(x)$ is a genuine null track from the word lift of $x$ to zero in the chosen pair-algebra model.
\end{theorem}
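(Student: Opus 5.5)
The plan is well-founded induction on the order of Definition~\ref{def:adem-order}, followed by a bookkeeping step in the pair algebra. For termination, fix the degree $n$ and the length $r$ of a word $w$, padding with zeros. By Definition~\ref{def:adem-order}, each Adem step replaces $w$ by finitely many words $w_k$ that are strictly smaller in the right lexicographic order on the finite set $\mathcal W_{n,r}$. Zeros are retained, so the length stays $r$; deleting them only when interpreting in $\A$ does not affect the comparison. Hence the reduction tree of $w$ is finitely branching and has no infinite descending path, so it is finite by K\"onig's lemma. Equivalently, strong induction on $\prec$ shows that $\operatorname{Adm}(w)$ and $T(w)$ are well defined. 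For a finite sum $x$, both are defined termwise.

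For the primary boundary formula \eqref{eq:track-reduction-boundary}, use the same induction. If $w$ is admissible, then $T(w)=0$ and $w+\operatorname{Adm}(w)=2w=0$ over $\F$. Otherwise write $w=\alpha\Sq^a\Sq^b\beta$. By \eqref{eq:secondary-adem-boundary} and the contextual formula, $\partial(\alpha u[a,b]\beta)=\alpha[a,b]\beta=w+\sum_k w_k$. The induction hypothesis gives $\partial T(w_k)=w_k+\operatorname{Adm}(w_k)$. Summing, the $w_k$ cancel in pairs mod $2$, leaving $w+\sum_k\operatorname{Adm}(w_k)=w+\operatorname{Adm}(w)$. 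Additivity then extends the formula to sums. Since each $\operatorname{Adm}(w_k)$ is a sum of admissible words equal to $w$ in $\A$, and admissible words form a basis, $\operatorname{Adm}(x)$ is the admissible normal form of $x$.

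The corrected formula \eqref{eq:corrected-track-reduction-boundary} is the main obstacle. Over $D_0$, which has $\Z/4$ coefficients, the cancellation ``$w_k+w_k=0$'' used above fails: the lifted monomials add to $2\widehat{w_k}$ rather than vanishing. I would repeat the induction in $D_0$, taking $\partial\widehat{u[a,b]}$ to be the word lift of $[a,b]$ under the fixed comparison, namely $\widehat w_{\mathrm{word}}+\sum_k\widehat{(w_k)}_{\mathrm{word}}$ up to sign. At each node the telescoping sum leaves exactly the terms $2\widehat{(w_k)}_{\mathrm{word}}$, together with every coincidence of lifted monomials between different branches or summands of $x$. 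Both kinds of duplicate are precisely what $C(x)$ records. Adding $\mu_0(a)$ for each $a\in C(x)$ and using $\partial\mu_0(a)=2a$ cancels each one; the signs are immaterial because $-2a=2a$ in $\Z/4$. At admissible leaves the word lift equals the admissible lift by the choice of the section $s$, so the remaining terms are $\widehat x_{\mathrm{word}}+\widehat{\operatorname{Adm}(x)}_{\mathrm{adm}}$.

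The delicate point is checking that $C(x)$ is defined so that it captures every doubled monomial and nothing extra. I would make this precise by defining $C$ recursively alongside $T$, and then prove that the defect $\partial T(x)-\widehat x_{\mathrm{word}}-\widehat{\operatorname{Adm}(x)}_{\mathrm{adm}}$ equals $\sum_{a\in C(x)}2a$. Finally, if $\operatorname{Adm}(x)=0$, then $\partial\widehat T(x)=\widehat x_{\mathrm{word}}$, so $\widehat T(x)$ is a null track.
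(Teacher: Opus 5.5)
Your proposal follows essentially the same route as the paper. Termination comes from the strictly decreasing right-lexicographic order on the finite set $\mathcal W_{n,r}$. The primary identity is proved by well-founded induction in which the two copies of $\sum_k w_k$ cancel over $\F$. The pair-algebra identity is then obtained by inserting a $\mu_0$ term for each doubled lifted monomial, using $\partial(\mu_0 a)=2a$.

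Where you differ, you are more careful than the paper's proof, not less. You note finite branching and invoke K\"onig's lemma. You also explicitly track coincidences of lifted monomials between different summands of $x$, which the paper's ``enough to prove it for one word'' reduction glosses over: the $\Z/4$ right-hand side $\widehat{x}_{\mathrm{word}}+\widehat{\operatorname{Adm}(x)}_{\mathrm{adm}}$ is not additive in $x$.
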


\begin{proof}
It is enough to prove the assertion for one word $w$, since the construction is extended linearly over $\F$.  Retain zero slots during the comparison process.  If $w$ has total degree $n$ and length $r$, every word that appears during a single Adem replacement is represented by an element of the finite set $\mathcal W_{n,r}$.  Definition~\ref{def:adem-order} shows that each replacement word is strictly smaller than the word being replaced.  Therefore an infinite sequence of recursive replacements cannot occur.

We prove the primary boundary identity by induction over the same well-founded order.  If $w$ is admissible, then $T(w)=0$ and $\operatorname{Adm}(w)=w$, so \eqref{eq:track-reduction-boundary} is immediate because the calculation is over $\F$.  Suppose now that
\[
w=\alpha\Sq^a\Sq^b\beta,
\qquad 0<a<2b,
\]
and that the identity has already been proved for all replacement words $w_k$.  By the defining boundary formula for secondary Adem tracks,
\[
\partial(\alpha u[a,b]\beta)
=
\alpha\Sq^a\Sq^b\beta
+
\sum_k\alpha\Sq^{a+b-k}\Sq^k\beta
=
w+
\sum_kw_k,
\]
where the sum is again restricted to odd binomial coefficients.  The induction hypothesis gives
\[
\partial\sum_kT(w_k)=\sum_k\bigl(w_k+\operatorname{Adm}(w_k)\bigr).
\]
Adding the two displayed identities cancels the two copies of $\sum_kw_k$ over $\F$ and gives
\[
\partial T(w)=w+
\sum_k\operatorname{Adm}(w_k)=w+\operatorname{Adm}(w).
\]
This proves \eqref{eq:track-reduction-boundary} for words, and linearity gives the formula for finite sums.

It remains only to interpret the same calculation in $D_0$ with the two fixed representatives.  The left hand side begins with the word lift of the original formal expression, whereas the terminal expression is lifted after it has been written in admissible normal form.  The contextual $u[a,b]$ terms account for each Adem replacement between these representatives.  The preceding primary induction also specifies a finite list of cancellations of duplicate lifted monomials.  If a duplicate $a+a$ is removed in the primary calculation, then in $D_0$ the corresponding contribution is represented by $2a$, and Nassau's relation $\partial(\mu_0(a))=2a$ gives exactly the cancellation track for that duplicate.  Adding the terms in \eqref{eq:corrected-track-definition} for all duplicates therefore converts the primary relation-track identity into the equality \eqref{eq:corrected-track-reduction-boundary} in the pair algebra.  Since all remaining words are admissible, $\operatorname{Adm}(x)$ is precisely the usual admissible normal form of $x$ in the Steenrod algebra.
\end{proof}

\begin{proposition}[Corrected first-row cycles]\label{prop:coherence}
Let
\[
d_1=\begin{bmatrix}\widetilde g_\lambda\end{bmatrix}_{\lambda\in\Lambda}:
\bigoplus_{\lambda\in\Lambda}\Sigma^{|g_\lambda|}\B e_\lambda\longrightarrow \B c_0
\]
be any first row occurring in the three fiber constructions, where each $g_\lambda$ is the corresponding primary Steenrod operation.  Let
\[
\rho=\sum_{\lambda\in\Lambda}a_\lambda e_\lambda
\]
be a homogeneous primary row syzygy, so that
\[
\sum_{\lambda\in\Lambda}a_\lambda g_\lambda=0
\]
in $\A$.  Then the corrected leftmost tracked reduction
\[
H_\rho=\widehat T\left(\sum_{\lambda\in\Lambda}a_\lambda g_\lambda\right)
\]
satisfies
\[
\partial H_\rho=d_1(\rho)
\]
in the pair algebra.  Equivalently, $(\rho,H_\rho)$ is a first secondary cycle in the sense of Baues--Jibladze.

More generally, suppose that the row contains the integral column $e_1$ with $g_1=\Sq^1$, and let
\[
\rho'=\sum_{\lambda\neq 1}a_\lambda e_\lambda
\]
be a homogeneous relation in the nonintegral columns modulo the right ideal $\A\Sq^1$.  Choose a representative $b_\rho^{\mathrm{can}}\in\A$ satisfying
\[
\operatorname{Adm}\left(\sum_{\lambda\neq 1}a_\lambda g_\lambda\right)=b_\rho^{\mathrm{can}}\Sq^1.
\]
No uniqueness of this representative in $\A$ is asserted.  Then the total-row lift
\[
\widetilde\rho=\rho'+b_\rho^{\mathrm{can}} e_1
\]
is a homogeneous primary row syzygy, and
\[
H_\rho=\widehat T\left(\sum_{\lambda\neq 1}a_\lambda g_\lambda\right)
\]
satisfies
\[
\partial H_\rho=d_1(\widetilde\rho).
\]
No independence of all possible Adem-reduction paths is required; the fixed leftmost reduction supplies one definite representative of the required track.
\end{proposition}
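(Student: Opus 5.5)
The plan is to deduce both parts of Proposition~\ref{prop:coherence} directly from Theorem~\ref{thm:tracked-reduction}, applied to the element $x=\sum_\lambda a_\lambda g_\lambda$ (respectively $x=\sum_{\lambda\neq1}a_\lambda g_\lambda$). First I will expand each coefficient $a_\lambda$ and each $g_\lambda$ as formal sums of Steenrod words, so that $x$ becomes a finite formal sum of words to which Construction~\ref{cons:tracked-reduction} applies. Next I will identify the word lift $\widehat{x}_{\mathrm{word}}$ with the image $d_1(\rho)$ in the free secondary module $\B c_0$. The reason is that $d_1$ is the matrix of lifts $\widetilde g_\lambda$, and $\rho$ has coefficients lifted through the fixed section $s$. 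Products of lifts in $D_0$ are therefore exactly the multiplicative word lifts, provided the $a_\lambda$ and $g_\lambda$ are taken in admissible form and lifted via $s$. I will state this as a convention: the components of $d_1$ and of $\rho$ are read through $s$ before multiplying.

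For the first part, $\sum_\lambda a_\lambda g_\lambda=0$ in $\A$, so its admissible normal form $\operatorname{Adm}(x)$ vanishes; Theorem~\ref{thm:tracked-reduction} says this normal form is unique. Hence $\widehat{\operatorname{Adm}(x)}_{\mathrm{adm}}=0$, and \eqref{eq:corrected-track-reduction-boundary} gives $\partial H_\rho=\widehat x_{\mathrm{word}}=d_1(\rho)$. This is exactly the Baues--Jibladze condition that $(\rho,H_\rho)$ is a first secondary cycle: a primary element together with a track from its image under $d_1$ to zero.

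For the integral case, I will first check the primary claim. In $\A$ we have $\sum_{\lambda\neq1}a_\lambda g_\lambda=\operatorname{Adm}(x)=b^{\mathrm{can}}_\rho\Sq^1$, so adding $b^{\mathrm{can}}_\rho g_1=b^{\mathrm{can}}_\rho\Sq^1$ gives zero over $\F$. Thus $\widetilde\rho$ is a syzygy, and it is homogeneous because $|b^{\mathrm{can}}_\rho|+1=|x|$. Applying \eqref{eq:corrected-track-reduction-boundary} then gives $\partial H_\rho=\widehat x_{\mathrm{word}}+\widehat{b^{\mathrm{can}}_\rho\Sq^1}_{\mathrm{adm}}$. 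It remains to identify $\widehat{b^{\mathrm{can}}_\rho\Sq^1}_{\mathrm{adm}}$ with the $e_1$-component $s(b^{\mathrm{can}}_\rho)\widetilde\Sq^1$ of $d_1(\widetilde\rho)$. When $b^{\mathrm{can}}_\rho$ is written as a sum of admissible words $\Sq^I$ whose last entry is at least $2$, each $\Sq^I\Sq^1$ is admissible. Its admissible lift is then $\widetilde\Sq^I\widetilde\Sq^1$, since the section $s$ is chosen multiplicatively on admissible monomials.

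I expect this last identification to be the main obstacle. The representative $b^{\mathrm{can}}_\rho$ is not unique, and words of $b^{\mathrm{can}}_\rho$ ending in $\Sq^1$ would give $\Sq^1\Sq^1$ terms. To handle this, I will choose $b^{\mathrm{can}}_\rho$ canonically by reading it off from the admissible monomials of $\operatorname{Adm}(x)$ that end in $\Sq^1$, stripping that final $\Sq^1$. Every admissible monomial in $\operatorname{Adm}(x)$ must end in $\Sq^1$, because $x\in\A\Sq^1$ and admissible monomials ending in $\Sq^1$ span $\A\Sq^1$. With this choice, $b^{\mathrm{can}}_\rho$ has no word ending in $\Sq^1$, so the identification holds on the nose. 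Any $\mu_0$ corrections are already absorbed in $\widehat T$ by Theorem~\ref{thm:tracked-reduction}, and no path-independence of the Adem reduction is needed.
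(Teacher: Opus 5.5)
Your proposal is correct and is essentially the paper's proof: both apply the corrected boundary identity \eqref{eq:corrected-track-reduction-boundary} of Theorem~\ref{thm:tracked-reduction} to $x=\sum a_\lambda g_\lambda$ (resp.\ $x=\sum_{\lambda\neq1}a_\lambda g_\lambda$), use $\operatorname{Adm}(x)=0$ in the first case, and identify the word lift together with the admissible lift of $b_\rho^{\mathrm{can}}\Sq^1$ with the lifted row composite $d_1(\rho)$, resp.\ $d_1(\widetilde\rho)$, in $D_0$. Your explicit choice of $b_\rho^{\mathrm{can}}$ sharpens a step the paper only asserts. You strip the final $\Sq^1$ from the admissible monomials of $\operatorname{Adm}(x)$, so that $s(b_\rho^{\mathrm{can}})\widetilde\Sq^1=\widehat{\operatorname{Adm}(x)}_{\mathrm{adm}}$ holds on the nose. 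This matters because $H_\rho$ does not depend on the representative, whereas $s(b)\widetilde\Sq^1$ changes by $s(k)\widetilde\Sq^1$ for $k\in\A\Sq^1$, which need not vanish in $D_0$.
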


\begin{proof}
For the first assertion, the corrected boundary identity \eqref{eq:corrected-track-reduction-boundary} gives
\[
\partial H_\rho=
\widehat{\sum_{\lambda}a_\lambda g_\lambda}_{\mathrm{word}}+
\widehat{\operatorname{Adm}\left(\sum_{\lambda}a_\lambda g_\lambda\right)}_{\mathrm{adm}}.
\]
Since $\rho$ is a primary row syzygy, the admissible normal form of the displayed sum is zero in $\A$.  After the duplicate lifted monomials have been represented by the required $\mu_0$ terms, the right hand side is precisely the lifted row composite $d_1(\rho)$ in $D_0$.  This proves $\partial H_\rho=d_1(\rho)$.

For the relative statement with an integral column, the same identity gives
\[
\partial H_\rho=
\widehat{\sum_{\lambda\neq1}a_\lambda g_\lambda}_{\mathrm{word}}+
\widehat{b_\rho^{\mathrm{can}}\Sq^1}_{\mathrm{adm}},
\]
with all terms read in the fixed $D_0$ representatives.  This is exactly $d_1(\rho'+b_\rho^{\mathrm{can}} e_1)$.  For a first secondary cycle there is no lower coherence equation.  In higher degrees the recursive construction in Construction~\ref{cons:recursive-completion} uses only pairs that already satisfy the Baues--Jibladze cycle equation.  Thus the corrected Adem reductions provide the displayed first secondary cycles, while all subsequent coherence is obtained by the cycle-killing recursion rather than by a confluence assertion for arbitrary secondary Adem reduction paths.
\end{proof}

\section{Explicit secondary mapping fibers and syzygies}\label{sec:explicit-fibers}

We now record the secondary mapping-fiber data used throughout the paper.  In the terminology of the preceding section, these are the initial data for secondary mapping-fiber complexes: the primary matrices are written explicitly, and each first-row primary syzygy used below is equipped with a specified secondary track produced by Construction~\ref{cons:tracked-reduction}.  The construction separates the triangular relations that survive in the kernel of the two-extension differential from the additional Adem syzygies that are necessary for a secondary chain complex but do not survive in secondary Ext.

Let $\HH=\HB H$.  This is the free rank-one secondary $\B$-module generated by the identity cohomology class of $H$.  Let $\HHZ=\HB\HZ$.  Its primary shadow is
\[
\pi_0\HHZ\cong H^*\HZ\cong \A/\A\Sq^1.
\]
The standard secondary resolution of $\HHZ$ begins
\[
\cdots\xleftarrow{\widetilde{\Sq}^{1}}\Sigma^2\B
\xleftarrow{\widetilde{\Sq}^{1}}\Sigma\B
\xleftarrow{\widetilde{\Sq}^{1}}\B
\longleftarrow\HHZ\longleftarrow0,
\]
and the adjacent composite is witnessed by the secondary track lifting $\Sq^1\Sq^1=0$.  We choose
\[
H_{1,1}=u[1,1],
\qquad
\partial H_{1,1}=\Sq^1\Sq^1.
\]
In Nassau coordinates this track is represented by $\mu_0\Sq^2+U_{-1,0}$.  The equality
\[
\Sq^1H_{1,1}=H_{1,1}\Sq^1
\]
is the Mac Lane coherence relation for the overlapping word $\Sq^1\Sq^1\Sq^1$ in the Baues--Nassau pair algebra.  Consequently the standard $\Sq^1$ tower is a secondary chain complex, and the recursive completion below can be applied relative to it.

For the infinite fiber set
\[
P=\Sigma\B e_1\oplus\bigoplus_{i>0}\Sigma^{2i}\B e_{2i},
\]
and define the first differential by the row matrix
\begin{equation}\label{eq:low-degree-row}
d_1=\begin{bmatrix}
\widetilde{\Sq}^{1}&\widetilde{\Sq}^{2}&\widetilde{\Sq}^{4}&\widetilde{\Sq}^{6}&\widetilde{\Sq}^{8}&\cdots
\end{bmatrix}.
\end{equation}
Thus $e_1$ records the integral relation $\Sq^1$ and $e_{2i}$ records the $i$th component $\Sq^{2i}$ of the map to the product.  The module appearing in the Bruner two-extension omits the integral column.  We write
\begin{equation}\label{eq:MF-nu}
M_F=\bigoplus_{i>0}\Sigma^{2i}\A e_{2i},
\qquad
\nu:M_F\longrightarrow\A/\A\Sq^1,
\qquad
e_{2i}\longmapsto\overline{\Sq^{2i}},
\end{equation}
and set $K_F=\Ker(\nu)$.  If $\rho=\sum_i a_i e_{2i}\in K_F$, then the equality $\sum_i a_i\Sq^{2i}=0$ in $\A/\A\Sq^1$ means that the admissible normal form of $\sum_i a_i\Sq^{2i}$ lies in the right ideal $\A\Sq^1$.  Choose $b_\rho\in\A$ with
\[
\operatorname{Adm}\left(\sum_i a_i\Sq^{2i}\right)=b_\rho\Sq^1.
\]
The total-row lift of $\rho$ is
\[
\widetilde\rho=\rho+b_\rho e_1\in\Ker(d_1),
\]
and its secondary null-homotopy is the corrected pair-algebra track
\[
H_\rho=\widehat T\left(\sum_i a_i\Sq^{2i}\right).
\]
Proposition~\ref{prop:coherence} gives $\partial H_\rho=d_1(\widetilde\rho)$.

\begin{definition}
For an even positive integer $m$, write $\overline{x}$ for the image of $x\in\A$ in $\A/\A\Sq^1$.  A triangular relation datum for $m$ is a finite expression
\begin{equation}\label{eq:relation-datum}
\overline{\Sq^m}=
\sum_{\ell}\overline{a_{m,\ell}\Sq^{2^{j_{m,\ell}}}}
\end{equation}
with $0<2^{j_{m,\ell}}<m$.  Given such a datum, choose a representative $b_m^{\mathrm{can}}\in\A^{m-1}$ satisfying
\begin{equation}\label{eq:bm-definition}
\operatorname{Adm}\left(\Sq^m+
\sum_{\ell}a_{m,\ell}\Sq^{2^{j_{m,\ell}}}\right)=b_m^{\mathrm{can}}\Sq^1.
\end{equation}
No uniqueness of $b_m^{\mathrm{can}}$ in $\A$ is asserted, because right multiplication by $\Sq^1$ has nontrivial kernel.  The associated even-column primary syzygy, total-row lift, and secondary track are
\begin{equation}\label{eq:rm-Hm}
r_m=e_m+
\sum_{\ell}a_{m,\ell}e_{2^{j_{m,\ell}}}\in K_F,
\qquad
\widetilde r_m=r_m+b_m^{\mathrm{can}}e_1,
\end{equation}
\[
H_m=\widehat T\left(\Sq^m+
\sum_{\ell}a_{m,\ell}\Sq^{2^{j_{m,\ell}}}\right).
\]
\end{definition}

By Proposition~\ref{prop:coherence}, the datum \eqref{eq:relation-datum} implies
\begin{equation}\label{eq:boundary-Hm}
\partial H_m=d_1(\widetilde r_m).
\end{equation}
Thus $r_m$ is a primary syzygy in the module $K_F$, while $\widetilde r_m$ is the corresponding total-row syzygy equipped with the secondary null-homotopy required in the secondary chain complex.

\begin{proposition}\label{prop:powers}
The class of $\Sq^m$ in the indecomposable quotient $\Q(\A)=\A_{>0}/\A_{>0}^2$ is nonzero if and only if $m$ is a power of $2$.  Equivalently, among the one-square operations, the indecomposables are exactly $\Sq^{2^j}$.
\end{proposition}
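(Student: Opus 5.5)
We argue the two directions separately, using only the Adem relations \eqref{eq:adem-relation} and the Cartan formula.

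\emph{Decomposability when $m$ is not a power of $2$.} Write $m=2^k+r$ with $0<r<2^k$, so $2^k$ is the largest power of $2$ below $m$. We apply the Adem relation \eqref{eq:adem-relation} with $a=r$ and $b=2^k$; this is legitimate because $0<r<2^{k+1}=2b$. Then
\[
\Sq^r\Sq^{2^k}=\sum_{j=0}^{\lfloor r/2\rfloor}\binom{2^k-j-1}{r-2j}\Sq^{m-j}\Sq^j .
\]
The $j=0$ term has coefficient $\binom{2^k-1}{r}$. Since $2^k-1$ has all binary digits $1$ below $2^k$, Lucas' theorem gives $\binom{2^k-1}{r}\equiv1\pmod 2$. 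Every term with $j>0$ is a product of two positive-degree operations and so lies in $\A_{>0}^2$, as does the left-hand side. Therefore $\Sq^m\in\A_{>0}^2$, and its class in $\Q(\A)$ vanishes. The only point to check is the Lucas computation, and we expect no obstacle here.

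\emph{Indecomposability of $\Sq^{2^j}$.} We use the standard detection argument on $H^*(\mathbb{R}P^\infty)=\F[x]$. The Cartan formula gives $\Sq^i x^{n}=\binom{n}{i}x^{n+i}$. Consider $x^{2^j}$. For $0<i<2^j$ we have $\Sq^i x^{2^j}=\binom{2^j}{i}x^{2^j+i}=0$, since $\binom{2^j}{i}$ is even in this range by Lucas' theorem. On the other hand, $\Sq^{2^j}x^{2^j}=x^{2^{j+1}}\neq0$.

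Now suppose $\Sq^{2^j}=\sum_\ell \alpha_\ell\beta_\ell$ with $\alpha_\ell,\beta_\ell$ of positive degree. We may take each $\beta_\ell$ homogeneous of degree strictly less than $2^j$. Every such $\beta_\ell$ is a sum of admissible monomials whose rightmost factor is $\Sq^i$ with $0<i<2^j$. Each of these factors annihilates $x^{2^j}$, so $\beta_\ell x^{2^j}=0$ for all $\ell$. Applying both sides of the assumed decomposition to $x^{2^j}$ then gives $x^{2^{j+1}}=0$, a contradiction. Hence the class of $\Sq^{2^j}$ in $\Q(\A)$ is nonzero.

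The "equivalently" clause is a restatement of the two directions just proved. We expect the main care to lie in the reduction that the right factors $\beta_\ell$ have degree below $2^j$ and end in some $\Sq^i$ with $0<i<2^j$, which follows from the admissible basis in the Convention of Section~\ref{sec:secondary-steenrod}. No secondary structure is needed for this proposition.
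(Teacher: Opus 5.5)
Your proof is correct, but it takes a different route from the paper.

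\textbf{The paper's route.} The paper argues by duality. It identifies $\Q(\A)^{\vee}$ with the primitives $P(\A_*)$ of Milnor's dual $\F[\xi_1,\xi_2,\ldots]$, and uses the fact that these primitives are spanned by the Frobenius powers $\xi_1^{2^j}$. Since $\Sq^m$ is the Milnor basis element dual to $\xi_1^m$, its class in $\Q(\A)$ is nonzero exactly when $\xi_1^m$ is primitive, i.e.\ when $m$ is a power of $2$.

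\textbf{Your route.} You give the classical elementary argument in two halves.
\begin{itemize}
\item For decomposability, you write $m=2^k+r$ with $0<r<2^k$ and use the Adem relation for $\Sq^r\Sq^{2^k}$. Its leading coefficient $\binom{2^k-1}{r}$ is odd by Lucas, which expresses $\Sq^m$ as a sum of products of positive-degree squares.
\item For indecomposability, you detect $\Sq^{2^j}$ on $x^{2^j}\in H^*(\mathbb{R}P^\infty)$. This class is annihilated by every operation of positive degree below $2^j$.
\end{itemize}
Both steps check out, including the degree bound on the right factors $\beta_\ell$.

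\textbf{What each approach buys.} The paper's argument settles both directions in one stroke and in fact computes all of $\Q(\A)$: it is one-dimensional in degrees $2^j$ and zero otherwise. However, it rests on Milnor's structure theorem and on the computation of $P(\A_*)$, which the paper invokes rather than proves. Your argument needs only the Adem relations and the Cartan formula, at the cost of a test module for the indecomposability half. Combined with the fact that the $\Sq^i$ generate $\A$, your first half already shows that the $\Sq^{2^j}$ generate $\A$. That generation statement is exactly what the proof of Theorem~\ref{thm:complete-Hm} later draws from this proposition.
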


\begin{proof}
Let $\Q(\A)=\A_{>0}/\A_{>0}^2$.  Dualizing gives
\[
\Q(\A)^m{}^{\vee}\cong P(\A_*)_m,
\]
where $P(\A_*)_m$ is the subspace of primitives in degree $m$ in the dual Steenrod algebra.  Milnor identifies
\[
\A_*\cong\F[\xi_1,\xi_2,\ldots],
\qquad |\xi_j|=2^j-1,
\]
with coproduct
\[
\psi(\xi_n)=\sum_{i=0}^n\xi_{n-i}^{2^i}\otimes\xi_i,
\qquad \xi_0=1.
\]
Milnor's coproduct formula implies that the primitive subspace paired with the classical one-square operations is generated in degrees $2^j$ by the Frobenius powers $\xi_1^{2^j}$, and that there is no such primitive in a non-power-of-two degree.  The class of $\Sq^{2^j}$ in $\Q(\A)$ pairs nontrivially with $\xi_1^{2^j}$.  The one-square operation $\Sq^m$ pairs with the monomial $\xi_1^m$ and vanishes on the other Milnor monomials of degree $m$.  Hence the image of $\Sq^m$ in $\Q(\A)$ is nonzero exactly when $\xi_1^m$ represents one of these primitive Frobenius powers, namely exactly when $m=2^j$.  Therefore the indecomposable quotient of $\A$ is generated, among one-square operations, precisely by the classes of $\Sq^{2^j}$.
\end{proof}

\begin{theorem}[Triangular relations for one-square columns]\label{thm:complete-Hm}
For every even integer $m>0$, the following finite recursive procedure terminates.  Work in the finite-dimensional vector space $(\A/\A\Sq^1)^m$ with basis represented by admissible words of degree $m$ not ending in $\Sq^1$.  Form all columns
\[
\overline{\alpha\Sq^{2^j}},
\qquad
j>0,
\qquad
2^j<m,
\qquad
|\alpha|=m-2^j,
\]
where $\alpha$ runs through the admissible basis of $\A^{m-2^j}$.  Reduce each product by Construction~\ref{cons:tracked-reduction}, discard the terms ending in $\Sq^1$, and solve over $\F$ the finite system
\begin{equation}\label{eq:finite-system-m}
\overline{\Sq^m}+\sum_{\alpha,j}c_{\alpha,j}\overline{\alpha\Sq^{2^j}}=0
\qquad\text{in }(\A/\A\Sq^1)^m.
\end{equation}
If $m$ is a power of $2$, the system has no solution.  If $m$ is not a power of $2$, the system has a solution.  For any solution, define
\[
y_m=\Sq^m+\sum_{\alpha,j}c_{\alpha,j}\alpha\Sq^{2^j}.
\]
Then $\operatorname{Adm}(y_m)$ lies in the right ideal $\A\Sq^1$.  Choose a representative $b_m^{\mathrm{can}}\in\A^{m-1}$ satisfying
\[
\operatorname{Adm}(y_m)=b_m^{\mathrm{can}}\Sq^1.
\]
No uniqueness of $b_m^{\mathrm{can}}$ in $\A$ is asserted.  The formulas
\begin{equation}\label{eq:complete-rm-Hm}
r_m=e_m+
\sum_{\alpha,j}c_{\alpha,j}\alpha e_{2^j},
\qquad
\widetilde r_m=r_m+b_m^{\mathrm{can}}e_1,
\qquad
H_m=\widehat T(y_m)
\end{equation}
then satisfy
\begin{equation}\label{eq:complete-boundary-Hm}
\partial H_m=d_1(\widetilde r_m).
\end{equation}
Thus every even nonpower $m$ has an explicitly constructed triangular secondary syzygy track $H_m$, while every even power of $2$ gives no triangular lower-column relation of this form.
\end{theorem}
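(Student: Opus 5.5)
The plan is to treat the statement as three separate claims: termination of the finite procedure, the solvability dichotomy for \eqref{eq:finite-system-m}, and the boundary identity \eqref{eq:complete-boundary-Hm}. Termination is immediate. The space $(\A/\A\Sq^1)^m$ is finite-dimensional, and there are finitely many pairs $(\alpha,j)$. Each product $\alpha\Sq^{2^j}$ reduces in finitely many steps by Theorem~\ref{thm:tracked-reduction}. Solving \eqref{eq:finite-system-m} is then Gaussian elimination over $\F$.

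First I would fix the linear algebra of the quotient. The claim is that $\A\Sq^1$ has additive basis the admissible words ending in $\Sq^1$. For an admissible word $\Sq^I$, the product $\Sq^I\Sq^1$ is either again admissible ending in $\Sq^1$ (if the last entry of $I$ is at least $2$) or zero (if $I$ ends in $1$, since $\Sq^1\Sq^1=0$). Conversely, every admissible word $\Sq^J\Sq^1$ is visibly in $\A\Sq^1$. Hence the admissible words not ending in $\Sq^1$ project to a basis of $\A/\A\Sq^1$, justifying the basis described in the statement. It follows that $\overline{x}=0$ exactly when $\operatorname{Adm}(x)$ involves only words ending in $\Sq^1$. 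Deleting that final $\Sq^1$ from each such word gives an explicit admissible $b_m^{\mathrm{can}}\in\A^{m-1}$ with $\operatorname{Adm}(y_m)=b_m^{\mathrm{can}}\Sq^1$.

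Next comes solvability, resting on Proposition~\ref{prop:powers}.

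\emph{Case $m$ not a power of $2$.} Here $\Sq^m\in\A_{>0}^2$, so $\Sq^m=\sum_i a_ib_i$ with $|a_i|,|b_i|>0$. Since $\A$ is generated as an algebra by the $\Sq^{2^j}$, each $b_i$ is a sum of monomials in these generators. Writing each as (prefix)$\cdot\Sq^{2^j}$ expresses $\Sq^m$ as a sum of products $\alpha\Sq^{2^j}$ with $|\alpha|>0$, hence $2^j<m$. Expanding each $\alpha$ in the admissible basis gives a solution: the terms with $j=0$ lie in $\A\Sq^1$ and vanish in the quotient, and the remaining terms have $j>0$ as required.

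\emph{Case $m=2^k$.} Suppose a solution existed. Then $\Sq^m+\sum c_{\alpha,j}\alpha\Sq^{2^j}=b\Sq^1$ in $\A$. Every term other than $\Sq^m$ is decomposable: the $\alpha$ have positive degree, and $|b|=m-1>0$ because $m\ge2$. So $\Sq^m\in\A_{>0}^2$, contradicting Proposition~\ref{prop:powers}.

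Finally, the boundary identity follows by applying the relative (integral-column) part of Proposition~\ref{prop:coherence}. Take $\rho'=r_m$, whose coefficient expression is exactly $y_m$, together with the representative $b_m^{\mathrm{can}}$ chosen above. That proposition gives $\partial\widehat T(y_m)=d_1(r_m+b_m^{\mathrm{can}}e_1)$, which is \eqref{eq:complete-boundary-Hm}.

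I expect the main obstacle to be the power-of-two case: one must be sure that membership in $\A\Sq^1$ genuinely counts as decomposable. This is where the hypothesis $m\ge2$ enters, and it is the only point needing care. The basis statement for $\A\Sq^1$ is the other point I would verify carefully rather than cite.
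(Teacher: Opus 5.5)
Your proposal is correct and follows the paper's proof essentially step for step. The shared steps are: termination from finite-dimensionality together with Theorem~\ref{thm:tracked-reduction}; solvability for nonpowers by writing $\Sq^m$ as a sum of products of positive-degree operations and splitting off a rightmost generator $\Sq^{2^j}$, with the $j=0$ terms dying in $\A/\A\Sq^1$; nonsolvability for $m=2^q$ by the contradiction in $\Q(\A)$ using $|b|=m-1>0$; and \eqref{eq:complete-boundary-Hm} from the corrected boundary identity, which you reach through the relative part of Proposition~\ref{prop:coherence} rather than quoting \eqref{eq:corrected-track-reduction-boundary} directly.

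Your explicit check that the admissible words not ending in $\Sq^1$ form a basis of $\A/\A\Sq^1$, and your concrete choice of $b_m^{\mathrm{can}}$ (delete the final $\Sq^1$ from each word of $\operatorname{Adm}(y_m)$), are details the paper leaves implicit. The second is worth keeping. Replacing $b_m^{\mathrm{can}}$ by $b_m^{\mathrm{can}}+k$ with $k\Sq^1=0$ in $\A$ changes $d_1(\widetilde r_m)$ by the lifted term $\widetilde k\,\widetilde\Sq^1$, which lies over $0\in\A$ but need not vanish in $D_0$. Meanwhile $H_m=\widehat T(y_m)$ is unchanged, so your choice is the one for which the identity holds literally.
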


\begin{proof}
The procedure is finite because, in each fixed degree, the Steenrod algebra has a finite admissible basis.  Hence $(\A/\A\Sq^1)^m$ is finite-dimensional and the displayed system is an ordinary finite system of linear equations over $\F$.  Termination of every product reduction used to form the columns is Theorem~\ref{thm:tracked-reduction}.

We prove existence and nonexistence of solutions.  By Proposition~\ref{prop:powers}, the connected algebra $\A$ is generated by the indecomposable one-square operations $\Sq^{2^j}$, including $\Sq^1$, and the one-square operation $\Sq^m$ maps to zero in $\Q(\A)$ if and only if $m$ is not a power of $2$.  If $m$ is not a power of $2$, then $\Sq^m$ is a sum of products of positive-degree operations.  In each such product, repeatedly replace the rightmost indecomposable factor by one of the generators $\Sq^1,\Sq^2,\Sq^4,\ldots$.  Expanding the remaining left factor in the admissible basis gives an equality in $\A$
\[
\Sq^m+
\sum_{\alpha,j}c_{\alpha,j}\alpha\Sq^{2^j}
+
 b\Sq^1=0
\]
with $0<2^j<m$.  Passing to $\A/\A\Sq^1$ gives a solution of \eqref{eq:finite-system-m}, and this solution is among the finite column systems because every term $\alpha\Sq^{2^j}$ has total degree $m$.

Conversely, if $m=2^q$ with $q>0$ and \eqref{eq:finite-system-m} had a solution, then in $\A$ one would have
\[
\Sq^{2^q}+
\sum_{\alpha,j}c_{\alpha,j}\alpha\Sq^{2^j}+b\Sq^1=0
\]
with $0<2^j<2^q$.  Every term except the leading $\Sq^{2^q}$ is decomposable: in the lower-column terms the left factor $\alpha$ has positive degree, and $b\Sq^1$ is also decomposable.  Passing to $\Q(\A)$ would therefore kill the nonzero indecomposable class of $\Sq^{2^q}$, contradicting Proposition~\ref{prop:powers}.  Hence no triangular lower-column solution exists in a power-of-two degree.

Assume now that $m$ is not a power of $2$ and choose a solution.  By construction, $\operatorname{Adm}(y_m)$ lies in $\A\Sq^1$; choose $b_m^{\mathrm{can}}$ as above.  Different choices differ by an element of $\Ker(-\Sq^1)$ and alter the total-row lift only by a boundary in the standard integral $\Sq^1$ tower, so no uniqueness is required for the computation.  The corrected boundary identity \eqref{eq:corrected-track-reduction-boundary} gives
\[
\partial H_m
=
\widehat{y_m}_{\mathrm{word}}+
\widehat{b_m^{\mathrm{can}}\Sq^1}_{\mathrm{adm}}
=
d_1(\widetilde r_m)
\]
with the lift conventions fixed in Section~\ref{sec:secondary-steenrod}.  Thus $\widetilde r_m$ is a first secondary cycle equipped with the displayed null track.
\end{proof}

\begin{lemma}[Low-degree primary syzygies]\label{lem:low-degree-syzygies}
In total degree at most $8$, the indecomposable homogeneous primary syzygies for the total row \eqref{eq:low-degree-row} are represented by
\[
q_2=\Sq^1e_1,
\]
\[
z_4=\Sq^2e_2+\Sq^3e_1,
\]
\[
z_5=\Sq^2\Sq^1e_2+\Sq^1e_4+\Sq^4e_1,
\]
\[
\widetilde r_6=e_6+\Sq^2e_4+\Sq^5e_1,
\]
and
\[
z_8=\Sq^6e_2+\Sq^4e_4+\Sq^2e_6.
\]
Their secondary tracks may be chosen as
\[
H_{1,1}=u[1,1],
\qquad
H_4=u[2,2],
\]
\[
H_5=\Sq^2u[1,2]+u[2,3]+u[1,4]
+\mu_0(\Sq^2\Sq^3)+\mu_0(\Sq^5),
\]
\[
H_6=u[2,4],
\qquad
H_8=u[4,4]+u[2,6]+\mu_0(\Sq^7\Sq^1).
\]
The corresponding even-column syzygies in $K_F=\Ker(\nu)$ are $\rho_4=\Sq^2e_2$, $\rho_5=\Sq^2\Sq^1e_2+\Sq^1e_4$, $r_6=e_6+\Sq^2e_4$, and $\rho_8=\Sq^6e_2+\Sq^4e_4+\Sq^2e_6$.
\end{lemma}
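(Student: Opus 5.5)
The plan is to verify the statement by finite linear algebra, degree by degree, and then obtain the tracks directly from Construction~\ref{cons:tracked-reduction} and Proposition~\ref{prop:coherence}. For each total degree $t\le 8$, the degree-$t$ part of the row \eqref{eq:low-degree-row} is the linear map
\[
\A^{t-1}\oplus\A^{t-2}\oplus\A^{t-4}\oplus\A^{t-6}\oplus\A^{t-8}\longrightarrow\A^t,
\qquad
(x_1,x_2,x_4,x_6,x_8)\longmapsto x_1\Sq^1+x_2\Sq^2+x_4\Sq^4+x_6\Sq^6+x_8\Sq^8 .
\]
Its kernel $Z^t$ is computed by writing every product in admissible normal form with $\operatorname{Adm}$. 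The decomposable part $D^t\subseteq Z^t$ is the span of all $\A_{>0}$-multiples of kernel elements of lower degree. The indecomposable syzygies in degree $t$ are then represented by a basis of $Z^t/D^t$. The first step is to record $\dim Z^t$ by rank–nullity from the dimensions of $\A^k$ for $k\le 8$, which are $1,1,1,2,2,2,3,4,4$. The surjectivity pattern is used here: the image is all of $\A^t$ in degrees where every admissible basis element ends in $\Sq^1$, $\Sq^2$, $\Sq^4$, $\Sq^6$ or $\Sq^8$, which holds for every $t>0$ because the total row contains each $\Sq^{2i}$.

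The second step is to exhibit the listed elements and check that they lie in the kernel. This uses only the Adem relations in \eqref{eq:adem-relation}:
\begin{itemize}
\item $\Sq^1\Sq^1=0$ gives $q_2$;
\item $\Sq^2\Sq^2=\Sq^3\Sq^1$ gives $z_4$;
\item $\Sq^2\Sq^1\Sq^2=\Sq^2\Sq^3=\Sq^5+\Sq^4\Sq^1$ together with $\Sq^1\Sq^4=\Sq^5$ gives $z_5$;
\item $\Sq^2\Sq^4=\Sq^6+\Sq^5\Sq^1$ gives $\widetilde r_6$, matching Theorem~\ref{thm:complete-Hm} with $m=6$;
\item $\Sq^4\Sq^4=\Sq^7\Sq^1+\Sq^6\Sq^2$ and $\Sq^2\Sq^6=\Sq^7\Sq^1$ give $z_8$.
\end{itemize}
The even-column syzygies $\rho_4,\rho_5,r_6,\rho_8$ are obtained by deleting the $e_1$ components. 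They lie in $K_F$ because the deleted terms are right multiples of $\Sq^1$.

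The third step, which is the main obstacle, is to show that nothing else is indecomposable, that is, that $Z^t=D^t+\langle\text{listed element of degree }t\rangle$ for every $t\le 8$. I would compute $\dim D^t$ explicitly by listing the products $\alpha q_2$, $\alpha z_4$, $\alpha z_5$, $\alpha\widetilde r_6$ with $\alpha$ admissible and of the right degree. I would then reduce these products with $\operatorname{Adm}$ and compute their rank. The delicate degrees are $3$, $7$ and $8$. In degree $3$, $\Sq^2q_2$ must account for the whole kernel; in particular, $\Sq^1e_2+\Sq^2e_1$ is not a syzygy. In degree $7$, I must show that the products $\Sq^5q_2$, $\Sq^4\Sq^1q_2$, $\Sq^3z_4$, $\Sq^2\Sq^1z_4$, $\Sq^2z_5$, $\Sq^1\widetilde r_6$ together with candidates like $\Sq^5e_2+\Sq^3e_4+\Sq^1e_6+\cdots$ leave no new class. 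Degree $8$ must leave exactly the one new class $z_8$. For this degree I would check that $z_8$ is not in $D^8$ by comparing $e_6$-coefficients: the only lower syzygy with an $e_6$-term is $\widetilde r_6$, so its multiples contribute $e_6$-coefficients in $\A_{>0}\cdot 1$ paired with $\Sq^2e_4$ terms. I would then compare these coefficients against $z_8$'s $\Sq^4e_4$. This bookkeeping is mechanical but error-prone, and I would carry it out with the appendix-style enumeration of admissible bases.

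The final step is the tracks. Here I would apply Proposition~\ref{prop:coherence}, using its relative form with the integral column, to each composite. I would run the leftmost tracked reduction of Construction~\ref{cons:tracked-reduction} on $\sum_{\lambda\ne1}a_\lambda g_\lambda$ and record the $u[a,b]$ contexts. For $z_5$, reducing $\Sq^2\Sq^1\Sq^2$ produces $\Sq^2u[1,2]$ and $u[2,3]$, and $\Sq^1\Sq^4$ produces $u[1,4]$. The duplicated monomials $\Sq^5$ and $\Sq^2\Sq^3$ then yield the $\mu_0$ corrections in \eqref{eq:corrected-track-definition}. For $z_8$, the reductions give $u[4,4]+u[2,6]$, and the duplicate $\Sq^7\Sq^1$ gives $\mu_0(\Sq^7\Sq^1)$. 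Theorem~\ref{thm:tracked-reduction} then yields $\partial H=d_1(\text{syzygy})$ in each case, and $H_{1,1}$ is the standard track fixed earlier.
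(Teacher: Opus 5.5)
This is essentially the paper's own argument, and it works. Kernel membership and the track boundaries come from the Adem relations $[1,1]$, $[2,2]$, $[1,2]$, $[2,3]$, $[1,4]$, $[2,4]$, $[4,4]$, $[2,6]$, with $\mu_0$ terms absorbing the doubled lifts. Completeness is the finite row reduction modulo positive-degree multiples of earlier syzygies, which the paper delegates to the appendix script and you propose to carry out by hand.

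Your count goes through. For $t=1,\dots,8$ one finds $\dim Z^t=0,1,0,2,3,3,4,7$ and $\dim D^t=0,0,0,1,2,2,4,6$. Your degree-$8$ comparison also works. Write an element of $D^8$ as $\alpha q_2+\beta z_4+\gamma z_5+\delta\widetilde r_6$. Matching the $e_6$-coefficient of $z_8$ forces $\delta=\Sq^2$. Its $e_4$-coefficient is then $\gamma\Sq^1+\Sq^2\Sq^2=(\gamma+\Sq^3)\Sq^1\in\A\Sq^1$, whereas $z_8$ has $\Sq^4$ there.

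Two slips should be fixed.
\begin{itemize}
\item It is false that every positive-degree admissible monomial ends in $\Sq^1$ or an even square; $\Sq^3$, $\Sq^5$ and $\Sq^7$ are counterexamples. Surjectivity of the row onto $\A^t$ instead follows from $\Sq^{2r+1}=\Sq^1\Sq^{2r}$, as in Lemma~\ref{lem:CF}. This is what your rank--nullity count needs.
\item The element $\Sq^2q_2=\Sq^2\Sq^1e_1$ has degree $4$, not $3$. In degree $3$ the map $\A^2e_1\oplus\A^1e_2\to\A^3$ is an isomorphism, so $Z^3=0$ and degree $3$ is not delicate at all. The element $\Sq^2q_2$ is instead what spans $D^4$.
\end{itemize}
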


\begin{proof}
The identities follow by applying the Adem relations.  For $q_2$ one has $\partial H_{1,1}=\Sq^1\Sq^1$.  For $z_4$ the relation $[2,2]$ gives
\[
\partial u[2,2]=\Sq^2\Sq^2+\Sq^3\Sq^1.
\]
For $z_5$, the three relations $[1,2]$, $[2,3]$, and $[1,4]$ give
\[
\partial(\Sq^2u[1,2])=\Sq^2\Sq^1\Sq^2+\Sq^2\Sq^3,
\]
\[
\partial u[2,3]=\Sq^2\Sq^3+\Sq^5+\Sq^4\Sq^1,
\]
and
\[
\partial u[1,4]=\Sq^1\Sq^4+\Sq^5.
\]
Before the $\mu_0$ correction, the sum of these three boundaries contains the two duplicate contributions $2\Sq^2\Sq^3$ and $2\Sq^5$ in $D_0$.  The terms $\mu_0(\Sq^2\Sq^3)$ and $\mu_0(\Sq^5)$ are exactly the Nassau cancellation tracks for these duplicate contributions, since $\partial(\mu_0a)=2a$.  Thus the corrected track satisfies the equality in the pair algebra
\[
\partial H_5=\Sq^2\Sq^1\Sq^2+\Sq^1\Sq^4+\Sq^4\Sq^1=d_1(z_5).
\]
For $\widetilde r_6$, the relation $[2,4]$ gives
\[
\partial u[2,4]=\Sq^2\Sq^4+\Sq^6+\Sq^5\Sq^1=d_1(\widetilde r_6).
\]
For $z_8$, the relations $[4,4]$ and $[2,6]$ give
\[
\partial u[4,4]=\Sq^4\Sq^4+\Sq^7\Sq^1+\Sq^6\Sq^2,
\]
and
\[
\partial u[2,6]=\Sq^2\Sq^6+\Sq^7\Sq^1.
\]
Before the $\mu_0$ correction, their sum contains the duplicate contribution $2\Sq^7\Sq^1$ in $D_0$.  The term $\mu_0(\Sq^7\Sq^1)$ is the corresponding cancellation track.  Therefore
\[
\partial H_8=\Sq^6\Sq^2+\Sq^4\Sq^4+\Sq^2\Sq^6=d_1(z_8)
\]
in the pair algebra.
The assertion that these are the indecomposable homogeneous syzygies through degree $8$ is the finite row-reduction calculation in the admissible basis, after quotienting the kernel by the submodule generated by positive-degree multiples of earlier syzygies.  The same finite calculation is recorded in Appendix~\ref{sec:computer-verification}.
\end{proof}

\begin{remark}\label{rem:first-matrices}
The first matrices of the secondary free complex for the infinite fiber can now be written without suppressing the syzygy data.  Choose a homogeneous minimal set $\mathcal R_F$ of generators for $K_F=\Ker(\nu)$.  It contains the triangular generators $r_m$ for even nonpowers $m$, and it also contains additional Adem-kernel generators such as $\rho_4$, $\rho_5$, and $\rho_8$ from Lemma~\ref{lem:low-degree-syzygies}.  For each $\rho\in\mathcal R_F$, fix a total-row lift $\widetilde\rho=\rho+b_\rho e_1$ and a track $H_\rho$ with $\partial H_\rho=d_1(\widetilde\rho)$.

Let
\[
C_0^F=\B c_0,
\]
\[
C_1^F=\Sigma\B e_1\oplus\bigoplus_{i>0}\Sigma^{2i}\B e_{2i},
\]
and
\[
C_2^F=\Sigma^2\B q_2\oplus
\bigoplus_{\substack{m>0\text{ even}\\m\text{ not a power of }2}}\Sigma^m\B s_m
\oplus
\bigoplus_{\rho\in\mathcal R_F^{\mathrm{Ad}}}\Sigma^{|\rho|}\B a_\rho,
\]
where $\mathcal R_F^{\mathrm{Ad}}$ denotes the chosen additional minimal syzygies not belonging to the triangular family.  For $q\geq3$, the continued integral part has $C_q^F=\Sigma^q\B q_q$, and the remaining higher generators are supplied by the recursive secondary completion in Theorem~\ref{thm:recursive-completion}.

The first differentials are
\[
d_1(e_1)=\widetilde\Sq^1c_0,
\qquad
d_1(e_{2i})=\widetilde\Sq^{2i}c_0,
\]
\[
d_2(q_2)=\widetilde\Sq^1e_1,
\qquad
d_2(s_m)=\widetilde r_m
\quad(m\text{ even and not a power of }2),
\]
and
\[
d_2(a_\rho)=\widetilde\rho
\quad(\rho\in\mathcal R_F^{\mathrm{Ad}}).
\]
The associated null-composite tracks are
\[
\Gamma_1(q_2)=H_{1,1},
\qquad
\Gamma_1(s_m)=H_m,
\qquad
\Gamma_1(a_\rho)=H_\rho.
\]
The integral continuation uses $d_q(q_q)=\widetilde\Sq^1q_{q-1}$ and the track $H_{1,1}$ for each adjacent composite.  Thus the secondary complex includes the triangular relations and the additional Adem syzygies, so the low-degree obstruction represented by $z_4$ is no longer absent.
\end{remark}

\begin{construction}[Recursive secondary completion]\label{cons:recursive-completion}
Start with any of the displayed partial secondary mapping-fiber data above.  We use the chain-complex indexing
\[
\cdots \xrightarrow{d_3} C_2\xrightarrow{d_2}C_1\xrightarrow{d_1}C_0.
\]
Thus the null-composite track in degree $r$ is
\[
\Gamma_r:d_rd_{r+1}\Rightarrow0.
\]
Assume that $C_0,\ldots,C_n$, the differentials $d_r:C_r\to C_{r-1}$ for $1\leq r\leq n$, and the tracks $\Gamma_r$ for $1\leq r\leq n-1$ have been constructed and satisfy
\begin{equation}\label{eq:partial-coherence}
d_{r-1}\Gamma_r=\Gamma_{r-1}d_{r+1}
\qquad(2\leq r\leq n-1).
\end{equation}
A homogeneous secondary $n$-cycle is a pair $(x,\Theta)$ with
\[
x:\Sigma^d\B\longrightarrow C_n
\]
and with a track
\[
\Theta:d_nx\Rightarrow0
\]
satisfying the Baues--Jibladze cycle equation
\begin{equation}\label{eq:recursive-cycle-equation}
d_{n-1}\Theta=\Gamma_{n-1}x.
\end{equation}
For $n=1$ there is no lower equation.  Let $Z_n$ be the collection of all homogeneous secondary $n$-cycles over the current augmented partial complex.  Equivalently, after a Baues--Jibladze generating family of secondary cycles has been fixed, one may choose that family in place of all cycles; in the proof of secondary exactness we use the all-cycle version.  Define
\[
C_{n+1}=\bigoplus_{(x,\Theta)\in Z_n}\Sigma^d\B\,e_{x,\Theta},
\]
and set
\begin{equation}\label{eq:recursive-differential}
d_{n+1}(e_{x,\Theta})=x,
\qquad
\Gamma_n(e_{x,\Theta})=\Theta.
\end{equation}
For $n=1$ in the infinite-fiber case, the chosen set $Z_1$ contains the integral class $q_2$, the triangular classes $s_m$ for even nonpowers, and the additional Adem-kernel classes $a_\rho$; the low-degree representatives displayed in the table below are the minimal generators of $Z_1$ through total degree $8$.
\end{construction}

\begin{theorem}[Baues--Jibladze completion of the mapping-fiber data]\label{thm:recursive-completion}
Construction~\ref{cons:recursive-completion}, applied to all homogeneous secondary cycles, produces a secondary free $\B$-resolution extending the displayed mapping-fiber matrices and tracks.  The same construction may be replaced by a fixed homogeneous Baues--Jibladze generating family when such a family has been chosen.  In every degree it satisfies
\begin{equation}\label{eq:recursive-coherence}
d_{r-1}\Gamma_r=\Gamma_{r-1}d_{r+1}
\qquad(r\geq2),
\end{equation}
and its image in the homotopy category is an $\A$-free resolution of the primary module resolved by the augmentation.
\end{theorem}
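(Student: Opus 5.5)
The plan is to argue by induction on the chain degree $n$, using Construction~\ref{cons:recursive-completion} as the inductive step. The base case is the displayed partial data. This consists of $C_0$ and $C_1$ with $d_1$ given by the row \eqref{eq:low-degree-row}, or by the corresponding rows for $F_n$ and $F_{n\Z}$, together with $C_2$, $d_2$ and $\Gamma_1$ from Remark~\ref{rem:first-matrices}. The equation $\partial\Gamma_1=d_1d_2$ on generators is Proposition~\ref{prop:coherence}, Theorem~\ref{thm:complete-Hm} and Lemma~\ref{lem:low-degree-syzygies}. Since $\Gamma_1(e)$ for $e\in\{q_2,s_m,a_\rho\}$ is a first secondary cycle, the displayed first row is itself an instance of the recursion with a chosen subfamily of $Z_1$. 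For $n=1$ there is no lower coherence equation, so the base case only needs the track identities already proved.

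For the inductive step, assume $C_0,\dots,C_n$, $d_r$ and $\Gamma_r$ are built with \eqref{eq:partial-coherence}. Define $C_{n+1}$, $d_{n+1}$ and $\Gamma_n$ by \eqref{eq:recursive-differential}. The map $\Gamma_n$ is a track $d_nd_{n+1}\Rightarrow 0$ on each generator, because $\Theta:d_nx\Rightarrow0$. The coherence $d_{n-1}\Gamma_n=\Gamma_{n-1}d_{n+1}$ holds on the generator $e_{x,\Theta}$: it reads $d_{n-1}\Theta=\Gamma_{n-1}x$, which is exactly the cycle equation \eqref{eq:recursive-cycle-equation}. Both sides are $\B$-linear maps of secondary free modules, meaning tracks are composed with linear maps in the pair-algebra sense. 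A map out of a free secondary module is determined by its values on generators, so the equation holds on all of $C_{n+1}$. This gives \eqref{eq:recursive-coherence} in degree $r=n$. Homogeneity of the generators keeps each $C_{n+1}$ degreewise of finite type in the fixed-family version. In the all-cycle version it stays a legitimate, if large, free module.

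The remaining point is exactness, and I expect it to be the main obstacle. The claim is that $\pi_0$ of the complex is an $\A$-free resolution of the primary module, and that the complex is secondary exact in the Baues--Jibladze sense, so it computes $\sExt$. For primary exactness at $C_n$, take a primary cycle $\bar x$ with $d_n\bar x=0$ in $\A$ and lift it to $x$. Then $d_nx$ lies in $\partial$ of $D_1$-coordinates, so some track $\Theta_0:d_nx\Rightarrow0$ exists. The obstruction $d_{n-1}\Theta_0-\Gamma_{n-1}x$ is a self-track of $d_{n-1}d_nx$, hence an element of $\Sigma\A$-valued data in $\ker\partial$ applied to $C_{n-2}$. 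Using the coherence \eqref{eq:partial-coherence} one degree down, this element is a primary cycle of the shifted complex $\Sigma\pi_0C_\bullet$. By induction it is a primary boundary $d_{n-1}(\xi)$, and replacing $\Theta_0$ by $\Theta_0+\xi$ kills it. Thus every primary cycle extends to a secondary cycle $(x,\Theta)$, and $\bar x=d_{n+1}(e_{x,\Theta})$ in the homotopy category.

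Secondary exactness, that every secondary cycle bounds, is built in by the all-cycle choice. A fixed generating family works by the Baues--Jibladze generating property. Augmentation to the primary module at $n=0$ is the given map $\B c_0\to\HB(\cdot)$, and the primary cokernel of $d_1$ is $\A/(\Sq^1,\Sq^{2i})$ as required. The delicate point I would check first is the inductive obstruction argument. In particular, I need the kernel of $\partial:D_1\to D_0$ to be $\Sigma\A$ acting bimodularly, which is the pair-algebra property from Baues and Nassau. I also need the $\Sq^1$ tower with $H_{1,1}$ to satisfy the equality $\Sq^1H_{1,1}=H_{1,1}\Sq^1$, so that the base coherence holds relative to the integral part.
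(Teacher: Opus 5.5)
Your treatment of the coherence equation on the new generators, and of secondary exactness (every adjoined cycle bounds), is the same as the paper's. You diverge on primary exactness. The paper simply invokes Baues--Jibladze's passage from secondary to primary exactness (their Lemma~2.14). You instead give an explicit obstruction argument: lift $\bar x$, choose $\Theta_0$, observe that $\delta=d_{n-1}\Theta_0-\Gamma_{n-1}x$ lies in $\ker\partial\cong\Sigma\pi_0C_{n-2}$, show it is a cycle, and absorb it by exactness one step down. For $n\ge3$ this is sound, with one addition. To get $d_{n-2}\delta=0$ you need not only \eqref{eq:partial-coherence} but also the defining identity of a track between pair-module maps, $d_{n-2}d_{n-1}\Theta_0=\Gamma_{n-2}(\partial\Theta_0)=\Gamma_{n-2}d_nx$.

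The genuine gap is the bottom of the induction, $n=2$. There $\delta$ lies in $\Sigma\pi_0C_0$, and ``by induction it is a primary boundary'' requires $\delta\in\Sigma\im(\pi_0d_1)$. But $\pi_0d_1$ is not onto, and, as you say yourself in the base case, the construction imposes no equation on $1$-cycles. In the all-cycle version this step actually fails:
\begin{itemize}
\item $Z_1$ contains $(0,\xi)$ with $0\ne\xi\in\ker\partial\subset(C_0)_1$ in degree $1$.
\item The corresponding generator $g\in C_2$ is a primary cycle.
\item Any secondary $2$-cycle with primary part $\bar g$ would force $\xi\in\Sigma\im(\pi_0d_1)$, which is zero in that degree.
\item So $\bar g$ is never a boundary, and $\pi_0C_\bullet$ is not exact at $C_2$.
\end{itemize}

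To close your argument you must augment genuinely. You need the following data and checks:
\begin{itemize}
\item a map $\epsilon:C_0\to M$ to a secondary module with $\pi_0M=\Coker(\pi_0d_1)$ and $\ker\partial_M\cong\Sigma\pi_0M$ (note that $\HB X$ itself does not qualify when $K\ne0$, since $\pi_0\HB X=H^*X\supsetneq C$);
\item a track $\Gamma_0:\epsilon d_1\Rightarrow0$;
\item the cycle equation $\epsilon\Theta=\Gamma_0x$ imposed on $1$-cycles;
\item the check $\epsilon\Gamma_1=\Gamma_0d_2$ on the displayed generators $q_2,s_m,a_\rho$.
\end{itemize}
The last check says that the tracked Adem tracks $H_{1,1},H_m,H_\rho$ are compatible with $\Gamma_0$. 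This is not automatic, since those tracks were chosen without reference to any augmentation. With these in place, $\epsilon\delta=\Gamma_0\partial\Theta_0-\Gamma_0d_2x=0$ gives $(\Sigma\pi_0\epsilon)(\delta)=0$, hence $\delta\in\Sigma\im(\pi_0d_1)$ by exactness at $C_0$.

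The paper's citation of Baues--Jibladze tacitly assumes exactly this augmented structure. Your more explicit route has the merit of showing where it is needed, but as written it does not supply it.
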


\begin{proof}
This is the Baues--Jibladze recursive construction of secondary resolutions, written in the indexing used in the present complexes.  We recall the verification because it is the point at which the all-degree differentials and tracks enter the calculation.

Let $(x,\Theta)\in Z_n$.  By definition, $d_{n+1}(e_{x,\Theta})=x$ and $\Gamma_n(e_{x,\Theta})=\Theta$.  Hence $\Gamma_n$ is a track from $d_nd_{n+1}$ to zero on each new generator.  The cycle equation \eqref{eq:recursive-cycle-equation} gives
\[
d_{n-1}\Gamma_n(e_{x,\Theta})=d_{n-1}\Theta=\Gamma_{n-1}x=\Gamma_{n-1}d_{n+1}(e_{x,\Theta}),
\]
which is precisely the coherence equation on the generator $e_{x,\Theta}$.  Free secondary $\B$-modules have maps and tracks determined by their values on homogeneous basis elements, with the standard compatibility with the left $\B$-action in the pair-module model.  Therefore the same equality holds on the whole new free summand by $\B$-linearity and additivity of tracks.

The same recursion gives secondary exactness.  Since all homogeneous secondary cycles are adjoined, every secondary $n$-cycle is the boundary of the corresponding generator in $C_{n+1}$.  Thus every secondary cycle is killed at the next stage.  If a homogeneous Baues--Jibladze generating family is used instead, the same conclusion follows by expanding the cycle in that family and using the additive track structure.  Baues--Jibladze's passage from secondary exactness to exactness of the underlying primary complex then gives the asserted $\A$-free primary resolution; this is the specialization of their definitions of secondary chain complex, secondary cycle, secondary resolution, and the existence construction for secondary resolutions \cite[Definitions~2.6, 2.10, and~2.12, Lemma~2.14]{BauesJibladze2006}.
\end{proof}

\begin{theorem}[Augmentation to the secondary cohomology of the fibers]\label{thm:augmentation-resolution}
Let $X\to Y\xrightarrow{f}Z$ be one of the three fiber sequences considered in the paper.  Choose representatives in the Baues--Nassau track model for the Steenrod-square components of the actual morphism of secondary cohomology modules
\[
f^{\sharp}:\HB Z\longrightarrow \HB Y.
\]
Let $\mathfrak F(f^{\sharp})$ denote the algebraic secondary fiber presentation of this morphism in the Baues--Jibladze additive track category.  Its primary shadow is the usual three-step exact factorization
\[
0\to K\to H^*Z\to I\to0,
\qquad
0\to I\to H^*Y\to C\to0,
\qquad
0\to C\to H^*X\to\Sigma^{-1}K\to0.
\]
The displayed first mapping-fiber rows are the primary rows of $\mathfrak F(f^{\sharp})$, with the standard integral $\Sq^1$ column inserted when $Y=\HZ$.  For every first-row primary syzygy, the corrected tracked Adem reduction supplies a null track in the fixed pair-algebra model.  Consequently the displayed data form an augmented partial secondary complex over $\HB X$.  The Baues--Jibladze recursive construction applied relative to this augmentation produces a secondary free $\B$-resolution of $\HB X$.
\end{theorem}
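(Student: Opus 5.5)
The plan is to assemble the statement from three ingredients already in hand: the long exact cohomology sequence of the fiber sequence, which fixes the primary shadow; Proposition~\ref{prop:coherence} together with Theorem~\ref{thm:complete-Hm} and Lemma~\ref{lem:low-degree-syzygies}, which supply the first-row tracks; and Theorem~\ref{thm:recursive-completion}, which completes the complex. First I identify the primary shadow of $\mathfrak F(f^{\sharp})$. Applying $\pi_0$ to $f^\sharp$ recovers $f^*:H^*Z\to H^*Y$. The cofiber (long exact) sequence in cohomology
\[
\cdots\to H^{*-1}Y\to H^{*-1}X\to H^*Z\xrightarrow{f^*}H^*Y\to H^*X\to\cdots
\]
then splits into the three short exact sequences displayed in the statement, with $K=\Ker f^*$, $I=\im f^*$, and $C=\Coker f^*$, and with the desuspension recording the connecting map. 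This part is purely primary and follows from exactness.

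Next I check that the displayed rows are the primary rows of $\mathfrak F(f^\sharp)$. In each case $H^*Z$ is free over $\A$ on the classes $\iota_n$ or $\iota_{2i}$, and $f^*$ sends these to $\Sq^n$ or $\Sq^{2i}$ in $H^*Y$. When $Y=H$, $H^*Y=\A$ and the row is $[\widetilde\Sq^n]$. When $Y=\HZ$, $H^*Y=\A/\A\Sq^1$, and presenting it by the first stage of the standard $\Sq^1$ resolution inserts the integral column $e_1$. This gives exactly \eqref{eq:low-degree-row} and its single-square analogues, and $M_F$, $\nu$, and $K_F$ then match $H^*Z$, $f^*$, and $K$. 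Every first-row primary syzygy is either a homogeneous syzygy of the total row, handled by the first part of Proposition~\ref{prop:coherence}, or a syzygy modulo $\A\Sq^1$, which lifts through the second part of that proposition. Either way it receives a corrected track $H_\rho$ with $\partial H_\rho=d_1(\widetilde\rho)$. Together with $H_{1,1}$ on the integral tower, whose coherence is the Mac Lane relation $\Sq^1H_{1,1}=H_{1,1}\Sq^1$, this makes the data an augmented partial secondary complex. Its augmentation $C_0\to\HB X$ is induced by the generator (the fundamental class pulled back to $X$), and the composite $d_1$ followed by the augmentation is null by a track coming from the fiber structure.

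Finally I apply Construction~\ref{cons:recursive-completion} relative to this augmentation, starting with $n=1$. Theorem~\ref{thm:recursive-completion} then gives coherence \eqref{eq:recursive-coherence} in all degrees and secondary exactness, hence a secondary free resolution of the module being resolved.

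The main obstacle is checking that this resolved module is $\HB X$ itself, not just the algebraic fiber $\mathfrak F(f^\sharp)$. Two things must be shown: that $\mathfrak F(f^\sharp)$ is equivalent to $\HB X$ as a secondary module, and that the augmentation track is compatible with $\Gamma_1$. For the equivalence, I would first invoke the Baues--Jibladze statement that the secondary cohomology functor carries fiber sequences of spectra to fiber sequences in the additive track category \cite{BauesJibladze2006}. I would then argue that two secondary modules with the same primary shadow and the same fiber presentation are equivalent, by comparing $\pi_1$ through the exact sequence $\pi_1\to\pi_0$ of the $\Sigma$-shifted pair module. For the track compatibility, I expect the degree-two coherence check, $d_0\Gamma_1=\Gamma_0 d_2$ with $\Gamma_0$ the augmentation track, to be the delicate point. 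If the direct approach fails, I would fall back on the choice of $f^\sharp$ representatives: altering the augmentation track by an element of $\pi_1$ absorbs any defect there.
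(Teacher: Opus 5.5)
Your first steps match the paper's proof. The long exact sequence fixes the primary shadow, Proposition~\ref{prop:coherence} supplies the first-row tracks, and Theorem~\ref{thm:recursive-completion} completes the complex. You are also right that everything hinges on identifying the resolved module with $\HB X$; the paper's proof settles this only by asserting it ``by the Baues--Jibladze definition''.

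\textbf{The gap.} The obstruction at that point lives on $\pi_0$, not on $\pi_1$ or on tracks, and it cannot be removed with these data.
\begin{itemize}
\item Your augmentation sends the single generator $c_0$ to the pulled-back fundamental class.
\item The recursion, which you start at $n=1$, never changes $C_0$ or $d_1$.
\item Hence the primary cokernel of $d_1$ is $\Coker(f^*)=C$, and on $\pi_0$ the augmentation has image the submodule $C\subset H^*X$.
\end{itemize}
Now $K\neq0$ in all three cases. The Adem relation gives $\Sq^{2n-1}\Sq^n=0$, so $\Sq^{2n-1}e_n\in K_n\subseteq K_{n\Z}$; and $\rho_4\in K_F$. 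So $H^*X\to\Sigma^{-1}K$ is nonzero, and the augmentation is not surjective on $\pi_0$. Therefore no completion that keeps $C_0=\B c_0$ and this $d_1$ can be a secondary resolution of $\HB X$, whose primary shadow would have to be an $\A$-free resolution of $H^*X$.

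The infinite fiber makes this sharp.
\begin{itemize}
\item By Lemma~\ref{lem:CF}, $\Coker(d_1)=C_F=\F$. By Theorem~\ref{thm:recursive-completion} itself, the completed complex then has primary shadow a free resolution of $\F$ and computes $\Ext_{\A}(\F,\F)$.
\item On the other hand, $H^*F$ is $\F$ in degree $0$, and that class generates only $C_F$. It is zero in degrees $1$ and $2$, and $H^3F\cong K_F^4=\F\{\rho_4\}$.
\item So $H^*F$ has an indecomposable in degree $3$, and $\Ext^{0,3}_{\A}(H^*F,\F)\neq0=\Ext^{0,3}_{\A}(\F,\F)$.
\end{itemize}

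\textbf{Why your repairs do not help.} Comparing $\pi_1$, or changing the augmentation track by elements of $\pi_1$, cannot create surjectivity on $\pi_0$. Separately, isomorphic $\pi_0$ and $\pi_1$ do not make two secondary modules equivalent without a map inducing them. For instance, $\HB(S^0\vee S^3)$ and $\HB$ of the cofiber of $\eta^2$ have isomorphic $\pi_0$ and $\pi_1$. Their secondary Ext differs, because $d_2$ kills $h_1^2$ in the latter.

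\textbf{What is actually needed.} You need a different construction:
\begin{itemize}
\item $C_0$ must contain, besides $c_0$, free generators mapping to lifts in $H^*X$ of $\A$-generators of $\Sigma^{-1}K$.
\item $d_1$ and the tracks must then encode the extension $0\to C\to H^*X\to\Sigma^{-1}K\to0$, as in a twisted horseshoe resolution.
\end{itemize}
For such a complex, the later description of the first secondary differential by $\partial_{CI}\partial_{IK}$ would need its own proof.
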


\begin{proof}
The point is functoriality.  Baues's secondary cohomology construction assigns to the map $f$ a morphism $f^{\sharp}$ of secondary $\B$-modules whose primary shadow is $f^*:H^*Z\to H^*Y$.  The three maps considered here are maps between products and suspensions of Eilenberg--Mac Lane spectra, so their Steenrod-square components are represented by actual secondary operations before any algebraic reduction is performed.  The algebraic secondary fiber presentation is defined from this morphism, not merely from its primary shadow.

The primary row of $\mathfrak F(f^{\sharp})$ is exactly the row obtained from the long exact sequence in ordinary cohomology.  The first null tracks are the chosen representatives of the secondary homotopies expressing the zero composites in this algebraic fiber presentation.  Proposition~\ref{prop:coherence} supplies such representatives for the displayed row syzygies after the word-lift and admissible-lift convention has been fixed.  Applying Construction~\ref{cons:recursive-completion} over this augmentation kills all homogeneous secondary cycles in positive degrees.  By the Baues--Jibladze definition, a secondary exact augmented free complex obtained in this way is a secondary free resolution of the augmented secondary module, namely $\HB X$.
\end{proof}

\begin{lemma}[Secondary Ext of a completed mapping-fiber resolution]\label{lem:mapping-fiber-secondary-ext}
Let
\[
0\to K\to H^*Z\to I\to0,
\qquad
0\to I\to H^*Y\to C\to0,
\qquad
0\to C\to H^*X\to\Sigma^{-1}K\to0
\]
be the exact factorization associated with one of the secondary mapping-fiber presentations above.  Let $\mathcal R_X$ be the completed augmented secondary free resolution obtained from the corrected first-row tracks and the Baues--Jibladze recursion.  After applying $\Hom_{\B}(-,\F)$ and passing to the primary description of the first secondary cochain differential, the underlying graded vector space decomposes as
\[
\Ext_{\A}^{*,*}(C,\F)
\oplus
\Ext_{\A}^{*,*}(\Sigma^{-1}K,\F),
\]
with differential component from the second summand to the first given by
\[
D=\partial_{CI}\partial_{IK}.
\]
Consequently
\[
\sExt^{*,*}(\HB X,\F)
\cong
\Coker(D)
\oplus
\Ker(D),
\]
with the bidegree shifts inherited from the suspension $\Sigma^{-1}K$.
\end{lemma}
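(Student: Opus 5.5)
The plan is to filter the completed resolution $\mathcal R_X$ by the two pieces of the mapping-fiber presentation and compute secondary Ext through the Baues--Jibladze comparison between secondary Ext and the homology of the primary $E_2$-term under the first secondary differential.

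First, by Theorem~\ref{thm:augmentation-resolution}, $\mathcal R_X$ is a secondary free resolution of $\HB X$. Its primary shadow $\pi_0\mathcal R_X$ is therefore an $\A$-free resolution of $H^*X$, by Theorem~\ref{thm:recursive-completion}. I will arrange this primary resolution as a horseshoe-type resolution adapted to the short exact sequence $0\to C\to H^*X\to\Sigma^{-1}K\to0$. The generators split into those resolving $C$ and those resolving $\Sigma^{-1}K$:
\begin{itemize}
\item for $C$, these are the integral column, the $e_{2i}$, and the $q_q$ tower;
\item for $\Sigma^{-1}K$, these are the syzygy generators $s_m$, $a_\rho$, and their recursive descendants.
\end{itemize}
Applying $\Hom_{\A}(-,\F)$, minimality of the chosen generating sets (Remark~\ref{rem:first-matrices}) makes the induced primary differential on the $\Hom$-complex vanish. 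This gives the underlying graded vector space $\Ext_{\A}(C,\F)\oplus\Ext_{\A}(\Sigma^{-1}K,\F)$. In our cases $H^*X$ is an extension, so the long exact Ext sequence has connecting map, and I must first check that this connecting map is itself part of the displayed differential rather than already absorbed. The cleaner route is to note that $H^*X\cong C\oplus\Sigma^{-1}K$ additively. Its extension class is exactly what the secondary cochain differential records, via the tracks $\Gamma_1(s_m)=H_m$ and $\Gamma_1(a_\rho)=H_\rho$.

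Second, I will identify the primary description of the first secondary cochain differential. In the Baues--Jibladze setting this differential is induced by the tracks $\Gamma_q$ through the map $\Sigma\A\to\ker(\partial)$ in $\B$. On a class dual to a generator of the $\Sigma^{-1}K$ part, such as $s_m$ or $a_\rho$, the track $H_\rho$ has boundary $d_1(\widetilde\rho)$. The element $\widetilde\rho=\rho+b_\rho e_1$ passes from $K$ through $H^*Z$ into $I$, and then into $C$ through the $e_1$ and $e_{2i}$ columns. Reading off the coefficient therefore gives the composite of the two connecting homomorphisms: $\partial_{IK}$ for $0\to K\to H^*Z\to I\to0$, followed by $\partial_{CI}$ for $0\to I\to H^*Y\to C\to0$. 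The component from the $C$-summand to itself, and the one from the $\Sigma^{-1}K$-summand to itself, vanish because those parts come from free resolutions of $H^*Y$-type and $K$-type modules built from honest operations. On those parts the tracks are the standard ones, such as $H_{1,1}$ with the Mac Lane coherence $\Sq^1H_{1,1}=H_{1,1}\Sq^1$, and they produce no shadow.

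Third, with a two-term differential that is strictly upper triangular with the single component $D$, the homology is $\Coker(D)\oplus\Ker(D)$ with the inherited bidegree shifts. Baues--Jibladze (\cite[Theorem~7.3]{BauesJibladze2006}) identify $\sExt$ with this homology.

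The main obstacle is the second step: showing that the track-induced differential's off-diagonal component is precisely $\partial_{CI}\partial_{IK}$, and that the diagonal components vanish. There are two things to establish. The first is that the $\mu_0$ corrections contribute only terms of the form $2a$. Such terms have zero primary shadow in the $\Sigma\A$-coordinate except through $\Sq^1$, that is, through the $h_0$ tower landing in $C$. The second is a naturality argument: the Yoneda product with the two-extension is computed by the same lift-and-compose recipe as above. I would first try this on the generators $s_m$ and $a_\rho$, and then extend to the recursively adjoined generators by the coherence equation \eqref{eq:recursive-coherence}.
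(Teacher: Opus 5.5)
Your outline matches the paper's: split along $0\to C\to H^*X\to\Sigma^{-1}K\to0$, read off one off-diagonal component through the first two sequences, then take kernel and cokernel. However, the justification you give for the first step fails.

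The complex $\mathcal R_X$ is built on $C_0=\B c_0$ with the displayed row $d_1$, and Construction~\ref{cons:recursive-completion} then makes it exact in positive degrees. So its primary shadow is a free resolution of $\Coker(\pi_0 d_1)=C$. The augmentation $c_0\mapsto i^*\iota$ only reaches $i^*H^*Y=C$. The generators $s_m$, $a_\rho$ are therefore second-stage generators of this resolution of $C$, not generators resolving $\Sigma^{-1}K$. Their duals sit in bidegree $(2,|\rho|)$, while the classes of $\Ext^0_{\A}(\Sigma^{-1}K,\F)$ you want them to represent sit in $(0,|\rho|-1)$, which is a different stem. For $F$, for instance, $s_6^\vee$ just cancels $e_6^\vee$, because $d_2(s_6)=\widetilde r_6$ has unit coefficient on $e_6$; the $\Hom$-homology is $\Ext_{\A}(\F,\F)$. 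Minimality cannot be invoked either, since the all-cycle recursion is far from minimal.

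Even for a genuine resolution of $H^*X$, the direct-sum description of $\Ext_{\A}(H^*X,\F)$ requires the connecting map $\delta\colon\Ext^{s,t}_{\A}(C,\F)\to\Ext^{s+1,t}_{\A}(\Sigma^{-1}K,\F)$ of the third sequence to vanish. Your claim that the extension class is what the tracks $H_m$, $H_\rho$ record conflates two different things. That class lies in $\Ext^1_{\A}(\Sigma^{-1}K,C)$ and is primary data that changes $E_2$ itself. The secondary differential runs the other way, from the $\Sigma^{-1}K$-part to the $C$-part in bidegree $(2,1)$, and is governed by the two-extension. For $2\colon S\to S$ (fiber $\Sigma^{-1}S/2$) one has $f^*=0$, so $I=0$ and $D=0$, yet $\delta$ is multiplication by $h_0$. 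For $F$ the splitting does hold for degree reasons: $C_F=\F$ lives in degree $0$ and $\Sigma^{-1}K_F$ in degrees $\geq 3$. For $F_n$ and $F_{n\Z}$ it needs an argument.

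Your second step is a plan rather than a proof, and the parts you do assert are unsupported. The vanishing of the $C\to C$ and $\Sigma^{-1}K\to\Sigma^{-1}K$ components does not follow from the tracks being standard. Naturality along $X\to Y$ only shows that $i^*d_2$ lands in $\Ker\bigl(\Ext_{\A}(C,\F)\to\Ext_{\A}(H^*Y,\F)\bigr)$, which for $F$ is the whole positive-stem part of $\Ext_{\A}(\F,\F)$. Identifying the remaining component with $\partial_{CI}\partial_{IK}$ by reading coefficients off $H_\rho$ is exactly the Bruner--Rognes theorem, and that theorem controls only $i^*d_2q^*$.

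The paper's own proof asserts the same two points (the decomposition and the single component $D$) without argument, so it does not close these gaps either. Without the syzygy generators, the displayed row is the algebraic mapping cone. Its $\Hom$-homology is the hyper-Ext of $[H^*Z\to H^*Y]$, in which $\Ker(D)$ sits at $(s+1,t)$ rather than $(s,t-1)$; for $F$ that is $(1,2i)$ instead of $(0,2i-1)$. So neither the cone nor its completion produces the $\Sigma^{-1}K$-grading. A correct argument has to work with $E_2(X)=\Ext_{\A}(H^*X,\F)$ and the full Adams $d_2$, establishing $\delta=0$ and the vanishing of the other components, and use Baues--Jibladze only to identify $\sExt$ with $E_3$.
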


\begin{proof}
The augmented secondary mapping-fiber resolution is built from the algebraic secondary fiber presentation of $f^{\sharp}$.  Its primary shadow is filtered by the three short exact sequences displayed above.  The first part of the secondary cochain differential records the obstruction represented by the extension $0\to K\to H^*Z\to I\to0$; after forgetting tracks, this is exactly the ordinary connecting homomorphism $\partial_{IK}$.  The second part records the subsequent obstruction through $0\to I\to H^*Y\to C\to0$; this is $\partial_{CI}$.  Hence the primary shadow of the secondary differential from the $\Sigma^{-1}K$ summand to the $C$ summand is $D=\partial_{CI}\partial_{IK}$.  The summand indexed by $C$ receives this map and the summand indexed by $\Sigma^{-1}K$ maps by it, so the homology of this first secondary cochain object is the displayed cokernel plus kernel.  The recursive higher cycle-killing choices complete the resolution by secondary boundaries over the same augmentation and do not alter this identified first secondary differential.
\end{proof}

The following table records the first syzygies in the row matrix \eqref{eq:low-degree-row} with columns ordered as $(e_1,e_2,e_4,e_6,e_8)$.

\begin{center}
\begin{tabular}{c|c|c|c}
 degree & type & total-row syzygy & secondary track \\
\hline
$2$ & integral & $q_2=\Sq^1e_1$ & $u[1,1]$ \\
$4$ & Adem-kernel & $z_4=\Sq^2e_2+\Sq^3e_1$ & $u[2,2]$ \\
$5$ & Adem-kernel & $z_5=\Sq^2\Sq^1e_2+\Sq^1e_4+\Sq^4e_1$ & $H_5$ \\
$6$ & triangular nonpower & $\widetilde r_6=e_6+\Sq^2e_4+\Sq^5e_1$ & $u[2,4]$ \\
$8$ & Adem-kernel & $z_8=\Sq^6e_2+\Sq^4e_4+\Sq^2e_6$ & $H_8$
\end{tabular}
\end{center}

For $m=6$ the even-column relation is $r_6=e_6+\Sq^2e_4\in K_F$, and its total lift is $\widetilde r_6=r_6+\Sq^5e_1$.  Multiplication by \eqref{eq:low-degree-row} gives
\[
\Sq^6+\Sq^2\Sq^4+\Sq^5\Sq^1=0,
\]
and the boundary of the displayed track is exactly
\[
\partial u[2,4]=\Sq^2\Sq^4+\Sq^6+\Sq^5\Sq^1.
\]
If one rewrites $\Sq^5$ as $\Sq^1\Sq^4$ using $[1,4]=\Sq^1\Sq^4+\Sq^5$, then the equivalent total-row syzygy is
\[
\widetilde r_6=e_6+\Sq^2e_4+\Sq^1\Sq^4e_1,
\]
and the secondary track is
\[
H_6^{\mathrm{exp}}=u[2,4]+u[1,4]\Sq^1+\mu_0(\Sq^5\Sq^1),
\]
with
\[
\partial H_6^{\mathrm{exp}}=\Sq^6+\Sq^2\Sq^4+\Sq^1\Sq^4\Sq^1.
\]

\section{The secondary Ext calculation for a single square}\label{sec:single-square}

With the mapping-fiber data and their recursive secondary completion in place, we begin with the simplest fiber.  We compute the secondary Ext group for the fiber of a single square on $H$.  The calculation is given as secondary homological algebra.  Lemma~\ref{lem:mapping-fiber-secondary-ext} identifies the relevant cochain model: the differential is the two-extension composite determined by the primary short exact sequences.

Let
\[
F_n=\operatorname{fib}\left(H\xrightarrow{\Sq^n}\Sigma^nH\right).
\]
The first row of the secondary mapping-fiber complex is
\[
C_0^{(n)}=\B c_0,
\qquad
C_1^{(n)}=\Sigma^n\B e_n,
\]
with matrix
\[
d_1(e_n)=\widetilde\Sq^n c_0.
\]
The induced primary map is left multiplication
\[
\Sigma^n\A\xrightarrow{\Sq^n}\A.
\]
Let
\[
K_n=\Ker(\Sigma^n\A\to\A),
\qquad
I_n=\im(\Sigma^n\A\to\A),
\qquad
C_n=\Coker(\Sigma^n\A\to\A).
\]
Choose a homogeneous generating set $\mathcal R_n$ for $K_n$.  For each $\rho\in\mathcal R_n$, Proposition~\ref{prop:coherence} supplies a corrected pair-algebra track $H_\rho$ with boundary $d_1(\rho)$.  The second stage is therefore
\[
C_2^{(n)}=\bigoplus_{\rho\in\mathcal R_n}\Sigma^{|\rho|}\B a_\rho,
\qquad
d_2(a_\rho)=\widetilde\rho,
\qquad
\Gamma_1(a_\rho)=H_\rho.
\]
All further modules and tracks are obtained from Construction~\ref{cons:recursive-completion}.  Thus the finite-square complex contains the first syzygies of multiplication by $\Sq^n$; the displayed first row alone is not being used as a primary free resolution.
The cohomology long exact sequence factors as
\begin{equation}\label{eq:Fn-ses}
0\to K_n\to \Sigma^n\A\to I_n\to0,
\qquad
0\to I_n\to\A\to C_n\to0,
\qquad
0\to C_n\to H^*F_n\to\Sigma^{-1}K_n\to0.
\end{equation}
The first two short exact sequences give connecting maps
\[
\partial_{IK}:\Ext_{\A}^{s,t}(K_n,\F)\to\Ext_{\A}^{s+1,t}(I_n,\F),
\]
\[
\partial_{CI}:\Ext_{\A}^{s,t}(I_n,\F)\to\Ext_{\A}^{s+1,t}(C_n,\F).
\]
On the suspended term the secondary differential is
\begin{equation}\label{eq:Dn-shift}
D_n^{s,t}:
\Ext_{\A}^{s,t}(\Sigma^{-1}K_n,\F)
=
\Ext_{\A}^{s,t+1}(K_n,\F)
\xrightarrow{\partial_{IK}}
\Ext_{\A}^{s+1,t+1}(I_n,\F)
\xrightarrow{\partial_{CI}}
\Ext_{\A}^{s+2,t+1}(C_n,\F).
\end{equation}

\begin{lemma}\label{lem:Fn-boundaries}
For all $s\geq0$, the maps $\partial_{IK}$ and $\partial_{CI}$ are isomorphisms in the positive part forced by the two free middle terms.  The only classes not hit by the composite \eqref{eq:Dn-shift} are
\[
\Ext_{\A}^{0,0}(C_n,\F)\cong\F,
\qquad
\Ext_{\A}^{1,n}(C_n,\F)\cong\F.
\]
\end{lemma}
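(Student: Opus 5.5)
The plan is to use the two short exact sequences in \eqref{eq:Fn-ses} whose middle terms, $\Sigma^n\A$ and $\A$, are free. Over the connected algebra $\A$ one has
\[
\Ext_{\A}^{s,t}(\A,\F)=0\ (s>0),
\qquad
\Ext_{\A}^{0,t}(\A,\F)=\F\ (t=0),
\]
and similarly for $\Sigma^n\A$, which is concentrated in $t=n$. The long exact Ext sequences of the two extensions therefore collapse.

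\textbf{Step 1: the map $\partial_{IK}$.} From $0\to K_n\to\Sigma^n\A\to I_n\to0$ the long exact sequence shows that $\partial_{IK}:\Ext^{s,t}(K_n,\F)\to\Ext^{s+1,t}(I_n,\F)$ is an isomorphism for all $s\ge1$. In $s=0$ we get the exact sequence
\[
0\to\Hom(I_n,\F)\to\Hom(\Sigma^n\A,\F)\to\Hom(K_n,\F)\xrightarrow{\partial}\Ext^1(I_n,\F)\to0 .
\]
Here $\Hom(I_n,\F)^{t}$ is $\F$ exactly when the generator $\overline{\Sq^n}$ of $I_n$ is indecomposable in $I_n$, which it is, since $I_n=\A\Sq^n$ is cyclic. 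So $\Hom(I_n,\F)\to\Hom(\Sigma^n\A,\F)$ is an isomorphism $\F\to\F$ in $t=n$. Hence $\partial_{IK}$ is injective, and in fact bijective, in every degree $s\ge0$.

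\textbf{Step 2: the map $\partial_{CI}$.} The sequence $0\to I_n\to\A\to C_n\to0$ gives, in the same way, an isomorphism $\partial_{CI}:\Ext^{s}(I_n,\F)\to\Ext^{s+1}(C_n,\F)$ for $s\ge1$. In $s=0$ we have
\[
0\to\Hom(C_n,\F)\to\Hom(\A,\F)\to\Hom(I_n,\F)\xrightarrow{\partial}\Ext^1(C_n,\F)\to0 .
\]
The restriction $\Hom(\A,\F)\to\Hom(I_n,\F)$ is zero for $n>0$, for degree reasons. Thus $\Hom(C_n,\F)=\F$ in $(0,0)$, and $\partial_{CI}$ is an isomorphism $\Hom(I_n,\F)\cong\Ext^1(C_n,\F)$.

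\textbf{Step 3: the image of $D_n$.} Combining the two steps, the composite \eqref{eq:Dn-shift} is injective. Its image is all of $\Ext^{s}(C_n,\F)$ for $s\ge2$, together with the part of $\Ext^{1}(C_n,\F)$ coming through $\partial_{CI}$ from the image of $\partial_{IK}$ in $\Ext^0(I_n,\F)$. Since $\partial_{IK}$ lands in $\Ext^{\ge1}(I_n,\F)$, the image of $D_n$ meets no class of $\Ext^{\le1}(C_n,\F)$.

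We must therefore compute $\Ext^1(C_n,\F)\cong\Hom(I_n,\F)=\F$ in internal degree $n$, coming from the generator. Together with $\Ext^{0,0}(C_n,\F)=\F$, these are the classes not hit. Every $\Ext^{s}(C_n,\F)$ with $s\ge2$ equals $\partial_{CI}\partial_{IK}$ applied to $\Ext^{s-2}(K_n,\F)$, so it is hit.

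\textbf{Main obstacle.} I expect the delicate point to be the $s=0$ boundary terms: confirming that $\Hom(I_n,\F)$ is one-dimensional, generated in degree $n$, and that $\Hom(\Sigma^n\A,\F)\to\Hom(K_n,\F)$ is zero. The latter holds because $K_n\subset\A_{>0}\,e_n$ in positive degrees. I would settle this first by the minimality/indecomposability argument on the cyclic module $I_n$.
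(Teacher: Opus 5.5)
Your proposal is correct and follows essentially the paper's route. You use the long exact Ext sequences of the two extensions with free middle terms and then check the $s=0$ terms separately. Your observation that $K_n\subset\A_{>0}e_n$ (equivalently, that $\Hom_{\A}(I_n,\F)\to\Hom_{\A}(\Sigma^n\A,\F)$ is an isomorphism because $I_n=\A\Sq^n$ is cyclic) is exactly the paper's leading-term argument that the coordinate functional on $e_n$ restricts to zero on $K_n$. Your identification $\Ext_{\A}^{1,n}(C_n,\F)\cong\Hom_{\A}(I_n,\F)$ via $\partial_{CI}$ is a cleaner rendering of the paper's remark about the ``residual filtration-zero coordinate.'' It also shows that both connecting maps are isomorphisms for all $s\geq0$, so $D_n$ carries $\Ext_{\A}^{s}(K_n,\F)$ isomorphically onto $\Ext_{\A}^{s+2}(C_n,\F)$.
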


\begin{proof}
Apply $\Ext_{\A}(-,\F)$ to the first two exact sequences in \eqref{eq:Fn-ses}.  Since $\Sigma^n\A$ and $\A$ are free left $\A$-modules, their higher Ext groups vanish, while
\[
\Ext_{\A}^{0,*}(\A,\F)\cong\F,
\qquad
\Ext_{\A}^{0,*}(\Sigma^n\A,\F)\cong\Sigma^{0,n}\F.
\]
The long exact sequence for $0\to K_n\to\Sigma^n\A\to I_n\to0$ identifies the positive Ext of $K_n$ with the Ext of $I_n$ shifted upward by one in Adams filtration.  The filtration-zero term needs to be recorded separately.  The restriction map
\[
\Hom_{\A}(\Sigma^n\A,\F)\longrightarrow\Hom_{\A}(K_n,\F)
\]
is zero in internal degree $n$.  Indeed, a homogeneous element of $K_n$ detected by the augmentation functional on the suspended free generator would have coefficient of augmentation $1$ on that generator, so its leading term would be $e_n$; applying $d_1$ would then give the nonzero operation $\Sq^n$, which cannot be cancelled by positive-degree terms in degree $n$.  Thus no element of $K_n$ is detected by this coordinate functional.  The long exact sequence for $0\to I_n\to\A\to C_n\to0$ identifies the positive Ext of $I_n$ with the Ext of $C_n$ shifted upward by one in Adams filtration.  The degree-zero augmentation of $\A$ contributes the class in $\Ext^{0,0}(C_n,\F)$, and the residual filtration-zero coordinate from the suspended free generator contributes, after one connecting morphism, the class in $\Ext^{1,n}(C_n,\F)$.  All other terms lie in the image of the two-extension composite.
\end{proof}

\begin{theorem}\label{thm:Fn}
For $n>0$,
\[
\sExt^{*,*}(\HB F_n,\F)
\cong
\F\oplus\Sigma^{1,n}\F.
\]
\end{theorem}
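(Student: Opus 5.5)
The plan is to assemble Theorem~\ref{thm:Fn} from the general machinery already in place rather than by a fresh computation. First, Theorem~\ref{thm:augmentation-resolution} applies to the fiber sequence $F_n\to H\xrightarrow{\Sq^n}\Sigma^nH$. The first row $d_1(e_n)=\widetilde\Sq^nc_0$, the second stage built from the generating set $\mathcal R_n$ of $K_n$ with tracks $H_\rho$ from Proposition~\ref{prop:coherence}, and the recursive completion of Theorem~\ref{thm:recursive-completion} together give a secondary free $\B$-resolution $\mathcal R_{F_n}$ of $\HB F_n$. Lemma~\ref{lem:mapping-fiber-secondary-ext} then identifies $\sExt^{*,*}(\HB F_n,\F)$ with $\Coker(D_n)\oplus\Ker(D_n)$. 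Here $D_n=\partial_{CI}\partial_{IK}$ is the composite \eqref{eq:Dn-shift}, viewed as a map from $\Ext_\A(\Sigma^{-1}K_n,\F)$ to $\Ext_\A(C_n,\F)$.

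The second step is to compute $\Ker(D_n)$ and $\Coker(D_n)$ using the long exact sequences of the first two short exact sequences in \eqref{eq:Fn-ses}. Because $\A$ and $\Sigma^n\A$ are free, Lemma~\ref{lem:Fn-boundaries} says that $\partial_{IK}$ and $\partial_{CI}$ are isomorphisms away from the filtration-zero contributions of the free middle terms.

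\begin{itemize}
\item \textbf{Kernel.} Injectivity of $\partial_{IK}$ comes from the vanishing of the restriction $\Hom_\A(\Sigma^n\A,\F)\to\Hom_\A(K_n,\F)$, proved in Lemma~\ref{lem:Fn-boundaries}. Injectivity of $\partial_{CI}$ comes from the fact that every class of $\Hom_\A(I_n,\F)$ extending to $\A$ would have to be detected by the unit in degree $0$, and $I_n$ has no degree-$0$ part since $n>0$. Together these give $D_n$ injective, so $\Ker(D_n)=0$.
\item \textbf{Cokernel.} By the second assertion of Lemma~\ref{lem:Fn-boundaries}, the only classes of $\Ext_\A(C_n,\F)$ missed by $D_n$ are $\Ext^{0,0}(C_n,\F)\cong\F$ and $\Ext^{1,n}(C_n,\F)\cong\F$. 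Hence $\Coker(D_n)\cong\F\oplus\Sigma^{1,n}\F$.
\end{itemize}

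The third step is bookkeeping of bidegrees. The suspension $\Sigma^{-1}K_n$ shifts $t$ by one, but since $\Ker(D_n)=0$ this shift never appears in the answer. The surviving classes sit in $(0,0)$ and $(1,n)$, which gives the stated isomorphism. As a consistency check, this matches Bruner's $E_3$-term in \cite[Theorem~3.1]{Bruner2026}: $\pi_*F_n$ is $\Z/2$ in degrees $0$ and $n-1$, and these correspond to stems $t-s=0$ and $n-1$.

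I expect the main obstacle to be the injectivity of $D_n$, that is, showing precisely that the boundary edge maps kill no classes. The filtration-zero terms have to be tracked carefully: the degree-$n$ coordinate of $\Sigma^n\A$ must be seen to map to the surviving class in $\Ext^{1,n}(C_n,\F)$ rather than to cancel against part of $\Ext(\Sigma^{-1}K_n,\F)$. I would handle this by writing out the two six-term exact sequences in filtrations $0$ and $1$ and checking that the images of $\Ext^0(\Sigma^n\A,\F)$ and $\Ext^0(\A,\F)$ are exactly these two residual classes, as in the proof of Lemma~\ref{lem:Fn-boundaries}.
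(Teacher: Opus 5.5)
Your proposal is correct and follows the paper's proof: Theorem~\ref{thm:augmentation-resolution} supplies the completed resolution, Lemma~\ref{lem:mapping-fiber-secondary-ext} reduces the problem to $\Coker(D_n)\oplus\Ker(D_n)$, and Lemma~\ref{lem:Fn-boundaries} computes these two pieces. Your explicit check that $\Ker(D_n)=0$ makes explicit what the paper compresses into ``an isomorphism on all summands except the two displayed classes'': the restrictions $\Hom_\A(\Sigma^n\A,\F)\to\Hom_\A(K_n,\F)$ and $\Hom_\A(\A,\F)\to\Hom_\A(I_n,\F)$ vanish because $\Sq^n\neq0$ and because $I_n$ lies in degrees $\geq n>0$. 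Only your side remark on $\pi_*F_n$ needs adjusting for $n=1$, where $F_1\simeq H\Z/4$ has $\pi_0\cong\Z/4$.
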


\begin{proof}
By Theorem~\ref{thm:augmentation-resolution}, the secondary mapping-fiber data for $\widetilde{\Sq}^{n}:\Sigma^n\HH\to\HH$ extend to a secondary free resolution of $\HB F_n$ whose primary shadow is \eqref{eq:Fn-ses}.  By Lemma~\ref{lem:mapping-fiber-secondary-ext}, its first secondary differential on the primary Ext description is the composite \eqref{eq:Dn-shift}.  By Lemma~\ref{lem:Fn-boundaries}, this differential is an isomorphism on all summands except the two displayed classes in $\Ext(C_n)$.  Therefore the secondary Ext group is precisely $\F\oplus\Sigma^{1,n}\F$.
\end{proof}

\begin{corollary}
Under the Baues--Jibladze comparison theorem \cite[Theorem~7.3]{BauesJibladze2006}, Theorem~\ref{thm:Fn} identifies Bruner's $E_3$-term for $F_n$ in \cite[Theorem~3.1]{Bruner2026}.
\end{corollary}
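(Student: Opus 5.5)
The corollary should follow by chaining Theorem~\ref{thm:Fn} with the Baues--Jibladze comparison theorem and then comparing bidegree by bidegree with Bruner's answer.

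First, I would recall \cite[Theorem~7.3]{BauesJibladze2006} in the form needed here. For a spectrum $X$ of finite type with bounded-below cohomology, the $E_3$-term of the mod-$2$ Adams spectral sequence is
\[
E_3^{s,t}(X)\cong\sExt^{s,t}(\HB X,\F),
\]
and the indexing satisfies $E_2^{s,t}=\Ext_{\A}^{s,t}(H^*X,\F)$ with stem $t-s$. The fiber $F_n$ is connective and of finite type, because it is the fiber of a map between finite-type Eilenberg--Mac Lane spectra. So the hypotheses hold. The secondary module $\HB F_n$ entering the theorem is exactly the augmented secondary module resolved in Theorem~\ref{thm:augmentation-resolution}. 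By the comparison theorem, this resolution can be used to compute $E_3(F_n)$.

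Second, I would substitute Theorem~\ref{thm:Fn}. This gives $E_3(F_n)\cong\F$ in bidegree $(0,0)$ and $\F$ in bidegree $(1,n)$, with everything else zero. These two classes sit in stems $0$ and $n-1$, with Adams filtrations $0$ and $1$. I would check that they correspond to $\pi_0F_n\cong\Z/2$, detected by the unit of $H$, and to $\pi_{n-1}F_n\cong\Z/2$, coming from $\Sigma^{-1}\Sigma^nH$ through the boundary map. This matches the long exact homotopy sequence of the fiber. Bruner's \cite[Theorem~3.1]{Bruner2026} states that $E_3=E_\infty$ for $F_n$ consists of exactly these two classes. The identification then follows once the two indexing conventions are aligned.

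The main obstacle is bookkeeping. Theorem~\ref{thm:Fn} records the second class as $\Sigma^{1,n}\F$, so I must confirm that its internal degree is $t=n$ rather than $n-1$, given the shift $\Ext^{s,t}(\Sigma^{-1}K_n)=\Ext^{s,t+1}(K_n)$ in \eqref{eq:Dn-shift}. I would do this directly. The class in $\Ext^{1,n}(C_n,\F)$ is represented by the generator $e_n$ of $\Sigma^n\A$, of degree $n$, acting through the extension $0\to I_n\to\A\to C_n\to0$. Its bidegree is therefore $(1,n)$, and its stem is $n-1$. This agrees with Bruner's table and completes the identification.
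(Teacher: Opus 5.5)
Your proposal is correct and follows the paper's own proof. The paper applies \cite[Theorem~7.3]{BauesJibladze2006} to the secondary Ext computed in Theorem~\ref{thm:Fn} and reads off Bruner's two classes in bidegrees $(0,0)$ and $(1,n)$. Your extra checks of the finite-type hypothesis, and of the placement of the second class in the $C_n$-summand, where the $\Sigma^{-1}K_n$ shift does not apply, are consistent with Lemma~\ref{lem:Fn-boundaries}.

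One small slip, in the optional homotopy sanity check only: for $n=1$ both classes lie in stem $0$ and assemble to $\pi_0F_1\cong\Z/4$, since $F_1\simeq H\Z/4$. So the separate statements $\pi_0F_n\cong\Z/2$ and $\pi_{n-1}F_n\cong\Z/2$ hold only for $n\geq2$. This does not affect the identification itself.
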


\begin{proof}
Baues--Jibladze identify secondary Ext with the $E_3$-term of the Adams spectral sequence for finite type spectra \cite[Theorem~7.3]{BauesJibladze2006}.  Applying this to the secondary Ext group computed in Theorem~\ref{thm:Fn} gives exactly the two classes appearing in Bruner's calculation \cite[Theorem~3.1]{Bruner2026}.
\end{proof}

\section{The secondary Ext calculation for the integral fiber}\label{sec:integral-fiber}

The preceding section treated the fiber of a map from $H$.  The same mapping-fiber Ext model from Lemma~\ref{lem:mapping-fiber-secondary-ext} will be used here.  We now add the integral cohomology relation $\Sq^1=0$, which is the feature responsible for the $h_0$-tower in Bruner's integral-fiber calculation \cite[Theorem~4.1]{Bruner2026}.  The $h_0$-tower is obtained from the secondary Ext of the displayed $\Sq^1$ resolution of $\HHZ$.

Let
\[
F_{n\Z}=\operatorname{fib}\left(\HZ\xrightarrow{\Sq^n}\Sigma^nH\right),
\qquad n\geq2.
\]
The primary map is
\[
\Sigma^n\A\longrightarrow\A/\A\Sq^1,
\qquad
1\longmapsto\overline{\Sq^n}.
\]
Let its kernel, image, and cokernel be denoted by $K_{n\Z}$, $I_{n\Z}$, and $C_{n\Z}$.  The secondary mapping-fiber complex is obtained by adjoining the column $\widetilde{\Sq}^{n}$ to the $\widetilde{\Sq}^{1}$ resolution of $\HHZ$, and by adjoining all first row syzygies involving this new column.  The first row is
\[
C_0^{(n\Z)}=\B c_0,
\qquad
C_1^{(n\Z)}=\Sigma\B e_1\oplus\Sigma^n\B e_n,
\]
with
\[
d_1(e_1)=\widetilde\Sq^1c_0,
\qquad
d_1(e_n)=\widetilde\Sq^nc_0.
\]
Let $q_2$ denote the integral syzygy $\Sq^1e_1$.  Choose a homogeneous generating set $\mathcal R_{n\Z}^{\mathrm{row}}$ for the remaining row syzygies of $[\Sq^1,\Sq^n]$ modulo the submodule generated by $q_2$.  For example, when $n=2$ the syzygy
\[
\Sq^3e_1+\Sq^2e_2
\]
is present, with null-track $u[2,2]$.  The second stage is
\[
C_2^{(n\Z)}=
\Sigma^2\B q_2\oplus
\bigoplus_{\rho\in\mathcal R_{n\Z}^{\mathrm{row}}}\Sigma^{|\rho|}\B a_\rho,
\]
with
\[
d_2(q_2)=\widetilde\Sq^1e_1,
\qquad
\Gamma_1(q_2)=H_{1,1},
\]
and
\[
d_2(a_\rho)=\widetilde\rho,
\qquad
\Gamma_1(a_\rho)=H_\rho.
\]
The visible $\Sq^1$ tower continues on the $q$-generators, with
\[
d_q(q_q)=\widetilde\Sq^1q_{q-1}\quad(q\geq3),
\]
and with track $H_{1,1}$ for each adjacent $\Sq^1\Sq^1$ composite.  The complete secondary free resolution is obtained only after the Baues--Jibladze recursive completion has also killed the row syzygies involving $e_n$.

\begin{lemma}\label{lem:HZ-ext}
The secondary Ext group of $\HHZ$ is
\[
\sExt^{*,*}(\HHZ,\F)\cong\F[h_0],
\qquad |h_0|=(1,1).
\]
\end{lemma}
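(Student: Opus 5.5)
We use the displayed standard secondary resolution of $\HHZ$,
\[
\cdots\xrightarrow{\widetilde\Sq^1}\Sigma^2\B q_2\xrightarrow{\widetilde\Sq^1}\Sigma\B q_1\xrightarrow{\widetilde\Sq^1}\B q_0\longrightarrow\HHZ,
\]
with every adjacent null track equal to $H_{1,1}=u[1,1]$. First we check that this is a genuine secondary free resolution in the sense of Baues--Jibladze. Coherence $d_{r-1}\Gamma_r=\Gamma_{r-1}d_{r+1}$ reduces on each generator to the equality $\Sq^1H_{1,1}=H_{1,1}\Sq^1$, already recorded in Section~\ref{sec:explicit-fibers} as the Mac Lane coherence for $\Sq^1\Sq^1\Sq^1$. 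For secondary exactness, Theorem~\ref{thm:recursive-completion} reduces us to primary exactness together with the statement that no new secondary cycles need to be adjoined.

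On the primary side, $\A\xleftarrow{\Sq^1}\Sigma\A\xleftarrow{\Sq^1}\Sigma^2\A\leftarrow\cdots$ is exact and resolves $\A/\A\Sq^1$, because the kernel of right multiplication by $\Sq^1$ on $\A$ is exactly $\A\Sq^1$. This follows from $\A$ being free as a right module over the exterior algebra $E(\Sq^1)$, for instance via the Milnor basis or admissible words ending or not ending in $\Sq^1$. The secondary cycles in each degree are then pairs $(x,\Theta)$ with $x=a\widetilde\Sq^1 q_n$. Any two tracks for the same $x$ differ by an element of $\Sigma\A$ in degree one of the pair module, and these differences are absorbed by the $\Sigma\A$-part of the next free module. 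So the recursion adjoins nothing beyond $q_{n+1}$, up to secondary boundaries.

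Next we apply $\Hom_\B(-,\F)$ and compute the secondary Ext as the $E_3$-type homology in the Baues--Jibladze sense, i.e.\ the homology of the induced secondary cochain complex. The primary cochain complex is $\F\cdot q_n^\vee$ in bidegree $(n,n)$ with zero differential, since $\Sq^1$ has positive degree. This already gives $\Ext_\A(\A/\A\Sq^1,\F)\cong\F[h_0]$ as a vector space, with $q_n^\vee$ corresponding to $h_0^n$. The secondary differential has bidegree $(2,1)$, so it maps $(n,n)$ to $(n+2,n+1)$. That target is zero because all classes lie on the line $s=t$. Therefore $\sExt$ equals the primary Ext.

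Finally we identify the multiplicative structure, namely that $q_1^\vee$ is $h_0$ and its powers are nonzero. This can be checked from the Yoneda composite of the chain maps shifting the $\Sq^1$ tower. We expect the main obstacle to be the secondary exactness step, that is, showing the extra track ambiguity is absorbed without adjoining new generators. If that argument is delicate, we would instead invoke the Baues--Jibladze comparison \cite[Theorem~7.3]{BauesJibladze2006} together with the sparsity argument above: the sparsity alone forces the answer once any secondary resolution exists, and Theorem~\ref{thm:recursive-completion} guarantees one with primary shadow the $\Sq^1$ tower.
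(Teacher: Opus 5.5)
Your proposal is correct and follows essentially the same route as the paper: take the displayed $\widetilde\Sq^1$ tower with tracks $H_{1,1}$ as the secondary resolution of $\HHZ$, apply $\Hom_\B(-,\F)$, and note that all positive operations die, leaving one class in each bidegree $(n,n)$ and hence $\F[h_0]$. Your sparsity argument (the bidegree-$(2,1)$ secondary differential must vanish because every class lies on the line $s=t$) and your checks of coherence and exactness of the tower only make explicit what the paper's short proof leaves implicit.
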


\begin{proof}
The displayed $\widetilde{\Sq}^{1}$ resolution is a secondary lift of the standard free resolution of $\A/\A\Sq^1$.  Applying $\Hom_{\B}(-,\F)$ kills all positive Steenrod operations and leaves one generator in every cohomological degree, shifted internally by the same amount.  The product structure is the polynomial algebra on the degree $(1,1)$ generator $h_0$.
\end{proof}

\begin{lemma}\label{lem:FnZ-D}
The first secondary differential for the mapping fiber of $\widetilde{\Sq}^{n}:\Sigma^n\HH\to\HHZ$ has kernel zero on the suspended $K_{n\Z}$-part and cokernel
\[
\F[h_0]\oplus\Sigma^{1,n}\F.
\]
\end{lemma}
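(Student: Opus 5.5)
We follow the pattern of Lemma~\ref{lem:Fn-boundaries}, with the free module $\A$ replaced by $\A/\A\Sq^1$. Apply $\Ext_{\A}(-,\F)$ to the first two short exact sequences
\[
0\to K_{n\Z}\to\Sigma^n\A\to I_{n\Z}\to0,
\qquad
0\to I_{n\Z}\to\A/\A\Sq^1\to C_{n\Z}\to0,
\]
and analyze the composite $D=\partial_{CI}\partial_{IK}$ given by Lemma~\ref{lem:mapping-fiber-secondary-ext}.

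\emph{First connecting map.} The middle term $\Sigma^n\A$ is free, so exactly as in the proof of Lemma~\ref{lem:Fn-boundaries} the map $\partial_{IK}$ is an isomorphism $\Ext^{s,t}(K_{n\Z})\to\Ext^{s+1,t}(I_{n\Z})$ for all $s\geq0$. In filtration zero this uses that the restriction $\Hom(\Sigma^n\A,\F)\to\Hom(K_{n\Z},\F)$ vanishes. The reason is that an element of $K_{n\Z}$ with unit coefficient on $e_n$ would force $\overline{\Sq^n}$ to be cancelled in $(\A/\A\Sq^1)^n$ by positive multiples. The cokernel of $\partial_{IK}$ is therefore the single class $\Ext^{1,n}(I_{n\Z})$ coming from the generator $e_n$.

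\emph{Second connecting map.} This is the new ingredient. Here $\Ext_{\A}(\A/\A\Sq^1,\F)\cong\F[h_0]$ is nonzero in every filtration. This is the primary shadow of Lemma~\ref{lem:HZ-ext}, computed from the $\Sq^1$ tower. The long exact sequence
\[
\cdots\to\Ext^{s}(C_{n\Z})\to\Ext^{s}(\A/\A\Sq^1)\to\Ext^{s}(I_{n\Z})\xrightarrow{\partial_{CI}}\Ext^{s+1}(C_{n\Z})\to\Ext^{s+1}(\A/\A\Sq^1)\to\cdots
\]
then contains the $h_0$-tower as an extra summand not in the image of $\partial_{CI}$. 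To see this, we show that the restriction $\Ext^{s}(\A/\A\Sq^1)\to\Ext^{s}(I_{n\Z})$ is zero, so that the long exact sequence splits into short exact sequences
\[
0\to\Ext^{s}(I_{n\Z})\to\Ext^{s+1}(C_{n\Z})\to\Ext^{s+1}(\A/\A\Sq^1)\to0.
\]
The tower classes live in internal degree $t=s$. Meanwhile $I_{n\Z}$ is concentrated in degrees $\geq n\geq2$, and its minimal resolution increases internal degree at least as fast as filtration. A degree count in the range $t-s\le n-2$ should then kill this restriction. In that range $\Ext^{s,t}(I_{n\Z})=0$, provided we also check that the bottom class of $I_{n\Z}$ in degree $n$ does not pair with $h_0^{s}$. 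This requires $\overline{\Sq^n}\neq0$ and $\overline{\Sq^n}\notin\Sq^1(\cdot)$ in low degrees. We expect this step to be the main obstacle. The tower is not free, so we cannot argue as for $F_n$. If the degree count is insufficient, we would instead compare with the $\Sq^1$ resolution directly: the tower classes are dual to the generators $q_s$, which do not appear in the image of $d$ from the $e_n$-part. This should yield injectivity of $\partial_{CI}$ with cokernel $\F[h_0]$ lifted to $\Ext(C_{n\Z})$.

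\emph{Assembly.} Combining the two steps, $D$ is injective on $\Ext(\Sigma^{-1}K_{n\Z})$, which gives kernel zero. Its cokernel in $\Ext(C_{n\Z})$ consists of two pieces. The first is the lift of $\F[h_0]$, with the class $\Ext^{0,0}$ as its bottom. The second is the image under $\partial_{CI}$ of the residual $e_n$-class in $\Ext^{0,n}(I_{n\Z})$, landing in bidegree $(1,n)$. The bidegree shift follows \eqref{eq:Dn-shift}, which yields $\F[h_0]\oplus\Sigma^{1,n}\F$. We would check the case $n=2$ against the displayed syzygy $\Sq^3e_1+\Sq^2e_2$ with track $u[2,2]$ as a sanity test: its dual class must lie in the image of $D$ and not in the kernel.
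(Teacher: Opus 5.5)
Your proposal is correct and follows the same route as the paper. Freeness of $\Sigma^n\A$ together with $\overline{\Sq^n}\neq0$ makes $\partial_{IK}$ an isomorphism in every filtration, and $\partial_{CI}$ is injective with cokernel $\F[h_0]$, so the only extra survivor is $\partial_{CI}$ of the class in $\Ext_{\A}^{0,n}(I_{n\Z},\F)$ (not $\Ext^{1,n}$, as you write once), landing in bidegree $(1,n)$.

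The step you flag as the main obstacle is already settled by your own degree count: $I_{n\Z}$ is concentrated in degrees $\geq n$, so $\Ext_{\A}^{s,t}(I_{n\Z},\F)=0$ for $t-s<n$, while the $h_0$-tower sits in stem $0$. No extra pairing check or fallback is needed, and this argument supplies the injectivity of $\partial_{CI}$ that the paper uses without justification.
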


\begin{proof}
The relevant primary exact sequences are
\[
0\to K_{n\Z}\to\Sigma^n\A\to I_{n\Z}\to0,
\qquad
0\to I_{n\Z}\to\A/\A\Sq^1\to C_{n\Z}\to0.
\]
The first sequence has free middle term $\Sigma^n\A$.  Hence the connecting homomorphism
\[
\partial_{IK}:\Ext_{\A}^{s,t}(K_{n\Z},\F)\longrightarrow
\Ext_{\A}^{s+1,t}(I_{n\Z},\F)
\]
is an isomorphism for $s>0$ and is surjective onto $\Ext_{\A}^{s+1,t}(I_{n\Z},\F)$ for $s\geq0$.  Thus the two-extension differential sees the connecting map $\partial_{CI}$ only on the part of $\Ext(I_{n\Z})$ in Adams filtration at least $1$.

The filtration-zero coordinate on the new $e_n$ column must again be treated separately.  Since $n\ge2$ and the map under consideration has nonzero component $\overline{\Sq^n}\in\A/\A\Sq^1$, a kernel element with unit coefficient on $e_n$ would force $\overline{\Sq^n}=0$ in $\A/\A\Sq^1$, which is false.  Hence the coordinate functional from the free $e_n$ summand restricts trivially to $K_{n\Z}$.

The ordinary Ext calculation for the middle term is
\[
\Ext_{\A}^{*,*}(\A/\A\Sq^1,\F)\cong\F[h_0],
\qquad |h_0|=(1,1),
\]
computed by the primary $\Sq^1$ resolution underlying Lemma~\ref{lem:HZ-ext}.  The long exact sequence for
\[
0\to I_{n\Z}\to\A/\A\Sq^1\to C_{n\Z}\to0
\]
therefore leaves the $h_0$-tower in the cokernel of $\partial_{CI}$.  The filtration-zero class in $\Ext_{\A}^{0,n}(I_{n\Z},\F)$ dual to the cyclic generator $\overline{\Sq^n}$ maps under $\partial_{CI}$ to a class in bidegree $(1,n)$.  Since $D_{n\Z}=\partial_{CI}\partial_{IK}$ has Adams-filtration degree $2$, this filtration-one class cannot lie in its image.  All positive-stem classes in filtration at least $2$ lie in the image because $\partial_{IK}$ is surjective onto $\Ext^{\geq1}(I_{n\Z},\F)$ and $\partial_{CI}$ is injective there.  Hence the cokernel is $\F[h_0]\oplus\Sigma^{1,n}\F$, and the suspended $K_{n\Z}$-part has zero kernel.
\end{proof}

\begin{theorem}\label{thm:FnZ}
For $n\geq2$, corresponding to Bruner's integral-fiber calculation \cite[Theorem~4.1]{Bruner2026},
\[
\sExt^{*,*}(\HB F_{n\Z},\F)
\cong
\F[h_0]\oplus\Sigma^{1,n}\F.
\]
\end{theorem}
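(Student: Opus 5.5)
The proof of Theorem~\ref{thm:FnZ} will follow the same pattern as Theorem~\ref{thm:Fn}, with the free module $\A$ replaced by $\A/\A\Sq^1$ and with the integral $\Sq^1$ tower carried along. The first step is to invoke Theorem~\ref{thm:augmentation-resolution} for the fiber sequence $F_{n\Z}\to\HZ\xrightarrow{\Sq^n}\Sigma^nH$. The displayed first row $[\widetilde\Sq^1,\widetilde\Sq^n]$ consists of the tower generators $q_q$ with tracks $H_{1,1}$, together with the row syzygies $a_\rho$, $\rho\in\mathcal R_{n\Z}^{\mathrm{row}}$, whose tracks $H_\rho$ come from Proposition~\ref{prop:coherence}. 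After Baues--Jibladze completion (Theorem~\ref{thm:recursive-completion}), this data becomes a secondary free resolution of $\HB F_{n\Z}$. Its primary shadow is filtered by
\[
0\to K_{n\Z}\to\Sigma^n\A\to I_{n\Z}\to0,\qquad
0\to I_{n\Z}\to\A/\A\Sq^1\to C_{n\Z}\to0,
\]
\[
0\to C_{n\Z}\to H^*F_{n\Z}\to\Sigma^{-1}K_{n\Z}\to0.
\]

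The second step applies Lemma~\ref{lem:mapping-fiber-secondary-ext} to this resolution. It gives
\[
\sExt^{*,*}(\HB F_{n\Z},\F)\cong\Coker(D_{n\Z})\oplus\Ker(D_{n\Z}),
\qquad
D_{n\Z}=\partial_{CI}\partial_{IK},
\]
where $D_{n\Z}$ maps $\Ext_{\A}(\Sigma^{-1}K_{n\Z},\F)$ to $\Ext_{\A}(C_{n\Z},\F)$ with the bidegree shift of \eqref{eq:Dn-shift}. The third step quotes Lemma~\ref{lem:FnZ-D}: the kernel on the suspended $K_{n\Z}$-part is zero, and the cokernel is $\F[h_0]\oplus\Sigma^{1,n}\F$. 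Substituting these two facts gives the stated answer.

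In the last step I would check that the $h_0$-tower really appears with multiplicative generator in bidegree $(1,1)$ and is not a shifted copy. For this I would compare with Lemma~\ref{lem:HZ-ext} through the map of secondary resolutions induced by $F_{n\Z}\to\HZ$. That map is the inclusion of the $\Sq^1$ tower into the completed complex, which sends the tower generators $q_q$ to themselves and is compatible with the tracks $H_{1,1}$. It therefore induces the edge map $\F[h_0]\to\Coker(D_{n\Z})$, and this map is injective because, by Lemma~\ref{lem:FnZ-D}, no tower class lies in the image of $D_{n\Z}$.

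The main obstacle I expect is justifying that the recursively adjoined higher cycles and the extra Adem syzygies (such as the $n=2$ syzygy $\Sq^3e_1+\Sq^2e_2$ with track $u[2,2]$) contribute nothing beyond what Lemma~\ref{lem:mapping-fiber-secondary-ext} records. In particular, the surviving class of degree $(1,n)$ must not be cancelled by a secondary boundary coming from these syzygies. I would handle this as in Lemma~\ref{lem:Fn-boundaries}. The coordinate functional on $e_n$ restricts to zero on $K_{n\Z}$ because $\overline{\Sq^n}\neq0$ in $\A/\A\Sq^1$ for $n\ge2$, and $D_{n\Z}$ raises Adams filtration by two, so the class of degree $(1,n)$ cannot lie in its image. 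The final step is then to apply the Baues--Jibladze comparison theorem \cite[Theorem~7.3]{BauesJibladze2006} and match the result with \cite[Theorem~4.1]{Bruner2026}.
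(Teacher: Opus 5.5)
Your proposal follows the paper's proof: Theorem~\ref{thm:augmentation-resolution} supplies the completed secondary resolution, Lemma~\ref{lem:mapping-fiber-secondary-ext} reduces $\sExt^{*,*}(\HB F_{n\Z},\F)$ to $\Coker(D_{n\Z})\oplus\Ker(D_{n\Z})$, and Lemma~\ref{lem:FnZ-D} gives $\Ker(D_{n\Z})=0$ and $\Coker(D_{n\Z})\cong\F[h_0]\oplus\Sigma^{1,n}\F$. Your extra $h_0$-tower check is unnecessary, since Lemma~\ref{lem:FnZ-D} already places the tower in bidegrees $(k,k)$; as written it also has the variance reversed, because the inclusion of the $\Sq^1$ tower induces $\sExt^{*,*}(\HB F_{n\Z},\F)\to\sExt^{*,*}(\HHZ,\F)\cong\F[h_0]$, so the property to check would be surjectivity of that map rather than injectivity of a map out of $\F[h_0]$.
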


\begin{proof}
By Theorem~\ref{thm:augmentation-resolution}, the cone of $\widetilde{\Sq}^{n}:\Sigma^n\HH\to\HHZ$ with the $\Sq^1$ tracks included extends to a secondary free resolution of $\HB F_{n\Z}$.  Lemma~\ref{lem:mapping-fiber-secondary-ext} identifies the cochain model, and Lemma~\ref{lem:FnZ-D} identifies the homology of the resulting secondary Ext differential.  The surviving summands are exactly $\F[h_0]$ and $\Sigma^{1,n}\F$.
\end{proof}

\begin{corollary}
Under the Baues--Jibladze comparison theorem \cite[Theorem~7.3]{BauesJibladze2006}, Theorem~\ref{thm:FnZ} identifies Bruner's $E_3$-term for $F_{n\Z}$ in \cite[Theorem~4.1]{Bruner2026}.
\end{corollary}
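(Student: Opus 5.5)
The statement to prove is the corollary identifying the secondary Ext group of Theorem~\ref{thm:FnZ} with Bruner's $E_3$-term for $F_{n\Z}$. The plan is to mirror the proof of the corresponding corollary for $F_n$. Three inputs are needed.

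\emph{The comparison theorem.} First we check that the Baues--Jibladze comparison theorem \cite[Theorem~7.3]{BauesJibladze2006} applies to $F_{n\Z}$. This requires $F_{n\Z}$ to be a spectrum of finite type. That follows from the fiber sequence $F_{n\Z}\to\HZ\to\Sigma^nH$: both $H^*\HZ\cong\A/\A\Sq^1$ and $H^*\Sigma^nH$ are finite in each degree. The long exact sequence in cohomology then shows $H^*F_{n\Z}$ is degreewise finite, and it is bounded below. The comparison theorem therefore gives
\[
E_3^{s,t}(F_{n\Z})\cong\sExt^{s,t}(\HB F_{n\Z},\F).
\]

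\emph{Resolution and computation.} Next we use Theorem~\ref{thm:augmentation-resolution}. The completed secondary mapping-fiber complex is a genuine secondary free resolution of $\HB F_{n\Z}$, so the secondary Ext it computes is the derived-functor object appearing in the comparison theorem. Theorem~\ref{thm:FnZ} then evaluates this group as $\F[h_0]\oplus\Sigma^{1,n}\F$.

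\emph{Matching with Bruner.} Finally we compare with \cite[Theorem~4.1]{Bruner2026}, which lists the $E_3=E_\infty$ term as an $h_0$-tower on the integral class in bidegree $(0,0)$ together with a single class in bidegree $(1,n)$. The point to verify is that the bidegree conventions agree. Bruner's $(s,t)$ must match the paper's $\Sigma^{1,n}$ shift, with $|h_0|=(1,1)$ in both conventions. The surviving class should lie in stem $n-1$ and filtration $1$, consistent with $\pi_{n-1}F_{n\Z}\cong\Z/2$ coming from the fiber sequence. We then check that the $h_0$-multiplication on the tower agrees, which holds because both towers come from the $\Sq^1$ resolution of $\A/\A\Sq^1$.

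The main obstacle is this last convention check. The suspension $\Sigma^{-1}K$ in Lemma~\ref{lem:mapping-fiber-secondary-ext} shifts internal degrees, so one must confirm that the class $\Sigma^{1,n}\F$, which arises from the filtration-zero class dual to $\overline{\Sq^n}$ after $\partial_{CI}$, lands in the same $(s,t)$ as Bruner's class rather than being off by one in $t$. I would settle this by tracking the class through the connecting map explicitly and comparing its stem with $\pi_{n-1}F_{n\Z}$.
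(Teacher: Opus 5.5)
Your proposal is correct and follows the same route as the paper: apply the Baues--Jibladze comparison theorem (their Theorem~7.3) to identify $\sExt^{*,*}(\HB F_{n\Z},\F)$ with the $E_3$-term, then match the group $\F[h_0]\oplus\Sigma^{1,n}\F$ of Theorem~\ref{thm:FnZ} with Bruner's Theorem~4.1. The paper leaves your finite-type and bidegree checks implicit, and your convention worry resolves at once: the surviving $(1,n)$ class lies in $\Ext_{\A}^{1,n}(C_{n\Z},\F)$, and $C_{n\Z}$ sits in $H^*F_{n\Z}$ unshifted, so the $\Sigma^{-1}K$ suspension only affects the kernel part, which is zero here.
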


\begin{proof}
The comparison theorem \cite[Theorem~7.3]{BauesJibladze2006} identifies the secondary Ext group computed above with the Adams $E_3$-term.  The displayed group is the group calculated by Bruner for the same integral fiber \cite[Theorem~4.1]{Bruner2026}.
\end{proof}

\begin{remark}
For $n=1$ the class $\overline{\Sq^1}$ is zero in $\A/\A\Sq^1$, so the primary map in this section is not the nonzero map used in the displayed calculation.  The theorem is therefore stated for $n\geq2$.
\end{remark}

\section{The secondary Ext calculation for the infinite fiber}\label{sec:infinite-fiber}

The previous two sections handle one square at a time.  We again use Lemma~\ref{lem:mapping-fiber-secondary-ext} to identify the secondary cochain model.  We now treat Bruner's main fiber \cite[Section~5]{Bruner2026}.  All even positive squares appear simultaneously.  The completed secondary complex contains the additional Adem syzygies described in Section~\ref{sec:explicit-fibers}; nevertheless the kernel of the two-extension differential sees only the triangular syzygies with a unit coefficient in a new even column.

Let
\[
F=\operatorname{fib}\left(\HZ\longrightarrow\prod_{i>0}\Sigma^{2i}H\right).
\]
In each degree, the cohomology of the product is the finite direct sum
\[
H^d\left(\prod_{i>0}\Sigma^{2i}H\right)
\cong
\bigoplus_{0<2i\leq d}\A^{d-2i}.
\]
The primary map is
\[
\nu:\bigoplus_{i>0}\Sigma^{2i}\A\longrightarrow \A/\A\Sq^1,
\qquad e_{2i}\longmapsto\overline{\Sq^{2i}}.
\]
Let $K_F=\Ker(\nu)$, $I_F=\im(\nu)$, and $C_F=\Coker(\nu)$.  The secondary free complex has first matrices and tracks as displayed in Remark~\ref{rem:first-matrices}.  In particular, it contains the integral generator $q_2$, the triangular generators $s_m$ for even nonpowers $m$, and the additional Adem-kernel generators $a_\rho$, including the generator attached to $z_4=\Sq^2e_2+\Sq^3e_1$.

\begin{lemma}\label{lem:CF}
The cokernel $C_F$ is the trivial $\A$-module $\F$.
\end{lemma}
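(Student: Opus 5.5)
The plan is to show that the image $I_F$ of $\nu$ is exactly the augmentation ideal $\overline{\A}_{>0}=(\A/\A\Sq^1)_{>0}$. Then $C_F=(\A/\A\Sq^1)/I_F$ is one-dimensional in degree $0$, spanned by the class of $1$, with trivial action. The containment $I_F\subseteq(\A/\A\Sq^1)_{>0}$ is immediate, since every generator $\overline{\Sq^{2i}}$ has positive degree and $\nu$ is $\A$-linear. The degree-zero part of $I_F$ is therefore zero, so $C_F^0\cong\F$. Since $C_F$ is a quotient of the cyclic module generated in degree $0$, once the positive part of the cokernel vanishes it is automatically the trivial module $\F$.

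For the reverse containment, I would argue from generators. As a left $\A$-module, $(\A/\A\Sq^1)_{>0}$ is spanned by the classes $\overline{x}$ with $x\in\A_{>0}$. By Proposition~\ref{prop:powers}, $\A$ is generated as an algebra by the $\Sq^{2^j}$, $j\ge0$. Hence every positive-degree admissible monomial, and every positive-degree element, lies in the right-ideal sum $\sum_{j\ge0}\A\Sq^{2^j}$; one simply peels off the rightmost generator in any product expression. Modulo $\A\Sq^1$, the $j=0$ summand dies, so $\overline{x}\in\sum_{j\ge1}\A\,\overline{\Sq^{2^j}}$. Each $\overline{\Sq^{2^j}}=\nu(e_{2^j})$ with $2^j$ even and positive, so it lies in $I_F$. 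Hence $(\A/\A\Sq^1)_{>0}\subseteq I_F$.

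The only point needing care is the statement that $\A_{>0}=\sum_j\A\Sq^{2^j}$ as a right-ideal sum, rather than merely an algebra-generation statement. I would justify it by induction on degree. Any positive-degree element is a sum of products $g_1\cdots g_r$ of generators with $r\ge1$, and such a product lies in $\A g_r$. This is the same rightmost-factor argument already used in the proof of Theorem~\ref{thm:complete-Hm}, so no new computation is required. A degree check confirms consistency: in degree $1$, $\overline{\Sq^1}=0$, and indeed $(\A/\A\Sq^1)^1=0$, matching the vanishing of $C_F^1$.
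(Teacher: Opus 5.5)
Your proof is correct, but it establishes that the positive part of $\A/\A\Sq^1$ lies in the image by a different mechanism than the paper's.

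The paper works directly in the admissible basis and does a case split on the last factor of an admissible monomial $w$:
\begin{itemize}
\item if $w$ ends in $\Sq^1$, it is already zero in $\A/\A\Sq^1$;
\item if $w$ ends in $\Sq^{2r}$, it is an $\A$-multiple of $\overline{\Sq^{2r}}=\nu(e_{2r})$;
\item if $w$ ends in an odd $\Sq^{2r+1}$ with $r\geq1$, the single Adem relation $[1,2r]$, i.e.\ $\Sq^{2r+1}=\Sq^1\Sq^{2r}$, makes it a multiple of $\overline{\Sq^{2r}}$ as well.
\end{itemize}
That argument uses all the even columns, including the nonpowers. It needs nothing beyond the admissible basis and one explicit Adem relation.

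You instead use generation of $\A$ by the $\Sq^{2^j}$ and peel off the rightmost generator. This gives a sharper conclusion: the power-of-two columns $e_{2^j}$, $j\geq1$, alone already generate the positive part of $\A/\A\Sq^1$. That fits the later triangular relations, which make each nonpower column redundant modulo lower power-of-two columns.

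The cost is a heavier input. Proposition~\ref{prop:powers} only says which single squares are indecomposable. To get algebra generation you also need that the $\Sq^i$ generate $\A$, plus an induction on degree, which is where your induction really belongs. The classical decomposition $\Sq^l\Sq^{2^k}=\Sq^{2^k+l}+\cdots$ for $0<l<2^k$ would make this step self-contained.

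Your observation that a quotient of a cyclic module generated in degree $0$ with vanishing positive part is $\F$ with trivial action spells out a step the paper leaves implicit.
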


\begin{proof}
It is enough to show that every positive-degree class in $\A/\A\Sq^1$ lies in the left $\A$-submodule generated by the images of the even squares $\Sq^{2i}$.  Let $w=\Sq^{a_1}\cdots\Sq^{a_r}$ be an admissible monomial of positive degree.  If the rightmost factor is $\Sq^1$, then $w$ is zero in $\A/\A\Sq^1$.  If the rightmost factor is $\Sq^{2r}$ with $r>0$, then $w$ lies in the left $\A$-submodule generated by the even column $\overline{\Sq^{2r}}$.  If the rightmost factor is $\Sq^{2r+1}$ with $r\geq1$, then the Adem relation $[1,2r]$ gives
\[
\Sq^1\Sq^{2r}+\Sq^{2r+1}=0,
\]
because the only summand in the relation is obtained from $k=0$.  Hence $\Sq^{2r+1}=\Sq^1\Sq^{2r}$ in $\A$, and $w$ again lies in the left $\A$-submodule generated by the even column $\overline{\Sq^{2r}}$.  Thus all positive-degree classes vanish in the cokernel, while the degree-zero class remains.  Therefore $C_F\cong\F$.
\end{proof}

\begin{lemma}\label{lem:IK-kernel-revised}
For the exact sequence
\[
0\to K_F\to M_F=\bigoplus_{i>0}\Sigma^{2i}\A e_{2i}\to I_F\to0,
\]
the connecting map
\[
\partial_{IK}:\Ext_{\A}^{s,t}(K_F,\F)\to\Ext_{\A}^{s+1,t}(I_F,\F)
\]
has kernel
\[
\Ker(\partial_{IK})
\cong
\bigoplus_{\substack{i>0\\ i\text{ not a power of }2}}\Sigma^{0,2i}\F.
\]
The class indexed by $i$ is represented by the dual of the triangular syzygy $r_{2i}$.  Additional Adem syzygies such as $\rho_4=\Sq^2e_2$ do not lie in this kernel.
\end{lemma}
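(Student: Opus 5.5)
The plan is to read off $\Ker(\partial_{IK})$ from the long exact $\Ext$ sequence of $0\to K_F\to M_F\to I_F\to 0$, using that $M_F$ is free. Since $\Ext^{s,*}_{\A}(M_F,\F)=0$ for $s>0$, the sequence gives an isomorphism $\partial_{IK}:\Ext^{s,t}_{\A}(K_F,\F)\cong\Ext^{s+1,t}_{\A}(I_F,\F)$ for $s\ge1$. In filtration zero it gives an exact segment
\[
\Hom_{\A}(M_F,\F)\xrightarrow{\ \mathrm{res}\ }\Hom_{\A}(K_F,\F)\xrightarrow{\ \partial_{IK}\ }\Ext^{1,*}_{\A}(I_F,\F)\to0 .
\]
Hence $\Ker(\partial_{IK})$ is concentrated in $s=0$ and equals the image of the restriction map. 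Here $\Hom_{\A}(M_F,\F)$ has basis the coordinate functionals $e_{2i}^{*}$ in bidegree $(0,2i)$, and $\Hom_{\A}(K_F,\F)$ is the dual of $K_F/\A_{>0}K_F$. So the lemma reduces to one claim: the restriction of $e_{2i}^*$ to $K_F$ is nonzero exactly when $i$ is not a power of $2$. Each such nonzero restriction is a single class in bidegree $(0,2i)$, and distinct $i$ live in distinct internal degrees, so no linear-independence issues arise.

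For $i$ not a power of $2$, I would invoke Theorem~\ref{thm:complete-Hm} to get the triangular syzygy $r_{2i}=e_{2i}+\sum c_{\alpha,j}\alpha e_{2^j}\in K_F$. Its $e_{2i}$-coefficient is the unit. Every element of $\A_{>0}K_F$ has all coefficients in $\A_{>0}$, so $e_{2i}^*$ vanishes on $\A_{>0}K_F$. Therefore $e_{2i}^*$ takes the value $1$ on the class of $r_{2i}$ in $K_F/\A_{>0}K_F$. This gives a nonzero restriction, represented by the dual of $r_{2i}$, as stated.

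For $i=2^{q-1}$, i.e.\ column $e_{2^q}$, suppose some $\rho\in K_F$ of internal degree $2^q$ had unit coefficient on $e_{2^q}$. Columns $e_{2i'}$ with $2i'>2^q$ must carry zero coefficients for degree reasons, and lower columns carry positive-degree coefficients $a_{i'}$. The condition $\nu(\rho)=0$ then reads
\[
\Sq^{2^q}+\sum_{2i'<2^q}a_{i'}\Sq^{2i'}=b\Sq^1\quad\text{in }\A .
\]
Every term other than $\Sq^{2^q}$ is decomposable, and this includes lower nonpower columns with $a_{i'}$ of positive degree. Passing to $\Q(\A)$ therefore contradicts Proposition~\ref{prop:powers}. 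Consequently $e_{2^q}^*$ restricts to zero, which extends the nonexistence half of Theorem~\ref{thm:complete-Hm} to arbitrary lower columns.

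The same coefficient argument disposes of the additional Adem syzygies. For example, $\rho_4=\Sq^2e_2$, $\rho_5$ and $\rho_8$ have all coefficients in positive degree, so every $e_{2i}^*$ vanishes on them. Their dual classes in $\Hom_{\A}(K_F,\F)$ are therefore not in the image of restriction, and they map nontrivially under $\partial_{IK}$.

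I expect the main obstacle to be the power-of-two case, specifically making sure that a hypothetical kernel element may involve all lower columns, including nonpower ones and the $\A\Sq^1$ ambiguity, rather than only the power-of-two columns treated in Theorem~\ref{thm:complete-Hm}. The decomposability argument in $\Q(\A)$ should handle this uniformly. A secondary point needing care is the identification $\Hom_{\A}(K_F,\F)\cong(K_F/\A_{>0}K_F)^\vee$ degreewise; this holds since $K_F$ is bounded below and of finite type.
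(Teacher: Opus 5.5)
Your proposal is correct and follows the paper's proof. Both arguments use freeness of $M_F$ to identify $\Ker(\partial_{IK})$ with the image of $\Hom_{\A}(M_F,\F)\to\Hom_{\A}(K_F,\F)$, then test each coordinate functional $e_{2i}^*$ for a syzygy with unit $e_{2i}$-coefficient via Theorem~\ref{thm:complete-Hm} and Proposition~\ref{prop:powers}. Your two refinements tighten steps the paper handles quickly: distinct $i$ sit in distinct internal degrees, which replaces the paper's upper-triangular independence argument, and you run the $\Q(\A)$ decomposability argument with arbitrary lower even columns rather than only the power-of-two columns treated in Theorem~\ref{thm:complete-Hm}.
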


\begin{proof}
Applying $\Ext_{\A}(-,\F)$ to the displayed exact sequence gives
\[
0\to\Ext_{\A}^{0,t}(I_F,\F)
\to
\bigoplus_{i>0}\Sigma^{0,2i}\F
\to
\Ext_{\A}^{0,t}(K_F,\F)
\xrightarrow{\partial_{IK}}
\Ext_{\A}^{1,t}(I_F,\F)
\to0,
\]
and, for $s>0$, isomorphisms
\[
\partial_{IK}:\Ext_{\A}^{s,t}(K_F,\F)
\xrightarrow{\cong}
\Ext_{\A}^{s+1,t}(I_F,\F),
\]
because $M_F$ is free.  Hence $\Ker(\partial_{IK})$ is exactly the image of
\[
\Hom_{\A}(M_F,\F)\longrightarrow\Hom_{\A}(K_F,\F).
\]
The coordinate functional dual to $e_{2i}$ evaluates a homogeneous syzygy by taking the augmentation of its $e_{2i}$-coefficient.  It is nonzero on $K_F$ precisely when there is a homogeneous syzygy of total degree $2i$ whose $e_{2i}$-coefficient has augmentation $1$.

Degree considerations force every other even column in such a syzygy to have degree strictly lower than $2i$: a higher column would require a coefficient of negative degree, and the coefficient of $e_{2i}$ has degree zero.  Therefore such a syzygy is equivalent to an expression for $\overline{\Sq^{2i}}$ in $\A/\A\Sq^1$ using lower even columns.  By Theorem~\ref{thm:complete-Hm} and Proposition~\ref{prop:powers}, this occurs exactly when $2i$ is not a power of $2$, equivalently when $i$ is not a power of $2$.  The corresponding syzygy is the triangular generator $r_{2i}$.

Ordering the even columns by degree makes the matrix of the restrictions of the coordinate functionals to the triangular generators upper triangular with unit diagonal on the nonpower columns.  Hence these restrictions are linearly independent and give exactly the displayed direct sum.  If $\rho$ is an Adem-kernel syzygy without a unit coefficient in any new even column, then every coordinate functional has augmentation zero on $\rho$.  For example, $\rho_4=\Sq^2e_2$ has $e_2$-coefficient $\Sq^2$, whose augmentation is zero.  Hence $\rho_4^\vee$ is not in the image of $\Hom_{\A}(M_F,\F)$ and is not in $\Ker(\partial_{IK})$.
\end{proof}

\begin{lemma}\label{lem:CI-cokernel-revised}
For the exact sequence
\[
0\to I_F\to\A/\A\Sq^1\to C_F\to0,
\]
the connecting map
\[
\partial_{CI}:\Ext_{\A}^{s,t}(I_F,\F)\to\Ext_{\A}^{s+1,t}(C_F,\F)
\]
is injective on the positive-stem part of $\Ext_{\A}^{*,*}(I_F,\F)$ and has image equal to the positive-stem part of $\Ext_{\A}^{*,*}(\F,\F)$.  More precisely, the image of $\Ext_{\A}^{0,*}(I_F,\F)$ is
\[
\bigoplus_{j>0}\F\{h_j\}\subset\Ext_{\A}^{1,*}(\F,\F),
\]
and the image of $\Ext_{\A}^{s,*}(I_F,\F)$ for $s\geq1$ is the positive-stem part in Adams filtration at least $2$.  The cokernel of $\partial_{CI}$ is the zero-stem tower
\[
\F[h_0]\subset\Ext_{\A}^{*,*}(C_F,\F)=\Ext_{\A}^{*,*}(\F,\F).
\]
Equivalently, this is the exact sequence described in \cite[Lemma 5.1]{Bruner2026}.
\end{lemma}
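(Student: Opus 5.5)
The plan is to read everything off the long exact Ext sequence of
\[
0\to I_F\to\A/\A\Sq^1\xrightarrow{\ \pi\ }C_F\to0 ,
\]
after replacing $C_F$ by the trivial module $\F$ (Lemma~\ref{lem:CF}). The middle term has
\[
\Ext_{\A}^{*,*}(\A/\A\Sq^1,\F)\cong\F[h_0]
\]
by Lemma~\ref{lem:HZ-ext} (computed by the primary $\Sq^1$ resolution). This is concentrated in stem $t-s=0$, one class $h_0^s$ in each bidegree $(s,s)$. The relevant portion of the sequence is
\[
\Ext_{\A}^{s,t}(\F,\F)\xrightarrow{\pi^*}\Ext_{\A}^{s,t}(\A/\A\Sq^1,\F)\to\Ext_{\A}^{s,t}(I_F,\F)\xrightarrow{\partial_{CI}}\Ext_{\A}^{s+1,t}(\F,\F)\xrightarrow{\pi^*}\Ext_{\A}^{s+1,t}(\A/\A\Sq^1,\F).
\]

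The key step is to show that $\pi^*$ is surjective, mapping $h_0^s\mapsto h_0^s$.

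\begin{itemize}
\item First, $\pi^*$ is a map of left $\Ext_{\A}(\F,\F)$-modules under Yoneda product.
\item It sends the unit $1\in\Ext^{0,0}(\F,\F)$ to the generator of $\Ext^{0,0}(\A/\A\Sq^1,\F)=\Hom_{\A}(\A/\A\Sq^1,\F)$, since $\pi$ is the augmentation.
\item Hence $\pi^*(h_0^s)=h_0^s\cdot 1$.
\item It remains to see that $h_0$ acts injectively on $\F[h_0]=\Ext(\A/\A\Sq^1,\F)$. I would check this directly on the $\Sq^1$ resolution: the chain map lifting $h_0$ shifts the tower $\Sigma^q\A\,q_q$ by one, so $h_0^s$ is nonzero in every degree.
\end{itemize}

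This verification, that the $h_0$-tower of $\Ext(\F,\F)$ restricts isomorphically onto the tower for $\A/\A\Sq^1$, is the step I expect to need the most care. Everything else is formal.

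Given surjectivity, exactness kills the map $\Ext^{s}(\A/\A\Sq^1)\to\Ext^{s}(I_F)$, so $\partial_{CI}$ is injective on all of $\Ext_{\A}^{*,*}(I_F,\F)$, in particular on its positive-stem part. The image of $\partial_{CI}$ is $\Ker(\pi^*)$ in filtration $s+1\ge1$. Since $\pi^*$ is zero on classes of positive stem (the target vanishes there) and an isomorphism in stem $0$ (where $\Ext(\F,\F)$ is exactly the $h_0$-tower), this kernel is precisely the positive-stem part of $\Ext^{\ge1}_{\A}(\F,\F)$.

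Splitting by filtration then gives the refined statements.

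\begin{itemize}
\item For $s=0$, the image is the positive-stem part of $\Ext^{1,*}(\F,\F)$. By Adams' computation of $\Ext^1$ this is $\bigoplus_{j>0}\F\{h_j\}$.
\item For $s\ge1$, the image is the positive-stem part in filtration at least $2$.
\item The cokernel of $\partial_{CI}$ is therefore the stem-zero part of $\Ext_{\A}(\F,\F)$, including filtration $0$. This is $\F[h_0]$.
\end{itemize}

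Finally, I would remark that this is the same exact sequence as \cite[Lemma~5.1]{Bruner2026}, since both arise from the identical short exact sequence.
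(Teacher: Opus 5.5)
Your proposal is correct and follows essentially the same route as the paper. Both arguments use the long exact Ext sequence for $0\to I_F\to\A/\A\Sq^1\to\F\to0$, identify $\pi^*$ as the projection of $\Ext_{\A}(\F,\F)$ onto the zero-stem $h_0$-tower, and then do the filtration bookkeeping. The paper simply asserts that $\pi^*$ is this projection, whereas you justify its surjectivity through the $\Ext_{\A}(\F,\F)$-module structure and the $h_0$-action on the $\Sq^1$ resolution, which also gives injectivity of $\partial_{CI}$ on all of $\Ext_{\A}(I_F,\F)$.
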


\begin{proof}
By Lemma~\ref{lem:CF}, $C_F\cong\F$.  The middle term is $B=\A/\A\Sq^1$, and the ordinary Ext group of this module is
\[
\Ext_{\A}^{*,*}(B,\F)\cong\F[h_0].
\]
The long exact Ext sequence for
\[
0\to I_F\to B\to\F\to0
\]
contains the natural map
\[
\Ext_{\A}^{*,*}(\F,\F)\longrightarrow\Ext_{\A}^{*,*}(B,\F).
\]
This map is the projection onto the zero-stem $h_0$-tower.  Hence the image of $\partial_{CI}$ is exactly the kernel of this projection, namely the positive-stem part of $\Ext_{\A}^{*,*}(\F,\F)$, and the cokernel is $\F[h_0]$.

In Adams filtration $1$, the positive-stem classes are the usual indecomposable classes $h_j$ for $j>0$.  They are precisely the image of $\Ext_{\A}^{0,*}(I_F,\F)$ under $\partial_{CI}$.  In filtrations at least $2$, the same exact sequence identifies the image of $\Ext_{\A}^{s,*}(I_F,\F)$, $s\geq1$, with the positive-stem part in filtration $s+1$.  This proves the refined filtration statement.
\end{proof}

The secondary differential associated with the mapping fiber is the two-extension composite with the suspension visible:
\begin{equation}\label{eq:DF-shift}
D_F^{s,t}:
\Ext_{\A}^{s,t}(\Sigma^{-1}K_F,\F)
=
\Ext_{\A}^{s,t+1}(K_F,\F)
\xrightarrow{\partial_{IK}}
\Ext_{\A}^{s+1,t+1}(I_F,\F)
\xrightarrow{\partial_{CI}}
\Ext_{\A}^{s+2,t+1}(C_F,\F).
\end{equation}

\begin{proposition}\label{prop:DF-kernel-cokernel}
The kernel and cokernel of $D_F$ are
\[
\Ker(D_F)
\cong
\bigoplus_{\substack{i>0\\ i\text{ not a power of }2}}\Sigma^{0,2i-1}\F
\]
and
\[
\Coker(D_F)
\cong
\F[h_0]\oplus\bigoplus_{j>0}\Sigma^{1,2^j}\F.
\]
\end{proposition}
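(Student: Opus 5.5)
The plan is to compute $\Ker(D_F)$ and $\Coker(D_F)$ by composing the two connecting maps in \eqref{eq:DF-shift}, using the filtration-by-filtration descriptions in Lemma~\ref{lem:IK-kernel-revised} and Lemma~\ref{lem:CI-cokernel-revised}. Since $D_F=\partial_{CI}\partial_{IK}$, the kernel is $\Ker(\partial_{IK})$ together with the preimage under $\partial_{IK}$ of $\Ker(\partial_{CI})$. The image of $D_F$ is $\partial_{CI}$ applied to the image of $\partial_{IK}$. I will treat source filtration $s=0$ and $s\geq1$ separately, because $\partial_{IK}$ is an isomorphism for $s>0$ and only surjective onto $\Ext^{1}(I_F,\F)$ for $s=0$.

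\emph{Step 1: the kernel.} By Lemma~\ref{lem:CI-cokernel-revised}, $\partial_{CI}$ is injective on the positive-stem part of $\Ext_{\A}(I_F,\F)$. First I would check that $\Ext_{\A}(I_F,\F)$ has no zero-stem part in positive filtration. The long exact sequence for $0\to I_F\to\A/\A\Sq^1\to\F\to0$ maps $\Ext(\F,\F)\to\Ext(\A/\A\Sq^1,\F)=\F[h_0]$, and this map is the surjective projection onto the tower. So $\Ext^{s}(I_F,\F)$ is identified with the positive-stem part of $\Ext^{s+1}(\F,\F)$, which means $\partial_{CI}$ is injective on all of $\Ext(I_F,\F)$. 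Hence $\Ker(D_F)=\Ker(\partial_{IK})$. Lemma~\ref{lem:IK-kernel-revised} gives this as $\bigoplus\Sigma^{0,2i}\F$ over nonpowers $i$, computed in $\Ext(K_F,\F)$. Reindexing through $\Ext^{s,t}(\Sigma^{-1}K_F,\F)=\Ext^{s,t+1}(K_F,\F)$ shifts the internal degree to $2i-1$. This gives the stated kernel, with generators dual to the triangular $r_{2i}$ from Theorem~\ref{thm:complete-Hm}.

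\emph{Step 2: the cokernel.} The image of $\partial_{IK}$ is all of $\Ext^{s+1}(I_F,\F)$ for every $s\geq0$: it is surjective in filtration $0$ by the exact sequence displayed in the proof of Lemma~\ref{lem:IK-kernel-revised}, and an isomorphism for $s>0$. Therefore $\Im(D_F)=\partial_{CI}(\Ext^{\geq1}(I_F,\F))$. By Lemma~\ref{lem:CI-cokernel-revised}, this is the positive-stem part of $\Ext_{\A}(\F,\F)$ in Adams filtration at least $2$. The target $\Ext_{\A}(C_F,\F)=\Ext_{\A}(\F,\F)$ by Lemma~\ref{lem:CF}. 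Removing this image leaves the filtration-zero class, the whole $h_0$-tower (zero stem), and the positive-stem filtration-one classes $h_j$, $j>0$, in bidegree $(1,2^j)$. Together these assemble to $\F[h_0]\oplus\bigoplus_{j>0}\Sigma^{1,2^j}\F$. I will also note that $\Ext^{0}(I_F,\F)$ is not in the image of $\partial_{IK}$, since $\partial_{IK}$ raises filtration. This is why the $h_j$ survive even though they lie in $\Im\partial_{CI}$. Their bidegrees match Proposition~\ref{prop:powers}: the generators of $I_F$ in filtration zero are dual to the indecomposable columns $\Sq^{2^j}$.

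\emph{Main obstacle.} The delicate point is the bookkeeping in Step 1 showing that $\partial_{CI}$ kills nothing coming from $\Ext^{\geq1}(I_F,\F)$. This requires that the $h_0$-tower contributes nothing to $\Ext(I_F)$, that is, that the map $\Ext(\F,\F)\to\F[h_0]$ is surjective in every filtration. I would justify this by noting that $h_0^k$ in $\Ext(\F,\F)$ restricts to $h_0^k$ via the standard $\Sq^1$ resolution, as in Lemma~\ref{lem:HZ-ext}. The rest is a matter of tracking the suspension shift consistently.
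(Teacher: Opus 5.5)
Your proposal is correct and follows the same route as the paper. You get $\Ker(D_F)=\Ker(\partial_{IK})$ from injectivity of $\partial_{CI}$, then use Lemma~\ref{lem:IK-kernel-revised} with the suspension shift, and you get $\im(D_F)=\partial_{CI}\bigl(\Ext_{\A}^{\geq1}(I_F,\F)\bigr)$ from surjectivity of $\partial_{IK}$. Your explicit check that $\Ext_{\A}(\F,\F)\to\F[h_0]$ is surjective in every filtration makes $\partial_{CI}$ injective on all of $\Ext_{\A}(I_F,\F)$, including the filtration-one inputs coming from $s=0$, which sharpens a step the paper states somewhat loosely.
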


\begin{proof}
By Lemma~\ref{lem:CI-cokernel-revised}, the map $\partial_{CI}$ is injective on the positive-stem part in Adams filtration at least $2$, and this is the part reached after applying $\partial_{IK}$ to a class of positive Adams filtration in $\Ext(K_F)$.  Hence the kernel of $D_F$ is the suspended kernel of $\partial_{IK}$.  Lemma~\ref{lem:IK-kernel-revised} gives one class in $\Ext_{\A}^{0,2i}(K_F,\F)$ for each $i$ that is not a power of $2$.  Since
\[
\Ext_{\A}^{0,2i-1}(\Sigma^{-1}K_F,\F)=\Ext_{\A}^{0,2i}(K_F,\F),
\]
these classes appear in the domain of $D_F$ in bidegree $(0,2i-1)$.

We now compute the cokernel in a filtration-sensitive way.  The two-extension differential
\[
D_F=\partial_{CI}\partial_{IK}
\]
can only see the values of $\partial_{CI}$ on $\Ext_{\A}^{s,*}(I_F,\F)$ with $s\geq1$, because $\partial_{IK}$ raises Adams filtration.  Since $M_F$ is free, the long exact sequence for
\[
0\to K_F\to M_F\to I_F\to0
\]
shows that $\partial_{IK}$ is surjective onto $\Ext_{\A}^{s,*}(I_F,\F)$ for every $s\geq1$.  Therefore the image of $D_F$ is exactly the image under $\partial_{CI}$ of $\Ext_{\A}^{\geq1,*}(I_F,\F)$.  By Lemma~\ref{lem:CI-cokernel-revised}, this image is the positive-stem part of $\Ext_{\A}^{*,*}(\F,\F)$ in Adams filtration at least $2$.

The classes not reached are consequently of two kinds.  First, the zero-stem tower $\F[h_0]$ is outside the image of $\partial_{CI}$.  Second, the filtration-one positive-stem classes
\[
\partial_{CI}\Ext_{\A}^{0,*}(I_F,\F)=\bigoplus_{j>0}\F\{h_j\}
\]
are also outside the image of $D_F$, because there is no Adams-filtration $-1$ group from which $\partial_{IK}$ could hit $\Ext_{\A}^{0,*}(I_F,\F)$.  These are exactly the summands $\Sigma^{1,2^j}\F$ for $j>0$.  Hence
\[
\Coker(D_F)\cong \F[h_0]\oplus\bigoplus_{j>0}\Sigma^{1,2^j}\F.
\]

The additional Adem syzygies are accounted for by the injective part of the same exact sequence.  For example, the dual of $\rho_4=\Sq^2e_2$ lies in $\Ext_{\A}^{0,4}(K_F,\F)$ and hence in bidegree $(0,3)$ after the suspension.  It is not in $\Ker(\partial_{IK})$, so its image under $\partial_{IK}$ is nonzero in $\Ext_{\A}^{1,4}(I_F,\F)$.  Lemma~\ref{lem:CI-cokernel-revised} is injective on this positive-stem input of Adams filtration $1$, so $D_F(\rho_4^\vee)$ is nonzero in $\Ext_{\A}^{2,4}(\F,\F)$; this group is generated by $h_1^2$.  Thus the syzygy $z_4=\Sq^2e_2+\Sq^3e_1$ is present in the secondary chain complex but does not contribute to the final kernel of $D_F$.
\end{proof}

\begin{theorem}\label{thm:F}
For the infinite fiber $F$, corresponding to Bruner's main fiber calculation \cite[Theorem~5.4]{Bruner2026},
\[
\sExt^{*,*}(\HB F,\F)
\cong
\F[h_0]
\oplus
\bigoplus_{j>0}\Sigma^{1,2^j}\F
\oplus
\bigoplus_{\substack{i>0\\ i\text{ not a power of }2}}\Sigma^{0,2i-1}\F.
\]
\end{theorem}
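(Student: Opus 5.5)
The proof will follow the same pattern as Theorems~\ref{thm:Fn} and~\ref{thm:FnZ}, with the infinite-fiber data of Remark~\ref{rem:first-matrices} in place of a single square. First, Theorem~\ref{thm:augmentation-resolution} is applied to the fiber sequence $F\to\HZ\to\prod_{i>0}\Sigma^{2i}H$. The first row \eqref{eq:low-degree-row} is taken together with the integral $\Sq^1$ tower carrying track $H_{1,1}$, the triangular generators $s_m$ with tracks $H_m$ from Theorem~\ref{thm:complete-Hm}, and the additional Adem-kernel generators $a_\rho$ with tracks from Proposition~\ref{prop:coherence}. By Theorem~\ref{thm:recursive-completion}, this augmented partial secondary complex extends to a secondary free $\B$-resolution $\mathcal R_F$ of $\HB F$. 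Its primary shadow is filtered by the three short exact sequences built from $K_F$, $I_F$ and $C_F$; Lemma~\ref{lem:CF} identifies $C_F$ with $\F$.

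Next, Lemma~\ref{lem:mapping-fiber-secondary-ext} is invoked for $\mathcal R_F$. After applying $\Hom_{\B}(-,\F)$, the first secondary cochain object is $\Ext_{\A}(\F,\F)\oplus\Ext_{\A}(\Sigma^{-1}K_F,\F)$. Its only differential component is $D_F=\partial_{CI}\partial_{IK}$, exactly as in \eqref{eq:DF-shift}. Hence $\sExt^{*,*}(\HB F,\F)\cong\Coker(D_F)\oplus\Ker(D_F)$, and Proposition~\ref{prop:DF-kernel-cokernel} computes both pieces: the cokernel is $\F[h_0]\oplus\bigoplus_{j>0}\Sigma^{1,2^j}\F$, and the kernel is $\bigoplus_{i\text{ not a power of }2}\Sigma^{0,2i-1}\F$. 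Adding these gives the displayed formula.

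The main obstacle is checking that the bidegree bookkeeping and the contribution of the integral column agree between the two ingredients. Lemma~\ref{lem:mapping-fiber-secondary-ext} is stated for a general $Y$, but here $Y=\HZ$ and the resolution contains the extra $e_1$ column. I must therefore confirm that the $h_0$-tower is counted exactly once: it should appear as the cokernel of $\partial_{CI}$ in Lemma~\ref{lem:CI-cokernel-revised}, matching Lemma~\ref{lem:HZ-ext}, and should not appear again among the classes coming from $K_F$. The check is that the total-row lifts $\widetilde\rho=\rho+b_\rho e_1$ only change the $e_1$-component by elements of the $\Sq^1$ tower, which is already exact in positive degrees. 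So the $K$-summand of the cochain model is governed by $K_F$ alone, as in the proof of Theorem~\ref{thm:complete-Hm}.

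Finally, I would check in low degrees that the Adem-kernel classes do not survive. The class $\rho_4^\vee$ should map to $h_1^2$, as shown at the end of the proof of Proposition~\ref{prop:DF-kernel-cokernel}. The triangular class $r_6^\vee$ should give the survivor in bidegree $(0,5)$, consistent with the table following Lemma~\ref{lem:mapping-fiber-secondary-ext}.
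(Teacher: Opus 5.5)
Your proposal is correct and follows the paper's proof essentially step for step: Theorem~\ref{thm:augmentation-resolution} with the recursive completion of Theorem~\ref{thm:recursive-completion} gives the resolution, Lemma~\ref{lem:mapping-fiber-secondary-ext} reduces the secondary Ext to $\Coker(D_F)\oplus\Ker(D_F)$, and Proposition~\ref{prop:DF-kernel-cokernel} supplies both pieces, with Adem-kernel classes such as $\rho_4^\vee$ killed by the injective part of $D_F$. Your extra bookkeeping for the integral column (the ambiguity in $b_\rho$ is a boundary in the $\Sq^1$ tower, so the $h_0$-tower enters only once, through $\Coker(\partial_{CI})$ as in Lemma~\ref{lem:CI-cokernel-revised}) is not spelled out in the paper's proof of this theorem, but it agrees with the remark in the proof of Theorem~\ref{thm:complete-Hm}.
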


\begin{proof}
By Theorem~\ref{thm:augmentation-resolution}, the secondary mapping-fiber data extend to a secondary free resolution of $\HB F$ with primary exact data $K_F$, $I_F$, and $C_F$, and Lemma~\ref{lem:mapping-fiber-secondary-ext} identifies its first secondary differential with $D_F$.  Proposition~\ref{prop:DF-kernel-cokernel} computes the kernel and cokernel of that differential with all suspension shifts displayed.  The kernel contributes the filtration-zero summands indexed by nonpowers, and the cokernel contributes the $h_0$-tower and the filtration-one classes indexed by powers of $2$.  The additional primary Adem syzygies are present in the complex and are killed by the injective part of $D_F$.  These are all the summands in the secondary Ext group.
\end{proof}

For the product target in the definition of $F$, the degree-$d$ cohomology is the finite direct sum
\[
\bigoplus_{0<2i\leq d}\A^{d-2i}.
\]
Hence all primary modules entering the construction are bounded below and locally finite in the sense required for the ordinary Adams spectral sequence and the Baues--Jibladze comparison theorem.  The comparison with the $E_3$-term therefore applies degreewise to $F$.

\begin{corollary}
Under the Baues--Jibladze comparison theorem \cite[Theorem~7.3]{BauesJibladze2006}, the answer agrees with Bruner's $E_3$-term for the ordinary mod-$2$ Adams spectral sequence for $F$ \cite[Theorem~5.4]{Bruner2026}.  Thus there is one class in each positive odd stem; it is in Adams filtration $1$ in stems $2^j-1$ and in Adams filtration $0$ in stems $2i-1$ when $i$ is not a power of $2$.
\end{corollary}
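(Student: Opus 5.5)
The corollary should follow from Theorem~\ref{thm:F} together with the Baues--Jibladze comparison theorem; the remaining work is to read off stems and filtrations. First I will check that the comparison theorem \cite[Theorem~7.3]{BauesJibladze2006} applies to $F$. The remark preceding the corollary shows that $H^*$ of the product target is a finite direct sum $\bigoplus_{0<2i\le d}\A^{d-2i}$ in each degree $d$. Hence $H^*F$ is bounded below and of finite type, since it sits in the short exact sequence $0\to C_F\to H^*F\to\Sigma^{-1}K_F\to0$ and $C_F=\F$ by Lemma~\ref{lem:CF}. Theorem~\ref{thm:augmentation-resolution} provides a secondary free resolution of $\HB F$, so the comparison theorem identifies $\sExt^{s,t}(\HB F,\F)$ with $E_3^{s,t}$ of the mod-$2$ Adams spectral sequence for $F$.

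Next I will substitute the answer of Theorem~\ref{thm:F} and convert bidegrees $(s,t)$ to (stem, filtration) $=(t-s,s)$:
\begin{itemize}
\item the summand $\F[h_0]$ gives one class in stem $0$ in each filtration $s\ge0$, which is the integral class;
\item each summand $\Sigma^{1,2^j}\F$ sits in bidegree $(1,2^j)$, hence in stem $2^j-1$ and filtration $1$;
\item each summand $\Sigma^{0,2i-1}\F$ with $i$ not a power of $2$ sits in stem $2i-1$ and filtration $0$.
\end{itemize}
Every positive integer $i$ is either a power of $2$ or not, and the stems $2^j-1$ ($j>0$) together with the stems $2i-1$ for nonpowers $i$ enumerate all positive odd stems exactly once. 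So there is exactly one class in each positive odd stem, with the filtrations stated.

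Finally I will compare with \cite[Theorem~5.4]{Bruner2026}: Bruner's $E_3=E_\infty$ term has the same description. This also matches $\pi_*F$, namely $\Z$ in degree $0$ and $\Z/2$ in each positive odd degree, which is consistent with collapse at $E_3$. The only point needing care is the bookkeeping of the suspension shift $\Sigma^{-1}K_F$, specifically that a class in $\Ext^{0,2i}(K_F)$ lands in stem $2i-1$. That shift was already made explicit in Proposition~\ref{prop:DF-kernel-cokernel}, so no real obstacle remains beyond the finiteness hypotheses for the comparison theorem.
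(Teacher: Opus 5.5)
Your proposal is correct and follows the paper's own argument. Both combine Theorem~\ref{thm:F} with the Baues--Jibladze comparison theorem and convert the bidegrees $(1,2^j)$ and $(0,2i-1)$ to stems $2^j-1$ and $2i-1$; your finiteness check is the paragraph the paper places just before the corollary, and your check that $i=2^{j-1}$ and the nonpowers $i$ together cover each positive odd stem exactly once is bookkeeping the paper leaves implicit.
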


\begin{proof}
Baues--Jibladze identify secondary Ext over the secondary Steenrod algebra with the $E_3$-term of the Adams spectral sequence \cite[Theorem~7.3]{BauesJibladze2006}.  The bidegree $(1,2^j)$ has stem $2^j-1$, and the bidegree $(0,2i-1)$ has stem $2i-1$.  The theorem gives precisely the stated filtration distribution.
\end{proof}

\section{Identification with the Bruner--Rognes differential}\label{sec:BR-identification}

The previous sections computed the secondary Ext groups by using the completed secondary mapping-fiber resolutions and thereby gave the secondary calculation corresponding to Bruner's open Question~6.1 \cite{Bruner2026} for $F_n$, $F_{n\Z}$, and $F$.  We now address Bruner's open Question~6.2 \cite{Bruner2026} for these fibers by identifying the Bruner--Rognes two-extension formula \cite[Theorem~1.1]{BrunerRognes2022}, equivalently Bruner's Theorem~2.1 \cite{Bruner2026} in this setting, with the primary shadow of the first differential in the completed secondary mapping-fiber resolutions.  The Bruner--Rognes theorem gives a component of the ordinary Adams $d_2$; the recursive construction above supplies the secondary chain complex with matrices, tracks, and coherence before the comparison is made.

Let
\[
X\longrightarrow Y\xrightarrow{f}Z
\]
be a fiber sequence, and let
\[
K=\Ker(f^*:H^*Z\to H^*Y),
\qquad
I=\im(f^*),
\qquad
C=\Coker(f^*).
\]
The long exact cohomology sequence factors into
\[
0\to K\to H^*Z\to I\to0,
\qquad
0\to I\to H^*Y\to C\to0,
\qquad
0\to C\to H^*X\to\Sigma^{-1}K\to0.
\]
The first two sequences define a two-extension
\begin{equation}\label{eq:two-extension-general}
0\to K\to H^*Z\to H^*Y\to C\to0.
\end{equation}

\begin{theorem}\label{thm:secondary-BR}
For the completed augmented secondary mapping-fiber resolution produced by Theorems~\ref{thm:recursive-completion} and~\ref{thm:augmentation-resolution} from the three short exact sequences displayed above, restrict the first secondary differential to the summand of the primary $E_2$-description represented by
\[
\Ext_{\A}^{s,t}(\Sigma^{-1}K,\F)
\]
and project its target to the summand represented by
\[
\Ext_{\A}^{s+2,t+1}(C,\F).
\]
The primary shadow of this restricted and projected differential is the composite
\[
\Ext_{\A}^{s,t}(\Sigma^{-1}K,\F)
=
\Ext_{\A}^{s,t+1}(K,\F)
\xrightarrow{\partial_{IK}}
\Ext_{\A}^{s+1,t+1}(I,\F)
\xrightarrow{\partial_{CI}}
\Ext_{\A}^{s+2,t+1}(C,\F).
\]
Equivalently, this component is Yoneda composition by the two-extension \eqref{eq:two-extension-general}.  Under Bruner--Rognes \cite[Theorem~1.1]{BrunerRognes2022}, equivalently Bruner's restatement for these fibers \cite[Theorem~2.1]{Bruner2026}, the same two-extension composite is the component $i^*d_2q^*$ of the ordinary Adams differential, up to the sign convention attached to the cofiber sequence.  At the prime $2$ this sign is immaterial.
\end{theorem}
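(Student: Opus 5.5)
The plan is to read the identification off the construction of the completed resolution, using Lemma~\ref{lem:mapping-fiber-secondary-ext} as the main input, and then to add the standard Yoneda description of connecting homomorphisms and the Bruner--Rognes theorem.

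First, I will fix the completed augmented resolution $\mathcal R_X$ from Theorems~\ref{thm:recursive-completion} and~\ref{thm:augmentation-resolution}. Its primary shadow is an $\A$-free resolution of $H^*X$. This resolution is filtered compatibly with $0\to C\to H^*X\to\Sigma^{-1}K\to0$: the $C$-part is resolved by the row coming from $H^*Y$ modulo $I$, and the $\Sigma^{-1}K$-part by the syzygy generators (the $a_\rho$, $s_m$ and their recursive descendants). Because that extension need not split as $\A$-modules, applying $\Hom_{\B}(-,\F)$ yields a two-step filtered cochain object. Its associated graded is $\Ext_{\A}(C,\F)\oplus\Ext_{\A}(\Sigma^{-1}K,\F)$, as in Lemma~\ref{lem:mapping-fiber-secondary-ext}. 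The first secondary differential restricted to the $\Sigma^{-1}K$ summand and projected to the $C$ summand is then exactly the component in the statement. Lemma~\ref{lem:mapping-fiber-secondary-ext} already identifies its primary shadow with $D=\partial_{CI}\partial_{IK}$. The real content is to check that the tracks $\Gamma_1(a_\rho)=H_\rho$ produce precisely these two connecting maps, with the correct bidegree shift $(s,t)\mapsto(s+2,t+1)$.

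To verify this, I will go through a generator $a_\rho$ with $d_2(a_\rho)=\widetilde\rho$. The primary part $\rho\in K$ maps to zero in $H^*Z\to I$; lifting it to the free module $H^*Z$ (the column $e_\lambda$) is the step defining $\partial_{IK}$. The image $d_1(\widetilde\rho)$ lies in $I\subset H^*Y$, and the null track $H_\rho$ of Proposition~\ref{prop:coherence} witnesses that it is zero after further lifting; this is the step defining $\partial_{CI}$. Together these give the standard zig-zag description of a composite of connecting homomorphisms. A connecting homomorphism for $0\to A\to B\to Q\to0$ equals Yoneda product with the class of that extension in $\Ext^1$. The composite of two such maps is therefore Yoneda product with the spliced class, which is the class of the two-extension \eqref{eq:two-extension-general} in $\Ext^2_{\A}(C,K)$, up to the suspension convention $\Ext(\Sigma^{-1}K)=\Ext(K)$ with $t$ shifted by one.

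Finally, I will quote \cite[Theorem~1.1]{BrunerRognes2022}, in the form \cite[Theorem~2.1]{Bruner2026}: the component $i^*d_2q^*$ equals Yoneda product with the same two-extension, up to a sign. Since the coefficients are $\F_2$, the sign disappears and the two maps coincide.

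The main obstacle is the middle step, showing that the secondary differential is independent of the choices made in the recursive completion. Different choices of $b_\rho^{\mathrm{can}}$, of $\mu_0$ corrections, and of higher cycles could a priori change the component. My first approach is to note that changes of $b_\rho$ alter lifts only by boundaries in the $\Sq^1$ tower, as in the proof of Theorem~\ref{thm:complete-Hm}, and that changes of tracks alter them by elements of $\A$ in $D_1$. On $\Hom_{\B}(-,\F)$ these contribute only filtration-raising terms. Consequently they do not affect the primary shadow of the $(\Sigma^{-1}K\to C)$ component, because Yoneda classes depend only on the extension class.
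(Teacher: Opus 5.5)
Your proposal follows essentially the same route as the paper: take the identification $D=\partial_{CI}\partial_{IK}$ from Lemma~\ref{lem:mapping-fiber-secondary-ext}, rewrite the composite of connecting maps as Yoneda product with the spliced two-extension, and quote Bruner--Rognes, with the sign irrelevant at $p=2$. The paper uses neither your track-level zigzag (which attributes $\partial_{CI}$ to the null tracks $H_\rho$, although $\partial_{IK}$ and $\partial_{CI}$ are defined from the primary short exact sequences alone, and primarily $d_1(\widetilde\rho)=0$) nor your filtration argument for independence of choices; it gets independence simply from the fact that the completed complex is a secondary free resolution of the same augmented secondary module.
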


\begin{proof}
Let
\[
e_{IK}\in\Ext_{\A}^{1,0}(I,K),
\qquad
e_{CI}\in\Ext_{\A}^{1,0}(C,I)
\]
be the extension classes represented by
\[
0\to K\to H^*Z\to I\to0
\qquad\text{and}\qquad
0\to I\to H^*Y\to C\to0.
\]
Their Yoneda product is the two-extension class
\[
e_{IK}e_{CI}\in\Ext_{\A}^{2,0}(C,K)
\]
represented by
\[
0\to K\to H^*Z\to H^*Y\to C\to0.
\]
For a class
\[
\alpha\in\Ext_{\A}^{s,t}(\Sigma^{-1}K,\F)
=
\Ext_{\A}^{s,t+1}(K,\F),
\]
the primary shadow of the first secondary differential in the completed mapping-fiber resolution is
\[
\alpha\cdot e_{IK}\cdot e_{CI}
=
\partial_{CI}\partial_{IK}(\alpha)
\in
\Ext_{\A}^{s+2,t+1}(C,\F).
\]
This equality is the standard identification of connecting homomorphisms with Yoneda multiplication by the corresponding short exact sequence classes.  The short exact sequence
\[
0\to C\to H^*X\to\Sigma^{-1}K\to0
\]
only supplies the maps $q^*$ and $i^*$ selecting the relevant component of the Adams $E_2$-term of $X$.  Bruner--Rognes prove that this selected component $i^*d_2q^*$ is precisely Yoneda multiplication by the same two-extension.  Since the completed Baues--Jibladze resolution is a secondary free resolution of the same augmented secondary module, the primary shadow of its first secondary differential agrees with the Bruner--Rognes component, up to the cofiber-sequence sign; at the prime $2$ the sign is immaterial.
\end{proof}

\begin{remark}
Theorem \ref{thm:secondary-BR} does not assert that the Bruner--Rognes component \cite{BrunerRognes2022} alone determines an entire $E_3$-term for an arbitrary spectrum.  In the three fiber calculations above, the explicitly displayed matrices and the full syzygy data account for all primary generators relevant to the first secondary differential: powers of $2$ give the filtration-one cokernel classes, triangular nonpowers give the filtration-zero kernel classes, and additional Adem syzygies such as $z_4$ are killed by the injective part of the two-extension differential.  In more complicated examples, such as the image-of-$J$ calculation motivating Bruner--Rognes, the same two-extension component can be only part of the full secondary Ext calculation; compare the additional differential information in \cite[Theorem~1.2]{BrunerRognes2022}.
\end{remark}

\appendix

\section{Computer-aided verification}\label{sec:computer-verification}

This appendix records the finite verification used for the low-degree secondary Adem tracks in Section~\ref{sec:explicit-fibers}.  The computation is not used as a substitute for the general proof of termination and boundary correctness: that proof is Construction~\ref{cons:tracked-reduction} together with the boundary identity \eqref{eq:track-reduction-boundary}.  The role of the computation is to make the first primary syzygies and their tracks reproducible in the admissible basis.

The accompanying source file \texttt{secondary\_adem\_tracks.py} contains the verification routine used for this appendix (see link Zenodo: \url{https://doi.org/10.5281/zenodo.20733624}).  Running
\begin{center}
\texttt{python3 secondary\_adem\_tracks.py}
\end{center}
prints the low-degree syzygy data, checks that the admissible normal form of each displayed row relation is zero, verifies the formal mod-$2$ boundary of each listed secondary Adem track, records the $\mu_0$ cancellation terms needed for the pair-algebra boundaries of $H_5$ and $H_8$, computes the kernel of the row matrix modulo positive-degree multiples of earlier syzygies through total degree~$8$, solves the finite triangular systems in \(\A/\A\Sq^1\) for \(m=2,4,6,8\), and checks two further nonpower examples in degrees \(10\) and \(12\).  The script uses exact arithmetic over \(\F\) for the primary Steenrod reductions and integer coefficient counts for the duplicated $D_0$-monomials that are represented by $\mu_0$ cancellation tracks.  Its output is a reproducible certificate for the low-degree computations recorded here, while the all-degree termination and corrected pair-algebra boundary statements are Theorem~\ref{thm:tracked-reduction}, and the all-degree secondary coherence is the recursive Baues--Jibladze completion of Theorem~\ref{thm:recursive-completion}.

The verification represents Steenrod words by tuples, reduces inadmissible pairs by the mod-$2$ Adem relation \eqref{eq:adem-relation}, and simultaneously records a formal track word $\alpha u[a,b]\beta$ for every Adem replacement.  It then forms the row
\[
d_1=\begin{bmatrix}
\Sq^1&\Sq^2&\Sq^4&\Sq^6&\Sq^8
\end{bmatrix}
\]
with columns $e_1,e_2,e_4,e_6,e_8$.  After quotienting the kernel by positive-degree multiples of earlier syzygies, the first indecomposable syzygies are as follows.

\begin{center}
\begin{tabular}{c|c|c|c}
 degree & status & total-row syzygy & secondary track \\
\hline
$2$ & integral & $q_2=\Sq^1e_1$ & $u[1,1]$ \\
$4$ & Adem-kernel & $z_4=\Sq^2e_2+\Sq^3e_1$ & $u[2,2]$ \\
$5$ & Adem-kernel & $z_5=\Sq^2\Sq^1e_2+\Sq^1e_4+\Sq^4e_1$ & $H_5$ \\
$6$ & triangular nonpower & $\widetilde r_6=e_6+\Sq^2e_4+\Sq^5e_1$ & $u[2,4]$ \\
$8$ & Adem-kernel & $z_8=\Sq^6e_2+\Sq^4e_4+\Sq^2e_6$ & $H_8$
\end{tabular}
\end{center}

Here
\[
H_5=\Sq^2u[1,2]+u[2,3]+u[1,4]
+\mu_0(\Sq^2\Sq^3)+\mu_0(\Sq^5)
\]
and
\[
H_8=u[4,4]+u[2,6]+\mu_0(\Sq^7\Sq^1).
\]
The last summands are precisely the pair-algebra cancellation tracks for the duplicate lifted terms appearing in the formal mod-$2$ boundary calculation.

For $m=6$ the computed triangular column in the ordered basis $(e_1,e_2,e_4,e_6,e_8)^T$ is
\[
\begin{bmatrix}
\Sq^5\\ 0\\ \Sq^2\\ 1\\ 0
\end{bmatrix}.
\]
Multiplication by $d_1$ gives the primary relation
\[
\Sq^6+\Sq^2\Sq^4+\Sq^5\Sq^1=0,
\]
and the recorded secondary track has boundary
\[
\partial u[2,4]=\Sq^6+\Sq^2\Sq^4+\Sq^5\Sq^1.
\]
Using the additional Adem relation $[1,4]=\Sq^1\Sq^4+\Sq^5$, the equivalent expanded form is
\[
\Sq^6+\Sq^2\Sq^4+\Sq^1\Sq^4\Sq^1=0,
\qquad
H_6^{\mathrm{exp}}=u[2,4]+u[1,4]\Sq^1+\mu_0(\Sq^5\Sq^1).
\]

The degree-four calculation is the essential low-degree check for the full secondary complex.  The even-column syzygy $\rho_4=\Sq^2e_2\in K_F$ lifts to
\[
z_4=\rho_4+\Sq^3e_1,
\]
and
\[
\partial u[2,2]=\Sq^2\Sq^2+\Sq^3\Sq^1=d_1(z_4).
\]
The coordinate functionals on $M_F=\bigoplus_{i>0}\Sigma^{2i}\A e_{2i}$ vanish on $\rho_4$ because the only nonzero coefficient is $\Sq^2$, whose augmentation is zero.  Thus $\rho_4^\vee$ is not in $\Ker(\partial_{IK})$, even though $z_4$ is indispensable in the secondary chain complex.

For powers $2$, $4$, and $8$, the triangular finite system \eqref{eq:finite-system-m} has no solution, in agreement with Proposition~\ref{prop:powers}.  For the nonpower $6$, it has the solution displayed above.  As a consistency check outside the printed low-degree syzygy table, the same exact reduction verifies
\[
\operatorname{Adm}(\Sq^{10}+\Sq^8\Sq^2+\Sq^4\Sq^2\Sq^4)
=\Sq^7\Sq^2\Sq^1+\Sq^9\Sq^1
\]
and
\[
\operatorname{Adm}(\Sq^{12}+\Sq^{10}\Sq^2+\Sq^4\Sq^8)=\Sq^{11}\Sq^1.
\]
Higher even nonpowers are treated degree by degree by the same finite algorithm; the extra Adem-kernel syzygies are included through the chosen homogeneous minimal generating set $\mathcal R_F$.

\medskip

\subsection*{Funding}
None.

\subsection*{Conflict of Interest}
The author declares that he has no conflict of interest. 

\subsection*{Data Availability}
The computer verification script and the associated low-degree track data used in Appendix~\ref{sec:computer-verification} are available at Zenodo \url{https://doi.org/10.5281/zenodo.20733624}.

\end{document}